\newtheorem{thm}{Theorem}[section]
\newtheorem{lm}{Lemma}
\newtheorem{pp}{Proposition}
\newtheorem*{claim}{Claim}
\theoremstyle{definition}
\newtheorem{df}{Definition}
\newtheorem{remark}{Remark}
\newtheorem{example}{Example}
\newcommand*\diff{\mathop{}\!\mathrm{d}}
\newcommand\numberthis{\addtocounter{equation}{1}\tag{\theequation}}
\DeclareMathOperator{\WF}{WF}
\DeclareMathOperator{\sgn}{sgn}
\DeclareMathOperator{\Tr}{Tr}
\begin{document}
\title[Rayleigh and Stoneley Waves in Linear Elasticity]{Rayleigh and Stoneley Waves in Linear Elasticity}
\author[Y. Zhang]{Yang Zhang}
\address{Purdue University \\ Department of Mathematics}
\email{zhan1891@purdue.edu}
\thanks{Partly supported by NSF Grant DMS-1600327}
\maketitle
\begin{abstract}
We construct the microlocal solutions of Rayleigh and Stoneley waves in linear elasticity with variable coefficients and a curved boundary (interface). We compute the direction of the polarization and show the retrograde elliptical motion of both of them.  
\end{abstract}

\section{Introduction}
Rayleigh waves in linear elasticity are a type of surface waves. 
They are first studied by Lord Rayleigh in \cite{Rayleigh1885} and can be the most destructive waves in an earthquake.
They propagate along a traction-free boundary and decay rapidly into the media.
By the geophysical literatures, Rayleigh waves have a retrograde elliptical particle motion for shallow depth in the case of flat boundary and homogeneous media, see \cite{1904a,KeiitiAki2002}.
Stoneley waves are a type of interface waves that propagate along the interface between two different solids. 
They are first predicted in \cite{1924a}. 
Roughly speaking, Rayleigh waves can be regarded as a special (limit) case of Stoneley waves.
Both geophysical and mathematical works have been done for these two kinds of waves, see \cite{Rayleigh1885,Scholte1947,Chadwick1994,Stefanov2000, Babich2004, Hansen2012,Kirpichnikova2008, 1904a,Taylor1979,Yamamoto1989,Hansen2011,Hansen2014,Chadwick1994, 1985a, Ting2014, Hoop2020, Stefanov1996, Stefanov1995} and their references. 
Most geophysical works on them are considering specific situations, for example, the case of flat boundaries, plane waves, or homogeneous media.
The propagation phenomenon
of Rayleigh waves in an isotropic elastic system is first studied by Michael Taylor in \cite{Taylor1979} from a microlocal analysis point of view. 
Kazuhiro Yamamoto in \cite{Yamamoto1989}  shows the existence of Stoneley waves as the propagation of singularities in two isotopic media with smooth arbitrary interfaces.
S\"{o}nke Hansen in \cite{Hansen2011} derives the Rayleigh quasimodes by the spectral factorization methods for inhomogeneous anisotropic media with curved boundary 
and then in \cite{Hansen2014} shows the existence of Rayleigh waves by giving ray series asymptotic expansions in the same setting. 
In particular, the author derives the transport equation satisfied by the leading amplitude which represents the term of highest frequency. 
In \cite{Hoop2017}, the authors develop a semiclassical analysis of elastic surface wave generated by interior (point) source and 
in \cite{Hoop2020} the inverse spectral problem of Rayleigh waves is studied. 
Most recently in \cite{Stefanov2019}, the authors describe the microlocal behavior of solutions to the transmission problems in isotropic elasticity with curved interfaces. 
Surface waves are briefly mentioned there as possible solutions of evanescent type which propagate on the boundary, see \cite[\S 8.2]{Stefanov2019}. 

In this work, we describe the microlocal behaviors of Rayleigh waves and Stoneley waves for an isotropic elastic system with variable coefficients and a curved boundary (interface).
We construct the microlocal solutions of these two waves 
and compute the direction of their polarization explicitly. 
The magnitude of the polarization depends on an amplitude constructed by the geometric optics, which satisfies a transport equation. 
We explain how to compute the zero order term involved in the transport equation in the appendix.
In particular, we show the retrograde elliptical particle motion in both waves as an analog to the flat case.
Essentially, the existence of Rayleigh waves comes from the nonempty kernel of the principal symbol of the Dirichlet-to-Neumann map (DN map) $\Lambda$ in the elliptic region. 
In Section \ref{section_pre}, we briefly state some relevant results in \cite{Stefanov2019} that we are going to use later.
In Section \ref{Rayleigh}, based on the analysis in  \cite{Stefanov2019}, 
one can see the Rayleigh waves are corresponding to the solution to $\Lambda u = l$ on the boundary, where $l$ is a source microlocally supported in the elliptic region.
Next, inspired by the diagonalization of the Neumann operator for the case of constant coefficients in \cite{Stefanov2000}, we diagonalize $\Lambda$ microlocally up to smoothing operators by a symbol construction in \cite{Taylor2017}. 
The DN map $\Lambda$ is a matrix-valued pseudodifferential operator with the principal symbol $\sigma_p(\Lambda)$ in (\ref{symbol_dnmap}) and the way to diagonalize it is not obvious.
One can see the diagonalization is global and
it gives us a system of one hyperbolic equation and two elliptic equations on the boundary with some metric. 
The solution to this system applied by a $\Psi$DO of order zero serves as the Dirichlet boundary condition on the timelike boundary $\mathbb{R} \times \partial \Omega$ of the elastic system, and then the Rayleigh wave can be constructed in the same way as the construction of the parametrix for elliptic evolution equations, as the Cauchy data is microlocally supported in the elliptic region. 
The wave front set and the direction of the microlocal polarization of the Rayleigh waves on the boundary can be derived during the procedure and 
they explain the propagation of Rayleigh waves and show a retrograde elliptical particle motion.
These results are based on the diagonalization of the DN map.
In Section \ref{section_R_cauchy}, we derive the microlocal Rayleigh waves on the boundary if we have the Cauchy data at $t=0$. 
The polarization is given in Theorem \ref{thm_cauchy} and the leading term shows a retrograde elliptical motion of the particles, same as that of the case of homogeneous media in \cite{1904a,KeiitiAki2002}, as is explained after Theorem \ref{thm_cauchy}. 
In Section \ref{section_R_ih}, the inhomogeneous problem, i.e., when there is a source on the boundary, is studied and the microlocal solution and polarization are presented in Theorem \ref{thm_ihm}.
In the second part of this work, Stoneley waves are analyzed in a similar way with a more complicated system on the boundary.
We construct the microlocal solutions without justifying the parametrix. 
It still remains to prove that the exact solution has the same microlocal behavior as the solution we construct. 
The main results of Stoneley waves for Cauchy problems is in Theorem \ref{thm_cauchy_S} and for inhomogeneous problems is in Thereon \ref{thm_ihm_S}.
The microlocal Stoneley waves derived there have similar patterns as that of Rayleigh waves and one can show the leading term shows a retrograde elliptical motion of the particles as well.

\subsection*{Acknowledgments}
The author would like to thank Prof. Plamen Stefanov for suggesting this problem and for lots of helpful discussions with him throughout this project, and to thank Prof. Vitaly Katsnelson, Prof. Mark Williams for helpful suggestions on the transmission problems part.

\section{Preliminaries}\label{section_pre}
Suppose $\Omega \subset \mathbb{R}^3$ is a bounded domain with smooth boundary.
Suppose the density $\rho$ and the Lam\'e parameters $\mu, \nu$ are smooth functions depending on the space variable $x$, and even the time $t$. 

In this section, we recall some notations and results in \cite{Stefanov2019}. 
For a fixed point $x_0$ on the boundary, one can choose the semigeodesic coordinates $x = (x', x^3)$ such that the boundary $\partial \Omega$ is locally given by $x^3 = 0$.  
For this reason, we view $u$ as a one form and write the elastic system in the following invariant way in presence of a Riemannian metric $g$. 
Let $\nabla$ be the covariant differential in Riemannian geometry. 
We define the symmetric differential $\diff^s$ and the divergence $\delta$ as
\begin{align*}
(\diff^s u)_{ij} = \frac{1}{2}(\nabla_i u_j + \nabla_j u_i), \quad (\delta v)_i = \nabla^j v_{ij}, 
\quad \delta u = \nabla^i u_i,
\end{align*}
where $u$ is a covector field and $v$ is a symmetric covariant tensor field of order two.
The stress tensor is given by 
\[
\sigma(u) = \lambda(\delta u) g + 2 \mu \diff^s u.
\]
Then the operator $E$ and the normal stress are 
\[
Eu=  \rho^{-1} \delta \sigma(u) = \rho^{-1} (\diff(\lambda \delta u) + 2 \delta(\mu \diff^s u)), 
\quad N u = \sum_j \sigma_{ij}(u) \nu^j|_{\partial \Omega},
\]
where $\nu^j$ is the outer unit normal vector on the boundary.
The elastic wave equation can be written as
\begin{align*}
u_{tt} = Eu
\end{align*}
and near some fixed $(x_0, \xi^0)$ one can decouple this system up to smoothing operators by a $\Psi$DO $U$ of order zero such that
\begin{align*}
U^{-1} E U = 
\begin{pmatrix}
c_s^2 \Delta_g + A_s & 0\\
0 & c_p^2 \Delta_g + A_p\\ 
\end{pmatrix}
\mod \Psi^{-\infty},
\end{align*}
where $A_s$ is a $2 \times 2$ matrix $\Psi$DO of order one, $A_p$ is a scalar $\Psi$DO of order one, and the S wave and P wave speed are
\begin{align*}
c_s = \sqrt{\mu/\rho}, \quad c_p = \sqrt{(\lambda + 2 \mu)/\rho}.
\end{align*}
Let $w =(w^s, w^p) = U^{-1}u$. 
Then the elastic system decouples into two wave equations.
This decoupling indicates that the solution $u$ has a natural decomposition into the S wave and P wave modes.
\subsection{The boundary value problem in the elliptic region}\label{section_pre_bvp}
When we solve the boundary value problems for the elastic system, the construction of the microlocal outgoing solution depends on where the wave front set of the Cauchy data is. 
The Rayleigh waves happen when there is a free boundary and the singularities of the Cauchy data on the boundary are in the \textbf{elliptic region} $\tau^2 < c_s^2 |\xi'|_g^2$. 

In this case, given the boundary data $f = u|_{x^3 = 0}$, first we get $w_b \equiv w|_{x^3 =0}$ by considering the restriction operator $U_{out}$ of the $\Psi$DO $U$ to the boundary, which maps $w|_{x^3 =0}$ to $f$. 
It is shown that $U_{out}$ is an elliptic one and therefore it is microlocally invertible in \cite{Stefanov2019}.
Then we seek the outgoing microlocal solution $w$ to the two wave equations with the boundary data.
Since the wave front set of the boundary data is in the elliptic region, the Eikonal equations have no real valued solutions. Instead, the microlocal solution is constructed by a complex valued phase function, see \cite[\S5.3]{Stefanov2019} and Section \ref{u_construct} for more details. After we construct $w$, we have $u = U^{-1}w$ as the microlocal solution to the elastic system.
\section{Rayleigh Wave}\label{Rayleigh}
The main goal of this section is to construct the microlocal solution of Rayleigh waves and to analyze their microlocal polarization. 
We follow Denker's notation to denote the vector-valued distributions on a smooth manifold $X$ with values in $\mathbb{C}^N$ by $\mathcal{D}'(X, \mathbb{C}^N)$. Similarly $\mathcal{E}'(X, \mathbb{C}^N)$ is the set of distributions with compact support in $X$ with values in $\mathbb{C}^N$. 

Suppose only for a limited time $0 <t< T$ there is a source on the timelike boundary 
$\Gamma = \mathbb{R}_t \times \partial \Omega$.
Let $u$ be an outgoing solution to the boundary value problem with the inhomogeneous Neumann boundary condition, i.e.
\begin{equation}\label{elastic_eq}
\begin{cases}
u_{tt} - Eu  = 0 & \mbox{in } \mathbb{R}_t \times \Omega,\\
N u = l &\mbox{on } \Gamma,\\
u|_{t<0} = 0,
\end{cases}
\end{equation}
where $l(t,x') \in \mathcal{E}'((0,T) \times \mathbb{R}^2, \mathbb{C}^3)$ is microlocally supported in the elliptic region.
Let $\Lambda$ be the \textbf{Dirichlet-to-Neumann map} (DN map) which maps the Dirichlet boundary data $u|_\Gamma$ to the Neumann boundary data $Nu|_\Gamma$.
Then solving (\ref{elastic_eq}) microlocally is equivalent to solving the Dirichlet boundary value problem $u|_\Gamma = f$,
with $f$ being the outgoing solution on $\Gamma$ to 
\begin{equation}\label{dn_eq}
\Lambda f = l, \quad \text{} f|_{t<0} = 0.
\end{equation}
More specifically, as long as we solve $f$ from (\ref{dn_eq}), we can construct the 
microlocal outgoing solution $u$ to (\ref{elastic_eq}) as an evanescent mode with a complex valued phase function.
The construction of the microlocal solution to the boundary value problem with wave front set in the elliptic region is well studied in \cite[\S 5.3]{Stefanov2019}, and also see
Section \ref{u_construct}.
The main task of this section is to solve (\ref{dn_eq}) microlocally and study the microlocal polarization of its solution.
\begin{pp}
    In the elliptic region, the DN map $\Lambda$ is a matrix $\Psi$DO with principal symbol
    \begin{equation}\label{symbol_dnmap}
    \sigma_p(\Lambda)=
    \frac{1}{|\xi'|_g^2 - \alpha \beta}
    \begin{pmatrix} 
    \mu (\alpha - \beta)\xi_2^2 + \beta \rho \tau^2 & -\mu\xi_1\xi_2(\alpha-\beta)
    & -i\mu \xi_1 \theta\\
    -\mu\xi_1\xi_2(\alpha-\beta) & \mu (\alpha - \beta)\xi_2^2 + \beta \rho \tau^2 
    & -i\mu \xi_2 \theta\\ 
    i \mu \theta \xi_1 & i \mu \theta \xi_2 & \alpha \rho \tau^2
    \end{pmatrix}
    \equiv \frac{1}{|\xi'|_g^2 - \alpha \beta} N_1
    ,
    \end{equation}
    where
    \begin{align}\label{def_alpha}
    \alpha = \sqrt{|\xi'|_g^2 - c_s^{-2} \tau^2}, 
    \quad \beta = \sqrt{|\xi'|_g^2 - c_p^{-2} \tau^2},
    \quad \theta 
    = 2 |\xi'|_g^2 -  c_s^{-2} \tau^2 - 2 \alpha \beta.
    \end{align}
\end{pp}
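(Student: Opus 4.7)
The plan is to work locally at a boundary point $x_0$ in semigeodesic coordinates so that $g$ is Euclidean at $x_0$, and to construct the outgoing microlocal solution $u$ of the Dirichlet problem $u|_{x^3=0}=f$ in the elliptic region as a sum of three plane-wave modes, all evanescent in $x^3$: a compressional (P) mode with amplitude along $(\xi_1,\xi_2,i\beta)^T$ decaying as $e^{-\beta x^3}$, a vertically polarized shear (SV) mode with amplitude along $(i\alpha\xi_1,i\alpha\xi_2,-|\xi'|_g^2)^T$ decaying as $e^{-\alpha x^3}$, and a horizontally polarized shear (SH) mode along $(-\xi_2,\xi_1,0)^T$ also decaying as $e^{-\alpha x^3}$. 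Since $\tau^2<c_s^2|\xi'|_g^2$, the eikonal equations of the decoupled wave operators $c_s^2\Delta_g+A_s$ and $c_p^2\Delta_g+A_p$ from Section \ref{section_pre} admit only the complex roots $\xi_3=\pm i\alpha,\pm i\beta$, and selecting $+i\alpha,+i\beta$ gives the outgoing microlocal parametrix; the three polarization vectors are precisely the null-eigenvectors of the principal symbol of $-\tau^2+E$ evaluated at the complex wave vectors $(\xi_1,\xi_2,i\beta)$ and $(\xi_1,\xi_2,i\alpha)$.

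Matching $u|_{x^3=0}=f$ yields a $3\times 3$ linear system for the amplitudes $(C_p,C_{sv},C_{sh})$ in terms of $f$. Temporarily rotating the tangential frame so that $\xi'=(|\xi'|_g,0)$ uncouples the SH mode from the P--SV block; the residual $2\times 2$ P--SV Dirichlet matrix has determinant $-|\xi'|_g(|\xi'|_g^2-\alpha\beta)$, which is exactly the source of the denominator in (\ref{symbol_dnmap}), while the SH block simply reads $u_2=|\xi'|_g C_{sh}$. Inverting this system and computing $Nu|_{x^3=0}$ mode by mode from $\sigma(u)=\lambda(\delta u)g+2\mu d^s u$, with $\partial_3$ replaced by $-\beta$ on the P mode and by $-\alpha$ on the two S modes, produces in the rotated frame a tangential $(1,1)$ entry $\beta\rho\tau^2/(|\xi'|_g^2-\alpha\beta)$, a $(3,3)$ entry $\alpha\rho\tau^2/(|\xi'|_g^2-\alpha\beta)$, off-diagonal couplings $\mp i\mu\theta|\xi'|_g/(|\xi'|_g^2-\alpha\beta)$ between the tangential-along-$\xi'$ and normal components, and a $(2,2)$ entry $\mu\alpha$ arising from the SH mode alone.

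Rotating back to the general tangential frame, the tangential block takes the form
\[
\frac{\beta\rho\tau^2}{|\xi'|_g^2-\alpha\beta}\,\frac{\xi_a\xi_b}{|\xi'|_g^2}+\mu\alpha\Bigl(\delta_{ab}-\frac{\xi_a\xi_b}{|\xi'|_g^2}\Bigr),
\]
while the tangential-normal couplings inherit a factor $\xi_j$ from rotation, producing the $\pm i\mu\xi_j\theta/(|\xi'|_g^2-\alpha\beta)$ entries. The main obstacle is the algebraic repackaging into the stated matrix $N_1$: one has to use the defining identity $\rho\tau^2/\mu=\tau^2/c_s^2=|\xi'|_g^2-\alpha^2$, or equivalently $\rho\tau^2=2\mu(|\xi'|_g^2-\alpha\beta)-\mu\theta$, to combine the SH and P--SV contributions and recover the diagonal entries $\mu(\alpha-\beta)\xi_2^2+\beta\rho\tau^2$ (and symmetrically for $(2,2)$) together with the off-diagonal entries $-\mu\xi_1\xi_2(\alpha-\beta)$. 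Once this repackaging is carried out, the full $3\times 3$ symbol matches (\ref{symbol_dnmap}) modulo lower-order terms, completing the identification of $\sigma_p(\Lambda)$.
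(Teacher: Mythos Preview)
Your argument is correct and is in substance the same computation as the paper's, but organized differently. The paper factors $\Lambda = M_{out}U_{out}^{-1}$, quotes the explicit principal symbols of $U_{out}$, $U_{out}^{-1}$ and $M_{out}$ from \cite{Stefanov2019}, and simply multiplies the $3\times 3$ matrices. You instead rebuild the parametrix by hand as a superposition of the three evanescent modes (P, SV, SH), which amounts to choosing a different basis for the column space of $\sigma_p(U_{out})$: your P mode is the paper's third column, while your SV and SH modes are specific linear combinations of the paper's first two columns. Your tangential rotation sending $\xi'$ to $(|\xi'|_g,0)$ is exactly the conjugation by the matrix $V_0$ that the paper introduces \emph{after} this proposition, in (\ref{V0N1}), to begin diagonalizing $\sigma_p(\Lambda)$; you are effectively using that conjugation one step earlier to simplify the computation of $\sigma_p(\Lambda)$ itself. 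The algebraic repackaging you flag, via $\rho\tau^2=\mu(|\xi'|_g^2-\alpha^2)$, is precisely what is needed to match the SH entry $\mu\alpha$ with the paper's form $\bigl(\mu(\alpha-\beta)\xi_1^2+\beta\rho\tau^2\bigr)/(|\xi'|_g^2-\alpha\beta)$ in the rotated frame; this checks out (note the paper's $(2,2)$ diagonal entry should read $\xi_1^2$ rather than $\xi_2^2$). What the paper's route buys is brevity, since the matrices are already tabulated; what yours buys is self-containment and a cleaner explanation of why the factor $|\xi'|_g^2-\alpha\beta$ appears, as the determinant of the P--SV Dirichlet block.
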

\begin{proof}
    In \cite{Stefanov2019} the restriction $U_{out}$ of $U$ to the boundary maps $w|_{x^3=0}$ to $f = u|_{x^3=0}$ and it is an elliptic $\Psi$DO with principal symbol and the parametrix
    \[
    \sigma_p(U_{out}) = 
    \begin{pmatrix} 
    0 & -i\alpha & \xi_1\\
    i\alpha & 0 & \xi_2\\ 
    -\xi_2 & \xi_1 & i\beta
    \end{pmatrix},
    \quad 
    \sigma_p(U_{out}^{-1}) = 
    -\frac{i}{\alpha(|\xi'|_g^2 - \alpha\beta)}
    \begin{pmatrix} 
    -\xi_1\xi_2 & \xi_1^2 - \alpha\beta & -i\alpha\xi_2\\
    -(\xi_2^2 - \alpha\beta) & \xi_1\xi_2 & i\alpha \xi_1\\ 
    i\alpha \xi_1 & i\alpha \xi_2 & -\alpha^2
    \end{pmatrix}.
    \]
    The operator $M_{out}$ that maps $w|_{x^3=0}$ to the Neumann boundary data $l= Nu|_{x^3=0}$ is an $\Psi$DO 
    with principal symbol
    \[
    \sigma_p(M_{out}) = -i 
    \begin{pmatrix} 
    -\mu\xi_1\xi_2 & \mu(\xi_1^2 + \alpha^2) & 2i \mu \beta \xi_1 \\
    -\mu(\xi_2^2 + \alpha^2) & \mu\xi_1\xi_2 & 2i \mu\beta\xi_2\\ 
    -2i \mu\alpha \xi_2 & 2i \mu \alpha \xi_1 & -2\mu|\xi'|_g^2+\rho \tau^2
    \end{pmatrix}.
    \]
    We emphasize that the notation $M_{out}$ we use here is sightly different from that in \cite[\S 6.1]{Stefanov2019}. 
    Then the principal symbol of the DN map 
    \[
    \sigma_p(\Lambda) = \sigma_p(M_{out} U_{out}^{-1}) = \sigma_p(M_{out}) \sigma_p( U_{out}^{-1})
    \]
    can be computed and is given in (\ref{symbol_dnmap}).
\end{proof}
\subsection{Diagonalization of the DN map}
In the following, we first diagonalize $\sigma_p(\Lambda)$ in the sense of matrix diagonalization and then we microlocally decouple the system $\Lambda f = l$ up to smoothing operators. 
By \cite{Stefanov2000}, first we have
\begin{align}\label{V0N1}
V_0^*(t, x', \tau, \xi') N_1(t, x', \tau, \xi') V_0(t, x', \tau, \xi')= 
\begin{pmatrix} 
\beta \rho \tau^2 & -i{{|\xi'|}_g} \mu \theta & 0\\
i{{|\xi'|}_g} \mu \theta & \alpha \rho \tau^2 & 0\\
0 & 0 & \mu \alpha (|\xi'|_g^2 - \alpha \beta),
\end{pmatrix}.
\end{align}
where
\begin{equation*}\label{v0}
V_0(t, x', \tau, \xi') = 
\begin{pmatrix} 
{\xi_1}/{{{|\xi'|}_g} } & 0 & -{\xi_2}/{{{|\xi'|}_g} }\\
{\xi_2}/{{{|\xi'|}_g} } & 0 & {\xi_1}/{{{|\xi'|}_g} }\\ 
0 & 1 & 0
\end{pmatrix}.
\end{equation*}

Then let
\begin{align*}
&m_1(t, x',\tau, \xi') = \frac{(\alpha+\beta)\rho \tau^2 - \sqrt{\varrho}}{2}, 
&m_2(t, x',\tau, \xi') = \frac{(\alpha+\beta)\rho \tau^2 + \sqrt{\varrho}}{2}, \\
&m_3(t, x',\tau, \xi') = \mu \alpha(|\xi'|_g^2 - \alpha\beta),
&\text{ with } \varrho = (\alpha-\beta)^2 \rho^2 \tau^4 + 4 |\xi'|_g^2 \mu^2 \theta^2 >0.
\end{align*}
Notice we always have the following equalities
\begin{equation}\label{m1m2}
m_1 + m_2 = (\alpha + \beta) \rho \tau^2, \qquad
m_1m_2 = \alpha \beta \rho^2 \tau^4 - |\xi'|_g^2 \mu^2 \theta^2.
\end{equation}
We conclude that the principal symbol $\sigma_p(\Lambda)$ can be diagonalized 
\[
W^{-1}(t, x',\tau, \xi') \sigma_p(\Lambda) W(t, x',\tau, \xi') = 
\frac{1}{|\xi'|_g^2 - \alpha \beta}
\begin{pmatrix} 
m_1(t, x',\tau, \xi') & 0 & 0\\
0 & m_2(t, x',\tau, \xi') & 0\\
0 & 0 & m_3(t, x',\tau, \xi')
\end{pmatrix}
\]
by a matrix 
\[
W(t, x', \tau, \xi') = V_0(t, x', \tau, \xi')V_1(t, x', \tau, \xi'), 
\]
where
\begin{equation*}\label{v1}
V_1(t, x',\tau, \xi') = 
\begin{pmatrix} 
{i {{|\xi'|}_g}  \mu \theta}/{k_1} 
& {i {{|\xi'|}_g}  \mu \theta}/{k_2} & 0\\
{(\beta \rho \tau^2 - m_1 )}/{k_1} 
& {(\beta \rho \tau^2 - m_2 )}/{k_2} & 0\\
0 & 0 & 1
\end{pmatrix},
\end{equation*}
with
\begin{align}\label{def_k}
k_j = \sqrt{(\beta \rho \tau^2 - m_j)^2 + |\xi'|_g^2\mu^2 \theta^2}, \quad  \text{for } j=1,2.
\end{align}
More specifically,
\begin{equation}\label{W}
W(t, x',\tau, \xi') = 
\begin{pmatrix} 
{i \mu \theta \xi_1}/{k_1}
&{i \mu \theta \xi_1}/{k_2} & -{\xi_1}/{|\xi|}\\
{i \mu \theta \xi_2}/{k_1}
& {i \mu \theta \xi_2}/{k_2} & {\xi_2}/{|\xi|}\\
{(\beta \rho \tau^2 - m_1)}/{k_1} &  {(\beta \rho \tau^2 - m_2)}/{k_2} & 0
\end{pmatrix}
\end{equation}
is an unitary matrix.
Here $m_1, m_2, m_3$ are the eigenvalues of $N_1(t, x', \tau, \xi')$
smoothly depending on $t, x', \tau, \xi'$.
The eigenvalues $\widetilde{m}_j(t,x', \tau, \xi')$ of $\sigma_{p}(\Lambda)$ are given by 
\[
\widetilde{m}_j(t,x', \tau, \xi') =  m_j(t,x', \tau, \xi') /({|\xi'|_g^2 - \alpha \beta}),
\]
for $j = 1, 2, 3$.
Notice that $m_2, m_3$ are always positive. 
It follows that only $m_1(t, x',\tau, \xi')$ could be zero and this happens
if and only if the determinant of the $2\times 2$ blocks in (\ref{V0N1}) equals zero, i.e.
\begin{align*}
0 = \alpha \beta \rho^2 \tau^4 - |\xi'|_g^2 \mu^2 \theta^2 
&=
\alpha \beta \rho^2 \tau^4 - |\xi'|_g^2 \mu^2 (|\xi'|_g^2 + \alpha^2 - 2\alpha \beta)^2\\
&= (|\xi'|_g^2 - \alpha \beta)\underbrace{(4 \mu^2 \alpha \beta |\xi'|_g^2 - (\rho \tau^2 - 2 \mu |\xi'|_g^2)^2)}_{R(\tau, \xi')}. \stepcounter{equation}\tag{\theequation}\label{Rayleigh_determinant}
\end{align*}
Notice the elliptic region has two disconnected comportments $\pm \tau >0$. 
We consider the analysis for $\tau >0$ and the other case is similar. Define $s = {\tau}/{{{|\xi'|}_g} }$ and let 
\begin{align}\label{def_ab}
&a(s) = \frac{\alpha}{{{|\xi'|}_g} } = \sqrt{1 - c_p^{-2} s^2},
\quad b(s) = \frac{\beta}{{{|\xi'|}_g} } = \sqrt{1 - c_s^{-2} s^2}, \nonumber \\
&\theta(s) = \frac{\theta}{|\xi'|_g^2} = 2  -  c_s^{-2} s^2 - 2 a(s) b(s), 
\quad k_j(s) =  \frac{k_j}{{{|\xi'|}_g}^3}, \quad  \text{for } j=1,2.
\end{align}
Then (\ref{Rayleigh_determinant}) is equivalent to 
\begin{align}\label{Rwave}
R(s) \equiv \frac{R(\tau, \xi')}{|\xi'|_g^4} = 4\mu^2 a(s)b(s) - (\rho s^2- 2\mu)^2 =0. 
\end{align}
It is well-known that at fixed point $(t, x')$, there exists a unique simple zero $s_0$ satisfying $R(s) =0 $ for $0< s < c_s < c_p$. This zero $s_0$ corresponds to a wave called the Rayleigh wave and it is called the \textbf{Rayleigh speed} $c_R \equiv {s_0} < c_s < c_p$. Rayleigh waves are first studied in \cite{Rayleigh1885}.
Since $s_0$ is simple, i.e., $R'(s_0) \neq 0$, 
by the implicit function theorem we have the root of $R(s) =0$ can be written as a smooth function $s_0(t, x')$ near a small neighborhood of the fixed point. 
Then we can write $m_1(t, x', \tau, \xi')$ as a product of $(s-s_0(t, x'))$ and an elliptic factor, i.e.
\begin{align}\label{def_e0}
\widetilde{m}_1(t, x', \tau, \xi') = \frac{1}{{{|\xi'|}_g}  - \alpha\beta} m_1(t,x', \tau, \xi') = e_0(t,x',\tau, \xi') i (\tau - c_R(t,x'){{|\xi'|}_g} ),
\end{align}
where $e_0$ is nonzero and homogeneous in $(\tau, \xi')$ of order zero 
\begin{align*}
e_0(t,x',\tau, \xi') = \frac{m_1(t,x',\tau, \xi')}{ i (|\xi'|_g - \alpha \beta)(\tau - c_R(t,x')|\xi'|_g)} = \frac{R(s)}{ i (s - c_R(t,x'))m_2(s)}.
\end{align*}
The last equality is from (\ref{m1m2}), (\ref{Rayleigh_determinant}), (\ref{Rwave}).
There is a characteristic variety  
\[
\Sigma_R = \{(t,x', \tau, \xi'), \ \tau^2 -  c_R^2(t,x') |\xi'|_g^2 = 0 \}
\]
corresponding to ${m}_1 = 0$.
In particular, by (\ref{m1m2} ) along $\Sigma_R$ we have
\begin{align}\label{e0}
e_0(t,x',\tau, \xi') = \frac{R'(c_R)}{2i(a(c_R)+ b(c_R))\rho c_R^2 }.
\end{align}
In order to fully decouple the system up to smoothing operators, 
we want the three eigenvalues to be distinct. 
Notice this is not necessary in our situation, since with $m_2, m_3>0$ one can always decouple the system into a hyperbolic one and an elliptic system near $s_0$.
\begin{claim}
    Near $s=c_R$, the eigenvalues $m_1(t,x', \tau, \xi'), m_2(t,x', \tau, \xi'), m_3(t,x', \tau, \xi')$ are distinct.
\end{claim}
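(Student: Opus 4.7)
The plan is to verify distinctness of $m_1, m_2, m_3$ pointwise at $s = c_R$ and invoke continuity; the three eigenvalues are continuous in $(t, x', \tau, \xi')$, so it suffices to separate them at the Rayleigh point itself.

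First I would pin down $m_1(c_R) = 0$. By (\ref{m1m2}) we have $m_1 m_2 = \alpha\beta\rho^2\tau^4 - |\xi'|_g^2\mu^2\theta^2$, and (\ref{Rayleigh_determinant}) factors this as $(|\xi'|_g^2 - \alpha\beta)R(\tau,\xi')$. Along $\Sigma_R$ we have $R = 0$, while in the Rayleigh regime $s = c_R < c_s < c_p$ one has $a(c_R), b(c_R) \in (0,1)$, so $|\xi'|_g^2 - \alpha\beta = |\xi'|_g^2(1 - ab) > 0$. Therefore $m_1 m_2 = 0$, and since $m_2 = \tfrac{1}{2}((\alpha+\beta)\rho\tau^2 + \sqrt{\varrho}) > 0$ strictly (the sum of a positive quantity and a nonnegative one), we conclude $m_1 = 0$ and $m_2 \neq m_1$. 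The same calculation shows $m_3 = \mu\alpha(|\xi'|_g^2 - \alpha\beta) > 0 = m_1$, so both $m_2$ and $m_3$ are separated from $m_1$.

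The main task is to check $m_2 \neq m_3$ at $s = c_R$; this is the only non-automatic inequality. The key observation is that the Rayleigh equation (\ref{Rwave}) gives $2\mu\sqrt{a(c_R)b(c_R)} = |{\rho c_R^2 - 2\mu}|$, and since $c_R < c_s$ forces $\rho c_R^2 < \rho c_s^2 = \mu < 2\mu$, we may resolve the absolute value as
\[
\rho c_R^2 = 2\mu\bigl(1 - \sqrt{a(c_R)b(c_R)}\bigr).
\]
Writing $a = a(c_R)$, $b = b(c_R)$ and using $m_2 = (\alpha+\beta)\rho c_R^2 |\xi'|_g^2 = \rho c_R^2(a+b)|\xi'|_g^3$ (from $m_1 = 0$ and $m_1 + m_2 = (\alpha+\beta)\rho\tau^2$) together with $m_3 = \mu a(1-ab)|\xi'|_g^3$ and $1 - ab = (1-\sqrt{ab})(1+\sqrt{ab})$, a short manipulation factors $1 - \sqrt{ab}$ out of the difference:
\[
m_2 - m_3 = \mu|\xi'|_g^3\,(1 - \sqrt{ab})\bigl[a(1 - \sqrt{ab}) + 2b\bigr].
\]
Each bracketed factor is strictly positive (since $ab < 1$ and $a, b > 0$), so $m_2 > m_3 > 0 = m_1$ at $s = c_R$.

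The main obstacle is really only the algebraic identification in the last step: the expressions for $m_2$ and $m_3$ are not obviously unequal until one uses the Rayleigh equation to replace $\rho c_R^2$, after which the common factor $(1 - \sqrt{ab})$ reveals itself and makes the sign manifest. Once the strict inequalities $m_1 < m_3 < m_2$ are established at $s = c_R$, continuity of the three eigenvalues in $s$ (and in $(t, x')$) extends distinctness to a neighborhood, completing the claim.
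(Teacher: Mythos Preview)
Your proof is correct but takes a different route from the paper's. The paper establishes the stronger statement that $m_2 > m_3$ holds for \emph{every} $s$ in the elliptic region, by bounding the square root $\sqrt{(a-b)^2\rho^2 s^4 + 4\mu^2\theta(s)^2}$ from below by $2\mu\theta(s)$ and using $\rho s^2 = \mu(1-a^2)$; a short algebraic simplification then exhibits $m_2 - m_3$ as a sum of positive terms. You instead work only at the single point $s = c_R$: there $m_1 = 0$ lets you replace $m_2$ by the trace $(a+b)\rho c_R^2|\xi'|_g^3$, and the Rayleigh equation gives $\rho c_R^2 = 2\mu(1-\sqrt{ab})$, yielding the clean factorization $m_2 - m_3 = \mu|\xi'|_g^3(1-\sqrt{ab})\bigl[a(1-\sqrt{ab}) + 2b\bigr] > 0$; continuity then carries distinctness to a neighborhood. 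Your argument is tidier at the Rayleigh point itself and makes explicit use of the Rayleigh relation, while the paper's argument is uniform in $s$ and slightly more robust (it does not rely on being at the characteristic set). Either approach suffices for the claim as stated.
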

\begin{proof}
    We already have $m_1 \neq m_2$.
    Additionally, one can show that $m_2 > m_3$ is always true by the following calculation
    \begin{align*}
    m_2 - m_3  
    = & {{|\xi'|}_g}^3 \big(
    \frac{(a+b)\rho s^2 + \sqrt{(a-b)^2 \rho^2 s^4 + 4 \mu^2(1+a^2 -2ab)^2 }}{2}
    - \mu a(1 -ab)
    \big)\\
    > &{{|\xi'|}_g}^3 \big(
    \frac{(a+b)\mu(1-a^2) + 2 \mu(1+a^2 -2ab) }{2}- \mu a(1 - ab) \big)\\
    = &\frac{\mu}{2}\big((b-a) + a^2(b-a) + (a-b)^2 + 1-b^2)>0,
    \end{align*}
    where  $a(s), b(s)$ are defined in (\ref{def_ab}).
    The values of $m_1$ and $m_3$ might coincide but near $\Sigma_R$ they are separate, since $m_1$ is close to zero while $m_3 = \mu \alpha (|\xi'|_g^2 - \alpha \beta)>0$ has a positive lower bound. Therefore, near $\Sigma_R$ we have three distinct eigenvalues.    
\end{proof}

Let $\widetilde{W}(t,x', D_t, D_{x'})$ be an elliptic $\Psi$DO of order zero as constructed in \cite{Taylor2017}
with the principal symbol equal to $W(t,x',\tau, \xi')$. 
Let the operators $e_0(t,x', D_t, D_{x'}) \in \Psi^0$  with symbol 
$e_0(t,x', \tau, \xi')$
and $\widetilde{m}_j(t,x', D_t, D_{x'})\in \Psi^1$ with symbols $\frac{1}{{{|\xi'|}_g}  - \alpha \beta} m_j(t,x', \tau, \xi')$, for $j= 2,3$.
Near some fixed $(t,x', \tau, \xi') \in \Sigma_R$, the DN map $\Lambda$ can be fully decoupled as 
\begin{align*}
\widetilde{W}^{-1} \Lambda \widetilde{W}  =
\begin{pmatrix} 
 e_0 (\partial_t - i c_R(t,x')\sqrt{-\Delta_{x'}}) + r_1 & 0 & 0\\
0 & \widetilde{m}_2 + r_2 & 0\\
0 & 0 & \widetilde{m}_3 + r_3
\end{pmatrix} 
\mod \Psi^{-\infty},
\end{align*}
where 
$r_1(t,x', D_t, D_{x'}), r_2(t,x', D_t, D_{x'}), r_3(t,x', D_t, D_{x'}) \in \Psi^0$
are the lower order term.
If we define 
\begin{align}\label{def_lambda}
r(t, x', D_t, D_x')= e_0^{-1} r_1 \in \Psi^0, \quad \lambda(t, x', D_{x'}) = c_R(t, x') \sqrt{-\Delta_{x'}} \in \Psi^1
\end{align} 
in what follows,
then the first entry in the first row can be written as $e_0(\partial_t - i \lambda(t, x', D_{x'}) + r(t, x', D_t, D_x'))$. 
\begin{remark}
    Each entry of the matrix $\sigma_p(\Lambda)$ is homogeneous in $(\tau, \xi')$ of order $1$
    and that of $W(t,x', \tau, \xi')$ is homogeneous of order $0$.
    The operator $e_0(t, x', D_t, D_x')$ has a homogeneous symbol, which implies
    its parametrix will have a classical one. 
    After the diagonalization of the system,  the operator $r_1(t, x', D_t, D_x')$ have
    a classical symbol, and so does $r(t, x', D_t, D_x')$.
\end{remark}
\begin{remark}
If the density $\rho$ and the Lam\'e parameters $\lambda, \nu$ are time-dependent, then $\lambda, r$ depend on $t, x$. Otherwise, the eigenvalues $m_1, m_2, m_3$ only depends on $x, \xi, \tau$, and therefore we have $s_0(x), c_R(x)$ and $\lambda(x, \xi), r(x, D_t, D_x)$ instead of the functions and operators above.
\end{remark}
Now 
let 
\begin{align}\label{def_fl}
&h =
\begin{pmatrix}
h_1\\
h_2\\
h_3
\end{pmatrix} 
=
\widetilde{W}^{-1}
\begin{pmatrix}
f_1\\
f_2\\
f_3
\end{pmatrix} 
=  \widetilde{W}^{-1} f,
&\tilde{l} 
=
\begin{pmatrix}
\tilde{l}_1\\
\tilde{l}_2\\
\tilde{l}_3
\end{pmatrix} 
=\widetilde{W}^{-1}
\begin{pmatrix}
{l}_1\\
{l}_2\\
{l}_3
\end{pmatrix} 
= \widetilde{W}^{-1}l,
\end{align} 
where $u_j$ is the component of any vector valued distribution $u$ for $j = 1, 2, 3$.
Solving $\Lambda f = l \mod C^\infty$ is microlocally equivalent to solving the following system
\begin{equation}\label{hyperbolic_eq}
\begin{cases}
(\partial_t - i c_R(t,x')\sqrt{-\Delta_{x'}} + r(t, x', D_t, D_x')) h_1 = e_0^{-1}\tilde{l}_1, &\mod C^\infty,\\
(\widetilde{m}_2 + r_2)h_2 = \tilde{l}_2,  &\mod C^\infty,\\
(\widetilde{m}_3 + r_3)h_3 = \tilde{l}_3, &\mod C^\infty.
\end{cases}
\end{equation}
In the last two equations, the operators $\widetilde{m}_j + r_j$ are elliptic so we have $h_j = (\widetilde{m}_j + r_j)^{-1} \tilde{l}_j \mod C^\infty$, for $j=2,3$.
The first equation is a first-order hyperbolic equation with lower order term.

\subsection{Inhomogeneous hyperbolic equation of first order}
For convenience, in this subsection we use the notation $x$ instead of $x'$. Suppose $x \in \mathbb{R}^n$.
\begin{df}
    Let $\lambda(t,x, D_{x}) \in \Psi^1$ be an elliptic operator with a classical symbol smoothly depending on a parameter $t$ and 
    the lower term $r(t, x, D_t, D_{x}) \in \Psi^0$ with a classical symbol.
\end{df}
In this subsection we are solving the inhomogeneous hyperbolic equation
\begin{equation}\label{iheq_r}
\begin{cases}
(\partial_t - i \lambda(t,x, D_{x} + r(t,x,D_t, D_x))) w = g(t,x), \quad t >0 \\
w(0,x) = 0.
\end{cases}
\end{equation}
where $g(t,x) \in \mathcal{E}'((0, T) \times \mathbb{R}^n)$ with microsupport in the elliptic region.

Generally, the operator $\partial_t - i \lambda(t,x, D_{x})$ is not a $\Psi$DO unless the principal symbol of $\lambda$ is smooth in $\xi$ at $\xi = 0$. 
However, since we only consider the elliptic region, we can always multiply it by a cutoff $\Psi$DO whose microsupport is away from $\xi = 0$ and this gives us a $\Psi$DO.  
Therefore, by the theorem of propagation of singularities by H\"{o}rmander, we have
$
\text{\( \WF \)} (w) \subset \text{\( \WF \)} (g) \cup C_F \circ \text{\( \WF \)} (g)
$
if $w$ is the solution to (\ref{iheq_r}),
where $C_F$ is given by the flow of $H_{\partial_t - i \lambda(t,x, D_{x})}$, for a more explicit form see (\ref{C1}).
\subsubsection{Homogeneous equations}\label{subsec_he}
We claim the homogeneous first-order hyperbolic equation with lower terms given an initial condition
\begin{equation}\label{home_hyperbolic}
\begin{cases}
(\partial_t - i \lambda(t,x, D_{x}) + r(t, x, D_t, D_x)) v = 0,  \mod C^\infty\\
v(0,x) = v_0(x)\in \mathcal{E}'(\mathbb{R}^n),
\end{cases}
\end{equation}
has a microlocal solution by the geometric optics construction
\begin{equation}\label{goc1}
v(t, x)  = \int a(t,x,\xi) e^{i\varphi(t, x, \xi)} \hat{v}_0(\xi) \diff \xi, \quad \mod C^\infty,
\end{equation}
where we require $a(t,x, \xi) \in S^0$ and $\varphi(t,x,\xi)$ is a phase function that is smooth, real valued, homogeneous of order one in $\xi$ with $\nabla_x \varphi \neq 0$ on the conic support of $a$. These assumptions guarantees the oscillatory integral (\ref{goc1}) is a well-defined Lagrangian distribution. 
The procedure presented in the following is based on the construction in  \cite[VIII.3]{Taylor2017}.

If we suppose
\[
(\partial_t - i \lambda(t,x, D_{x}) + r(t, x, D_t, D_x))v(t, x)  = \int c(t,x,\xi) e^{i\varphi(t, x, \xi)} \hat{v}_0(\xi) \diff \xi,
\]
then we have
\[
c(t, x, \xi) = i \varphi_t a + \partial_t a - ib + d,
\]
where
\[
b = e^{-i\varphi} \lambda (a e^{i\varphi}), \quad d = e^{-i\varphi} r (a e^{i\varphi})
\]
have the asymptotic expansions according to the Fundamental Lemma. In the following we use the version given in \cite{Treves1980}.
\begin{lm}\cite[Theorem 3.1]{Treves1980}\label{fundl}
    Suppose $\phi(x, \theta)$ is a smooth, real-valued function for $x \in \Omega, \theta \in S^n$ and the gradient $\nabla_x \phi \neq 0$. 
    Suppose $P(x, D_x)$ is a pseudodifferential operator of order $m$. 
    Write 
    $
    \phi(x, \theta) - \phi(y, \theta) = (x-y)\cdot \nabla \phi(y, \theta) - \phi_2(x,y),
    $
    where $\phi_2(x, y) = \mathcal{O}(|x-y|^2)$. Then we have the asymptotic expansion
    \begin{align}
    e^{-i\rho \phi} P(x, D_x) (a(x) e^{i\rho \phi}) \sim \sum_\alpha \frac{1}{\alpha!} \partial_\xi^\alpha P(x, \rho\nabla_x \phi) \mathcal{R}_\alpha (\phi; \rho, D_x) a, \text{ for }\rho >0,
    \end{align}
    where we use the notation 
    \begin{align}
    \mathcal{R}_\alpha (\phi; \rho, D_x) a = D_y^\alpha\{e^{i\rho \phi_2(x,y)} a(y)\}|_{y=x}.
    \end{align}
\end{lm}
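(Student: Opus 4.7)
The plan is to prove the expansion by representing $P$ through its full symbol $p(x,\xi)$ as an oscillatory integral, shifting the fiber variable to absorb the factor $e^{i\rho\phi}$, Taylor expanding the shifted symbol, and collapsing the resulting double integral by Fourier inversion. First I would write
\[
P(x,D_x)\bigl(a\, e^{i\rho\phi}\bigr)(x) = (2\pi)^{-n}\iint e^{i(x-y)\cdot\xi + i\rho\phi(y)}\, p(x,\xi)\, a(y)\, dy\, d\xi,
\]
multiply through by $e^{-i\rho\phi(x)}$, and use the stated decomposition to rewrite the combined phase as $(x-y)\cdot(\xi-\rho\nabla_y\phi(y)) + \rho\,\phi_2(x,y)$. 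The linear fiber change $\eta = \xi - \rho\nabla\phi(y)$ then renders the oscillatory factor $e^{i(x-y)\cdot\eta}$ non-degenerate with unique stationary point $(y,\eta)=(x,0)$, the amplitude becomes $p(x,\eta+\rho\nabla\phi(y))\,a(y)$, and the residual factor $e^{i\rho\phi_2(x,y)}$ is amplitude-like because $\phi_2=\mathcal{O}(|x-y|^2)$.

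Next I would Taylor expand the symbol in $\eta$ around $\eta=0$,
\[
p\bigl(x,\eta+\rho\nabla\phi(y)\bigr) \sim \sum_\alpha \frac{1}{\alpha!}\,\partial_\xi^\alpha p\bigl(x,\rho\nabla\phi(y)\bigr)\,\eta^\alpha,
\]
and use $\eta^\alpha e^{i(x-y)\cdot\eta} = (-1)^{|\alpha|} D_y^\alpha e^{i(x-y)\cdot\eta}$ together with integration by parts to move the $D_y^\alpha$ onto the remaining amplitude. Fourier inversion $(2\pi)^{-n}\iint e^{i(x-y)\cdot\eta}\,G(y)\,dy\,d\eta = G(x)$ then collapses the double integral to the diagonal, producing
\[
\frac{1}{\alpha!}\,D_y^\alpha\!\left[\partial_\xi^\alpha p\bigl(x,\rho\nabla\phi(y)\bigr)\, e^{i\rho\phi_2(x,y)}\, a(y)\right]_{y=x}
\]
for each multi-index $\alpha$. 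Finally I apply Leibniz to the product inside and single out the term in which every $D_y$ falls on $e^{i\rho\phi_2(x,y)}a(y)$; that term is precisely $\tfrac{1}{\alpha!}\,\partial_\xi^\alpha p\bigl(x,\rho\nabla\phi(x)\bigr)\cdot\mathcal{R}_\alpha(\phi;\rho,D_x)a$, matching the claimed $\alpha$-th contribution.

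The main obstacle is to verify that the formal series is a genuine asymptotic expansion in $\rho\to\infty$ in the sense of symbol classes, that is, that both the Taylor remainder in $\eta$ and the Leibniz leftovers (where some $D_y$ lands on $\partial_\xi^\alpha p(x,\rho\nabla\phi(y))$ and produces factors of $\rho\,\partial^2\phi\cdot\partial_\xi p$) can be absorbed into higher-$|\alpha|$ terms of the stated sum. Three order counts must be balanced: each $\partial_\xi$ lowers the symbol order of $p$ by one, each $D_y$ acting on $\rho\nabla\phi$ inside the $p$-slot produces a factor of $\rho$ balanced by a $\partial_\xi p$, and $\phi_2=\mathcal{O}(|x-y|^2)$ forces $D_y^\beta\{e^{i\rho\phi_2(x,y)}\}|_{y=x}$ to be of size at most $\rho^{|\beta|/2}$. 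A careful reindexing then slots every Leibniz leftover into some $|\beta|>|\alpha|$ principal term, while the Taylor tail contributes to symbol class $S^{m-N}$ via the non-stationary phase lemma applied in $(y,\eta)$ away from $(x,0)$. This combinatorial bookkeeping, rather than any single estimate, is the subtle part of the argument.
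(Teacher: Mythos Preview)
The paper does not prove this lemma; it is simply quoted as \cite[Theorem~3.1]{Treves1980} and used as a black box, so there is no ``paper's own proof'' to compare against. Your outline is the standard argument (essentially the one in Tr\`eves): represent $P$ by its symbol, absorb $e^{i\rho\phi}$ into the phase via the decomposition $\phi(x)-\phi(y)=(x-y)\cdot\nabla\phi(y)-\phi_2(x,y)$, shift $\xi\mapsto\eta=\xi-\rho\nabla\phi(y)$, Taylor expand $p(x,\eta+\rho\nabla\phi(y))$ in $\eta$, convert $\eta^\alpha$ to $D_y^\alpha$ by parts, and collapse with Fourier inversion. You have also correctly identified the genuine technical content, namely that the Leibniz cross-terms (where some $D_y$ hits $\partial_\xi^\alpha p(x,\rho\nabla\phi(y))$ and produces an extra $\rho\,\partial_\xi$) and the Taylor tail must be reabsorbed as higher-$|\alpha|$ contributions, using that $\mathcal{R}_\alpha$ is a polynomial in $\rho$ of degree $\le\lfloor|\alpha|/2\rfloor$ so that the $\alpha$-th term has net order $\le m-\lceil|\alpha|/2\rceil$. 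That bookkeeping is exactly what makes the series asymptotic in $\rho$, and your description of it is accurate.
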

\begin{remark}
    Indeed, we can write 
    \[
    \phi(x, \theta) - \phi(y, \theta) = (x-y) \cdot \nabla \phi(y,\theta) + (x-y)^T
    \big(\int_0^1 \int_0^1 (t\nabla_y^2 \phi(y+st(x-y), \theta) ) \diff s \diff t 
    \big) (x-y),
    \]
    which implies $-\phi_2(x,y)$ equals to the last term above.
    One can show $\mathcal{R}_\alpha (\phi; \rho, D_x) a$ is a polynomial w.r.t. $\rho$ with degree $\leq \left \lfloor{|\alpha|/2}\right \rfloor$.
    In particular, we have
    \begin{align*}
    &\mathcal{R}_\alpha (\phi; \rho, D_x) a = D_x^\alpha a(x), \quad  \text{ for } |\alpha| =1,\\
    &\mathcal{R}_\alpha (\phi; \rho, D_x) a  = D_x^\alpha a(x)  - i \rho D_x^\alpha \phi(x, \theta) a(x), \quad  \text{ for } |\alpha| =2.
    \end{align*}
\end{remark}
From Lemma \ref{fundl}, we have the following asymptotic expansions of $b, d$ by writing $(\tau, \xi) = \rho \theta, \varphi = \rho \phi(x, \theta)$, with $\rho >0, \theta \in S^{n+1}$. 
We use the notation $\lambda^{(\alpha)} = \partial_\xi^\alpha$ and $r^{(\alpha)} =  \partial_{(\tau, \xi)}^\alpha r$ to have
\begin{align*}
b &\sim 
\sum_\alpha \frac{1}{\alpha!} \lambda^{(\alpha)}(t,x,\rho \nabla_x \phi) \mathcal{R}_\alpha (\phi; \rho, D_x) a(t,x,\rho \theta)\\
&\sim  \underbrace{ \lambda(t,x,\rho \nabla_x \phi) a }_{\text{\color{blue} order $\leq 1$ }} +  \sum_{|\alpha|=1} \underbrace{ \lambda^{(\alpha)} (t,x,\rho \nabla_ \phi) D_x^\alpha a}_{\text{\color{blue} order $\leq 0$ }}\\
&+\sum_{|\alpha|=2}\frac{1}{\alpha!}(\underbrace{ \lambda^{(\alpha)}(t,x,\rho \nabla_x \phi)D_x^\alpha  a }_{\text{\color{blue} order $\leq -1$}}
- i \underbrace{  \lambda^{(\alpha)} (t,x,\rho \nabla_x \phi) (D_x^\alpha \varphi) a }_{\text{\color{blue} order $\leq 0$}}) + \underbrace{\ldots}_{\text{\color{blue} order $\leq -1$}},
\end{align*}
and
\begin{align*}
d & \sim  \sum_{\alpha} \frac{1}{\alpha!} r^{(\alpha)}(t,x,\rho \nabla_{t,x} \phi)
\mathcal{R}_\alpha (\phi; \rho, D_{t,x}) a(t,x,\rho \theta)\\
&\sim  \underbrace{ r(t,x,\rho \nabla_{t,x} \phi) a }_{\text{\color{blue} order $\leq 0$ }} +  \sum_{|\alpha|=1} \underbrace{ r^{(\alpha)}(t,x,\rho \nabla_{t,x} \phi) D_{t,x}^\alpha a}_{\text{\color{blue} order $\leq -1$ }}
+ \sum_{|\alpha|=2} \frac{1}{\alpha!} ( \underbrace{r^{(\alpha)}(t,x,\rho \nabla_{t,x} \phi)
    D_{t,x}^\alpha a}_{\text{\color{blue} order $\leq -2$}} \\
&- i \underbrace{ r^{(\alpha)}(t,x,\rho \nabla_{t,x} \phi) (D_{t,x}^\alpha \varphi) a)}_{\text{\color{blue} order $\leq -1$}} +
\underbrace{\ldots}_{\text{\color{blue} order $\leq -2$}}.
\end{align*}
Indeed for each fixed $\alpha$, the order of each term in the asymptotic expansion of $b$ is no more than $1-|\alpha| + \left \lfloor{|\alpha|/2}\right \rfloor$, that of $d$ is no more than $0-|\alpha| + \left \lfloor{|\alpha|/2}\right \rfloor$. 

To construct the microlocal solution, we are finding proper $a(t,x,\xi)$ in form of $\sum_{j \leq 0} a_j(t,x,\xi)$, where $a_j \in S^{-j}$ is homogeneous in $\xi$ of degree $-j$. We also write $b, c, d$ as asymptotic expansion such that
\[
c(t,x,\xi) \sim \sum_{j \leq 1} c_j(t,x,\xi) = (i \varphi_t  - i\lambda(t,x,\nabla_x \varphi) ) a + \sum_{j \leq 0} (\partial_t a_j - i b_j + d_j ),
\]
where we separate the term of order $1$ since it gives us the eikonal equation 
\begin{align}\label{eikonal}
c_1 = i (\varphi_t   - \lambda_1(t,x,\nabla_x \varphi))a = 0 \quad \Rightarrow \quad \varphi_t   = \lambda_1(t,x,\nabla_x \varphi), \qquad \text{ with } \varphi(0, x, \xi) = x \cdot \xi
\end{align}
for the phase function. 
Then equating the zero order terms in $\xi$ we have
\begin{align}\label{transport_h_a0}
\partial_t a_0 - ib_0 + d_0 =c_0 = 0  
\Rightarrow  X a_0 - \gamma a_0 = 0,
\qquad \text{ with } a_0(0,x,\xi) = 1,
\end{align}
where we set 
$X = \partial_t - \nabla_\xi \lambda_1 \cdot \nabla_x$ be the vector field
and 
\begin{align*}
\gamma = (i \lambda_0(t,x,\nabla_x \varphi ) +  \sum_{|\alpha|=2} \frac{1}{\alpha!} \lambda_1^{(\alpha)} D_x^\alpha \varphi  -r_0(t,x, \lambda_1,\nabla_x \varphi ).
\end{align*}
Then for lower order terms, i.e. $j \leq -1$, we have
\begin{align}\label{transport_h_aj}
0 = c_j = Xa_j - \gamma a_j  - e_j,
\qquad \text{ with } a_j(0,x,\xi) = 0,
\end{align}
where $e_j$ is expressible in terms of $\varphi, a_0, a_{-1}, \ldots, a_{j-1}, \lambda, r$. 
This finishes the construction in (\ref{goc1}).

\begin{remark}\label{rm_heq}
This construction of microlocal solution is valid in a small neighborhood of $t=0$,
since the Eikonal equation is locally solvable. 
However, we can find some $t_0 >0$ such that the solution $v$ is defined and use the value at $t_0$ as the Cauchy data to construct a new solution for $t>t_0$, for the same arguments see \cite[\S 3.1]{Stefanov2019}
\end{remark}
\subsubsection{Inhomogeneous equations when $r(t,x, D_t, D_x) = 0$.} 
Now we are going to solve the inhomogeneous equation with zero initial condition. 
A simpler case would be when the lower order term $r(t, x, D_t, D_x)$ vanishes, i.e.
\begin{equation}\label{iheq}
\begin{cases}
(\partial_t - i \lambda(t,x, D_{x})) w = g(t,x), \quad t >0 \\
w(0,x) = 0.
\end{cases}
\end{equation}
In this way the microlocal solution can be obtained by the Duhamel's principle.
Indeed, let the phase function $\varphi(t,x,\xi)$, the amplitude $a(t,x,\xi)$ to be constructed for 
solutions to the homogeneous first order hyperbolic equation with an initial condition as in (\ref{home_hyperbolic}).
More specifically, suppose the phase $\varphi(t,x,\xi)$ solves the eikonal equation (\ref{eikonal})
and the amplitude $a(t,x,\xi)  = a_0 + \sum_{j \leq -1}a_j$ solves the transport equation (\ref{transport_h_a0}) and (\ref{transport_h_aj}) with
$\gamma = (i \lambda_0 + \sum_{|\alpha|=2} \frac{1}{\alpha!} \lambda_1^{(\alpha)} D_x^\alpha \varphi)$.
Then the solution to (\ref{iheq}) up to a smooth error is given by 
\begin{align}\label{solution_r0}
w(t,x) 
= \int H(t-s) a(t-s, x, \xi) e^{i(\varphi(t-s, x, \xi) - y \cdot \xi)} l(s,y)  \diff y \diff \xi \diff s \equiv  L_{\varphi,a}l(t, x),
\end{align}
where we define $L_{\varphi, a}$ as the solution operator to (\ref{iheq}) with the phase $\varphi$ and the amplitude $a$. 
Here, the kernel of $L_{\varphi, a}$ 
\begin{align*}
k_L(t,x,s,y) =  H(t-s)\int  a(t-s, x, \xi) e^{i(\varphi(t-s, x, \xi) - y \cdot \xi)} l(s,y) \diff \xi 
\end{align*}
can be formally regarded as the product 
\begin{align}\label{def_kF}
k_L(t,x,s,y) =  H(t-s) k_F(t,x,s,y) 
\end{align}
of a conormal distribution $H(t-s)$ and a Lagrangian distribution 
where $k_F(t,x,s,y)$ is analyzed by the following claim.
\begin{claim}\label{op_F}
    The kernel $k_F$ defined by (\ref{def_kF}) is a Lagrangian distribution associated with the canonical relation 
    \begin{align}\label{C1}
    C_F = \{
    (
    \underbrace{
        t, x, 
        \varphi_t(t-s, x,\xi), \varphi_x(t-s, x,\xi),
        s, y,
        \varphi_t(t-s, x,\xi), \xi}_{t,x,\hat{t}, \hat{x}, s, y, \hat{s}, \hat{y}}
    ), \text{ with } y = \varphi_\xi(t-s, x, \xi)
    \}.
    \end{align}
    It is the kernel of an FIO $F_{\varphi,a}$ of order $-\frac{1}{2}$.
\end{claim}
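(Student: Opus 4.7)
The plan is to realize $k_F$ as a Lagrangian distribution associated with a non-degenerate phase function in the sense of Hörmander, and then read off its order from the standard formula. View $k_F$ as an oscillatory integral on $(X\times Y)\times\mathbb{R}^n_\xi$ with $X=Y=\mathbb{R}_t\times\mathbb{R}^n_x$, phase
\[
\Phi(t,x,s,y,\xi)=\varphi(t-s,x,\xi)-y\cdot\xi,
\]
and amplitude $a(t-s,x,\xi)\in S^0$. Real-valuedness and homogeneity of degree one in $\xi$ are inherited from $\varphi$ (see Subsection \ref{subsec_he}). For non-degeneracy, compute
\[
\partial_{\xi_j}\Phi=\partial_{\xi_j}\varphi(t-s,x,\xi)-y_j,
\]
so the critical set is $C_\Phi=\{y=\varphi_\xi(t-s,x,\xi)\}$. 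Each $d(\partial_{\xi_j}\Phi)$ contains $-dy_j$ with coefficient one, so the $n$ covectors $\{d(\partial_{\xi_j}\Phi)\}_{j=1}^{n}$ are linearly independent everywhere on $C_\Phi$. Moreover $d\Phi\neq 0$ on $C_\Phi$ because $\Phi_y=-\xi\neq 0$ on the conic support of $a$. Hence $\Phi$ is a non-degenerate phase function on the elliptic cone.

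Next I would parameterize the associated Lagrangian via the standard map $(t,x,s,y,\xi)\mapsto(t,x,\Phi_t,\Phi_x,s,y,\Phi_s,\Phi_y)$ restricted to $C_\Phi$. Using $\Phi_t=\varphi_t,\ \Phi_x=\varphi_x,\ \Phi_s=-\varphi_t,\ \Phi_y=-\xi$ this yields the wavefront Lagrangian
\[
\Lambda_\Phi=\bigl\{(t,x,\varphi_t,\varphi_x,s,y,-\varphi_t,-\xi)\ :\ y=\varphi_\xi(t-s,x,\xi)\bigr\}.
\]
Applying Hörmander's twist convention $C_F=\{(z,\zeta;z',-\zeta')\,:\,(z,\zeta,z',\zeta')\in\Lambda_\Phi\}$ that passes from the wavefront of a kernel to the canonical relation of the associated FIO from $\mathcal{E}'(Y)$ to $\mathcal{D}'(X)$ gives back exactly the set $C_F$ in (\ref{C1}), with the two copies of $\hat t=\hat s=\varphi_t$ reflecting the fact that the phase depends on $t-s$.

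For the order, $\dim(X\times Y)=2(n+1)$, the number of fiber variables is $N=n$, and the amplitude has order $\mu=0$. Hörmander's formula then gives
\[
\operatorname{ord}(k_F)=\mu+\frac{N}{2}-\frac{\dim(X\times Y)}{4}=0+\frac{n}{2}-\frac{n+1}{2}=-\frac{1}{2},
\]
which is the order of $F_{\varphi,a}$ as an FIO. The routine parts are bookkeeping; the only real obstacle is the sign/convention accounting that links $\Lambda_\Phi$ to $C_F$, and checking that $\nabla_x\varphi\neq 0$ holds throughout the region under consideration so that $\Phi$ is genuinely a phase function there, both of which are guaranteed by the construction of $\varphi$ in Subsection \ref{subsec_he} together with Remark \ref{rm_heq} ensuring that the construction can be patched in $t$.
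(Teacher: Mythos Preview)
Your argument is correct and is the standard verification: check that $\Phi(t,x,s,y,\xi)=\varphi(t-s,x,\xi)-y\cdot\xi$ is a non-degenerate phase, read off the parameterized Lagrangian, twist to get $C_F$, and apply the order formula $m=\mu+N/2-\dim(X\times Y)/4$ with $\mu=0$, $N=n$, $\dim(X\times Y)=2(n+1)$ to get $-1/2$. The paper in fact states this claim without proof, so there is nothing to compare against; your write-up supplies exactly the routine computation the paper leaves implicit.
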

\begin{remark}
    In Euclidean case, we have $\varphi(t,x, \xi) = t |\xi| + x \cdot \xi$. Then
    \[
    C_F = \{
    (t, x, |\xi|,\xi, s, y, 
    |\xi|,\xi), \text{ with } x = y - (t-s) \frac{\xi}{|\xi|}
    \}.
    \]
\end{remark}
Further, we show that $k_L$ is a distribution kernel such that the microlocal solution (\ref{solution_r0}) is well-defined for any $g(t,x) \in \mathcal{E}'((0, T) \times \mathbb{R}^n)$ supported in the elliptic region.
\begin{pp}
    The kernel $k_L$ is a well-defined distribution, 
    with the twisted wave front set satisfying
    $\text{\( \WF \)}' (k_L) \subset C_0 \cup C_\Delta \cup C_F$,
    where
    \begin{align*}
    &C_0 = \{
    (t,x,\hat{t}, \hat{x}, s,y, \hat{s}, \hat{y}) = 
    (
    {
        t,x, 
        \mu,0,
        t, y, 
        \mu,0
    }
    ), \ 
    \mu \neq 0
    \}, \\
    & C_\Delta = \{
    (t,x,\hat{t}, \hat{x}, s,y, \hat{s}, \hat{y}) = 
    (
    {t,x, \tau, \xi, t, x, \tau, \xi}
    ), \ (\tau, \xi) \neq 0
    \}.
    \end{align*}
    If $g(t,x)  \in \mathcal{E}'((0,T) \times \mathbb{R}^n)$ microlocally supported in the elliptic region, then $L_{\varphi,a}$ is a distribution with 
    $
    \text{\( \WF \)} (L_{\varphi,a}g) \subset \text{\( \WF \)} (g) \cup C_F \circ \text{\( \WF \)} (g),
    $
    where $C_F$ is defined in (\ref{C1}).
\end{pp}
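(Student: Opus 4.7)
The plan is to realize the kernel as the product $k_L = H(t-s)\, k_F(t,x,s,y)$ of two distributions and then appeal to H\"ormander's product theorem. First I would identify the wave front set of $H(t-s)$: since this distribution depends only on $t-s$ and is a pullback of the Heaviside function on $\mathbb{R}$, its singular support is $\{t=s\}$ and its (untwisted) wave front set consists of conormals to this hypersurface, namely $\{(t,x,s,y,\mu,0,-\mu,0) : t=s,\ \mu\neq 0\}$; twisting the last block of covectors yields precisely $C_0$. By the preceding claim $\WF'(k_F) \subset C_F$, so the untwisted wave front of $k_F$ consists of covectors of the form $(\varphi_t, \varphi_x, -\varphi_t, -\xi)$ evaluated at $(t-s,x,\xi)$, at points satisfying $y = \varphi_\xi(t-s,x,\xi)$.

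To apply the product theorem I would verify the non-cancellation hypothesis at any point where both factors are singular. Using the initial condition $\varphi(0,x,\xi) = x\cdot \xi$, such points satisfy $t=s$ and $y = \varphi_\xi(0,x,\xi) = x$. The sum of a covector from each family is $(\mu+\varphi_t,\ \varphi_x,\ -\mu-\varphi_t,\ -\xi)$, which vanishes only if $\xi = 0$; this is excluded on the conic support of the amplitude $a$, so $k_L$ is a well-defined distribution. H\"ormander's theorem then bounds $\WF(k_L)$ by the union of $\WF(H(t-s))$, $\WF(k_F)$, and the pointwise sum $\mathcal{S}$ at intersection points. Using $\varphi_x(0,x,\xi) = \xi$ and writing $\tau = \mu + \varphi_t(0,x,\xi)$, a typical element of $\mathcal{S}$ takes the untwisted form $(\tau,\xi,-\tau,-\xi)$ at $(t,x,t,x)$; after twisting this becomes $(\tau,\xi,\tau,\xi)$ at $(t,x,t,x)$, which is exactly $C_\Delta$. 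This establishes $\WF'(k_L) \subset C_0 \cup C_\Delta \cup C_F$.

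For the second statement I would interpret $L_{\varphi,a}$ as the operator with kernel $k_L$ and invoke the standard composition bound
\[
\WF(L_{\varphi,a} g) \subset (C_0 \cup C_\Delta \cup C_F) \circ \WF(g).
\]
The $C_0$-contribution vanishes because elements of $C_0$ require the right-hand covector to be of the form $(\sigma,0)$, whereas the elliptic region hypothesis $\sigma^2 < c_s^2|\eta|_g^2$ on $\WF(g)$ forces $\eta\neq 0$. The $C_\Delta$-contribution equals $\WF(g)$, and the $C_F$-contribution is the propagated set $C_F\circ \WF(g)$. The hard part I anticipate is not the wave front bound itself but making precise that $L_{\varphi,a} g$ is a genuine distribution rather than only microlocally defined: one must check that the composition of $\WF'(k_L)$ with $\WF(g)$ is admissible and that the $(s,y)$-integration is proper on the support of $g$. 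Here I would use the compact support of $g$ in $(0,T)\times \mathbb{R}^n$ together with the forward finite-speed property of the bicharacteristic flow of $\partial_t - i\lambda(t,x,D_x)$ on this bounded time interval to ensure the integral defines a distribution on $\mathbb{R}_t\times\mathbb{R}^n_x$.
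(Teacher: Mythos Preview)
Your approach is essentially the same as the paper's: the paper also treats $k_L$ as the product $H(t-s)\,k_F$ and invokes H\"ormander's product theorem (cited as \cite[Theorem 8.2.10]{Hoermander2003}) for the wave front bound on $k_L$, then the standard kernel composition bound (cited as \cite[Theorem 2.5.14]{Hoermander1971}) together with the elliptic-region hypothesis to discard the $C_0$ contribution. Your write-up is more explicit in verifying the non-cancellation hypothesis and in identifying the sum set with $C_\Delta$ via $\varphi(0,x,\xi)=x\cdot\xi$, but the logical skeleton matches the paper's proof.
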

\begin{proof}
    We use \cite[Theorem 8.2.10]{Hoermander2003}, with the assumption that the principal symbol of $\lambda(t,x, D_x)$ is homogeneous in $\xi$ of order one.
    By \cite[Theorem 2.5.14]{Hoermander1971}, for $ g(t, x) \in \mathcal{E}'((0,T) \times \mathbb{R}^n)$, since $\text{\( \WF \)} (k_L)$ has no zero sections, we have
    \[
    \text{\( \WF \)} (L_{\varphi,a} g) \subset C_0 \circ  \text{\( \WF \)}(g) \cup \text{\( \WF \)}(g) \cup  C_F \circ \text{\( \WF \)}(g).
    \]
    In particular, the first term in the right side is ignorable in general. 
    However, if we assume there is no $(t, x, \mu, 0) \in \text{\( \WF \)} (g)$, which is true if $g$ is microlocally supported in the elliptic region,
    then for $w$ satisfying equation (\ref{iheq}), 
    the wave front set
    $
    \text{\( \WF \)} (w) \subset \text{\( \WF \)} (g) \cup C_F \circ \text{\( \WF \)} (g). 
    $
    Especially for $t \geq T$, we have
    $ \text{\( \WF \)} (w) \subset
    C_F \circ \text{\( \WF \)} (g). $
\end{proof}

\subsubsection{Inhomogeneous equations with nonzero $r(t,x, D_t, D_x)$.} 
When the lower order term $r(t, x, D_t, D_x)$ is nonzero, the Duhamel's principle does not work any more. Instead, we can use the same iterative procedure as in \cite[Section 5]{Treves1980} to construct an operator $\tilde{e}(t,x, D_t, D_x) \in \Psi^{-1}$ such that  
\begin{align}\label{etilde}
&\partial_t - i \lambda(t,x, D_{x}) + r(t, x, D_t, D_x) \nonumber\\
=& (I - \tilde{e})(\partial_t - i \lambda(t,x, D_{x}) + \sum_{j \leq 0} \tilde{r}_j(t, x, D_x)) \mod \Psi^{-\infty}.
\end{align}
%
%
Here each $\tilde{r}_j(t, x, D_x)$ has a classical symbol so does their sum. 
In particular, the principal symbol of $\tilde{r}_0(t,x,D_x)$ is $r_0(t, x, \lambda_1(t,x,\xi), \xi)$.
The similar trick is performed for $\lambda$-pseudodifferential operators in  \cite{Stefanov2000}.

In this way, the microlocal solution to the inhomogeneous hyperbolic equation can be written as
\begin{align}
w(t,x) 
&= L_{\varphi, a} (I - \tilde{e})^{-1} g  \label{def_L} \\
&= \int H(t-s) a(t-s, x, \xi) e^{i(\varphi(t-s, x, \xi) - y \cdot \xi)} ((I - \tilde{e})^{-1}g)(s,y)  \diff y \diff \xi \diff s \label{solution_ih_r0}
\end{align}
where $L_{\varphi,a}$ is the solution operator of the inhomogeneous first order hyperbolic equation $(\partial_t - i \lambda(t,x, D_{x}) + \sum_{j=0} \tilde{r}_j(t, x, D_x)) v = g$ with zero initial condition.
Since $(I - \tilde{e})$ is an elliptic $\Psi$DO with principal symbol equal to $1$, we have the same conclusion for the wave front set of $w$ as the simpler case. 
\begin{pp}\label{prop_solution_ih}
    Assume $g(t,x') \in \mathcal{E}'((0,T) \times \mathbb{R}^2)$ microlocally supported in the elliptic region.
    Then the inhomogeneous first-order hyperbolic equation (\ref{iheq_r})    
    admits a unique microlocal solution given by (\ref{solution_ih_r0}),
    where the phase function $\varphi(t,x,\xi)$ and the amplitude $a(t,x,\xi)$ are constructed for the operator $(\partial_t - i \lambda(t,x, D_{x}) + \sum_{j=0} \tilde{r}_j(t, x, D_x))$ in (\ref{etilde}), as in Subsection \ref{subsec_he}. 
    More specifically, the phase $\varphi(t,x,\xi)$ solves the eikonal equation (\ref{eikonal});
    the amplitude $a(t,x,\xi)  = a_0 + \sum_{j \leq -1}a_j$ solves the transport equation (\ref{transport_h_a0}) and (\ref{transport_h_aj}) with
    $\gamma  = (i\lambda_0(t,x,\xi) +  \sum_{|\alpha|=2} \frac{1}{\alpha!} \lambda_1^{(\alpha)} D_x^\alpha \varphi -r_0(t,x,\lambda_1, \xi))$.
\end{pp}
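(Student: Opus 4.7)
The plan is to package the construction already laid out in the preceding subsections into a proof of the proposition, with the three main ingredients being the factorization (\ref{etilde}), Duhamel's principle applied to the resulting reduced equation, and a uniqueness argument via propagation of singularities. First I would invoke the factorization (\ref{etilde}) to rewrite the original equation $(\partial_t - i\lambda + r)w = g$ as
\[
(\partial_t - i\lambda + \textstyle\sum_{j \le 0}\tilde{r}_j)\,w \;=\; (I-\tilde{e})^{-1} g \mod C^\infty,
\]
noting that $(I-\tilde{e})$ is an elliptic $\Psi$DO of order zero with principal symbol $1$ and hence microlocally invertible, and that the new lower-order operator $\sum_{j}\tilde{r}_j(t,x,D_x)$ contains no $D_t$, so the reduced equation is of exactly the type to which the homogeneous construction of Subsection \ref{subsec_he} applies.

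Next I would apply Duhamel's principle to this reduced equation with zero initial data. For each fixed $s \in (0,T)$, the homogeneous Cauchy problem
\[
(\partial_t - i\lambda + \textstyle\sum_j \tilde{r}_j)v = 0, \quad v|_{t=s} = ((I-\tilde{e})^{-1}g)(s,\cdot)
\]
is solved microlocally by the oscillatory integral (\ref{goc1}) with the phase $\varphi(t-s,x,\xi)$ obeying (\ref{eikonal}) shifted to start at $t=s$ and the amplitude $a(t-s,x,\xi)$ obeying (\ref{transport_h_a0})--(\ref{transport_h_aj}). Because the principal symbol of the new lower-order operator is $\tilde{r}_0(t,x,\xi)=r_0(t,x,\lambda_1(t,x,\xi),\xi)$, the transport coefficient takes exactly the form $\gamma=i\lambda_0+\sum_{|\alpha|=2}\tfrac{1}{\alpha!}\lambda_1^{(\alpha)} D_x^\alpha\varphi - r_0(t,x,\lambda_1,\xi)$ claimed in the statement. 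Integrating these homogeneous solutions in $s$ against the Heaviside function $H(t-s)$ yields formula (\ref{solution_ih_r0}), and one checks by a direct differentiation (using the transport equations to cancel all terms of nonpositive order and the eikonal equation to cancel the top-order term) that this $w$ satisfies the reduced equation modulo $C^\infty$ and vanishes for $t<0$.

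For the wave front set of $w$, I would cite the analysis of $k_L$ carried out in the previous subsection: since $g$ is microlocally supported in the elliptic region, the $C_0$ component of $\WF'(k_L)$ contributes nothing, so $\WF(L_{\varphi,a}h)\subset \WF(h)\cup C_F\circ \WF(h)$ for $h=(I-\tilde{e})^{-1}g$; this in turn has the same wave front set as $g$ because $(I-\tilde{e})^{-1}$ is a $\Psi$DO of order zero, and the same elliptic region assumption transports through. For uniqueness, I would argue that any two microlocal solutions $w_1,w_2$ have a difference that satisfies the homogeneous equation with vanishing initial data modulo $C^\infty$; by the geometric optics construction of Subsection \ref{subsec_he} applied to zero Cauchy data, all amplitudes $a_j$ solve transport equations with zero initial condition, so $w_1-w_2\in C^\infty$.

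The main obstacle I expect is the bookkeeping in the Duhamel step: one must keep track of the two distinct roles played by the time variable $t$ (as the evolution variable) and by $t-s$ (as the variable in which $\varphi$ and $a$ are constructed) when verifying that (\ref{solution_ih_r0}) really does solve the reduced equation. This amounts to commuting $\partial_t$ past the $H(t-s)$ factor (producing a $\delta(t-s)$ boundary term that recovers the source $g$ via the identity $a(0,x,\xi)=1$ and $\varphi(0,x,\xi)=x\cdot\xi$) and then using the transport and eikonal equations to kill the remaining oscillatory integrand. The factorization (\ref{etilde}) itself is taken for granted from \cite{Treves1980}, but verifying that it preserves the classical-symbol structure of $r$ requires recalling that the iteration there produces $\tilde{r}_j\in\Psi^{-j}$ with classical symbols once $\lambda$ is elliptic with a classical symbol, which has been recorded in the remark after (\ref{def_lambda}).
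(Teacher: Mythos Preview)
Your construction of the parametrix is fine and matches the paper: the factorization (\ref{etilde}) reduces to a lower-order term with no $D_t$, Duhamel's principle then gives (\ref{solution_ih_r0}), and the transport coefficient $\gamma$ comes out as stated. This is exactly the content of the subsections preceding the proposition, and the paper's proof takes all of that as already established.

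The gap is in your uniqueness argument. You write that if $w_1-w_2$ solves the homogeneous equation with smooth Cauchy data, then ``by the geometric optics construction of Subsection \ref{subsec_he} applied to zero Cauchy data, all amplitudes $a_j$ solve transport equations with zero initial condition, so $w_1-w_2\in C^\infty$.'' This is circular. The geometric optics construction produces \emph{one} distribution with the required properties; it does not show that every distribution solving the equation modulo $C^\infty$ with smooth data is itself smooth. Knowing that the parametrix with zero data is smooth tells you nothing about $w_1-w_2$ unless you already know the parametrix agrees with the true solution up to a smooth error --- which is precisely the ``justification of the parametrix'' step you are trying to prove.

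The paper closes this gap differently: it observes that after the reduction (\ref{etilde}) the operator $\partial_t - i\lambda(t,x,D_x) + r(t,x,D_x)$ is symmetric hyperbolic in the sense of \cite{Taylor2017}, and then invokes the standard energy estimates from that reference to conclude that smooth data and smooth forcing give a smooth solution. This is an analytic input independent of the oscillatory-integral ansatz, and it is what actually pins down uniqueness of the microlocal solution. Your propagation-of-singularities remark earlier in the proposal points in the right direction (it would also give the needed regularity statement), but the specific argument you wrote for uniqueness does not.
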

\begin{proof}
    To justify the parametrix, we still need to show that if $w$ is the solution to (\ref{iheq_r}) with $g  \in C^\infty$ and $w(0,x) \in C^\infty$,   
    then $w \in C^\infty$ as well. 
    By (\ref{etilde}) it suffices to show this is true when the lower order term $r(t,x,D_t,D_x)$ can be reduced to the form $r(t,x,D_x)$ or vanishes. One can verify that the operator $\partial_t - i \lambda(t,x, D_{x}) + r(t, x, D_x)$ is symmetric hyperbolic as is defined in \cite{Taylor2017}. Then following the same arguments there, by a standard hyperbolic estimates, one can show $w$ is smooth.
\end{proof}
\subsection{The Cauchy problem and the polarization}\label{section_R_cauchy}
In this subsection, before assuming the source $l \in \mathcal{E}'((0,T) \times \Gamma) $ and solving the inhomogeneous equation (\ref{dn_eq}) with zero initial condition, we first assume that the source exists for a limited time for $t<0$ and we have the Cauchy data $f|_{t=0}$ at $t=0$, i.e.
\begin{equation}\label{dn_cauchy}
\Lambda f = 0, \text{ for } t > 0, \quad 
f|_{t=0} \text{ given }.
\end{equation}
Recall the diagonalization of $\Lambda$ in (\ref{def_fl}, \ref{hyperbolic_eq}). The homogeneous equation $\Lambda f=0$ implies 
\begin{align}\label{eg_fh}
f = \widetilde{W} h
= \begin{pmatrix}
\widetilde{W}_{11}\\
\widetilde{W}_{21}\\
\widetilde{W}_{31}\\
\end{pmatrix} h_1 \mod C^\infty
\Leftarrow h_2 = h_3 =0 \mod C^\infty,
\end{align}
where $h_1$ solves the homogeous first-order hyperbolic equation in (\ref{hyperbolic_eq}).
Notice in this case the hyperbolic operator is $\partial_t - i c_R(t,x'){{|\xi'|}_g}  + r(t, x', D_t, D_{x'})$ with $r$ given by (\ref{def_lambda}).
If we have the initial condition $h_{1,0}\equiv h_1|_{t=0}$, then by the construction in Subsection \ref{subsec_he}, then
\begin{align}\label{eg_h1}
h_1(t,x') = \int a(t,x',\xi') e^{i\varphi(t,x',\xi')}  \hat{h}_{1,0}(\xi') \diff \xi' \mod C^\infty,
\end{align}
where the phase function $\varphi$ solves the eikonal equation (\ref{eikonal});
the amplitude $a  = a_0 + \sum_{j \leq -1}a_j$ solves the transport equation (\ref{transport_h_a0}) and (\ref{transport_h_aj}) with
$
\gamma  = ( \sum_{|\alpha|=2} \big(\frac{1}{\alpha!} c_R(t,x')D_{\xi'}^\alpha{{|\xi'|}_g}  D_{x'}^\alpha \varphi-r_0(t,x',\lambda_1, \nabla_{x'}\varphi)\big).
$

To find out how the initial condition of $f$ is related to that of $h$, we plug (\ref{eg_h1}) into (\ref{eg_fh}), use the Fundamental Lemma in Lemma \ref{fundl}, and set $t=0$. 
Since $\varphi(0,x',\xi') = x' \cdot \xi'$, after these steps we get three $\Psi$DOs of order zero, of which the symbols can be computed from the Fundamental Lemma, such that
\begin{align}\label{compatible}
f|_{t=0} = 
\begin{pmatrix}
\widetilde{W}_{11,0}\\
\widetilde{W}_{21,0}\\
\widetilde{W}_{31,0}\\
\end{pmatrix} 
h_{1,0} \mod C^\infty.
\end{align}
In particular, the principal symbols are 
\begin{align*}
&\sigma_p(\widetilde{W}_{11,0}) = \sigma_p({W}_{11}) (0, x', c_R{{|\xi'|}_g} , \xi'), \qquad
\sigma_p(\widetilde{W}_{21,0}) = \sigma_p({W}_{21})(0, x', c_R{{|\xi'|}_g} , \xi'),\\
&\sigma_p(\widetilde{W}_{31,0})= \sigma_p({W}_{31})(0, x', c_R{{|\xi'|}_g} , \xi') 
\end{align*}
and by (\ref{W}) they are elliptic $\Psi$DOs. 
This indicates not any arbitrary initial conditions can be imposed for (\ref{dn_cauchy}). 
Instead, to have a compatible system, we require that there exists some distribution $h_0$ such that
$f(0,x')$ can be written in form of (\ref{compatible}). 
\begin{thm}\label{thm_cauchy}
    Suppose $f(0,x')$ satisfies (\ref{compatible}) with some $h_{1,0} \in \mathcal{E}'$. Then microlocally the homogeneous problem with Cauchy data (\ref{dn_cauchy}) admits a unique microlocal solution
    \begin{align}\label{solution_cauchy}
    f &= \begin{pmatrix}
    \widetilde{W}_{11}\\
    \widetilde{W}_{21}\\
    \widetilde{W}_{31}\\
    \end{pmatrix}
    \int a(t,x',\xi') e^{i\varphi(t,x',\xi')}  \hat{h}_{1,0}(\xi') \diff \xi' \mod C^\infty \nonumber \\
    &=  \int
    \underbrace{ 
    \begin{pmatrix} 
    i \mu \theta(c_R) \nabla_{x_1} \varphi /{{|\nabla_{x'} \varphi |}_g}  \\
    i \mu \theta(c_R)\nabla_{x_2} \varphi /{{|\nabla_{x'} \varphi |}_g} \\
    b(c_R)\rho c_R^2(t,x') 
    \end{pmatrix}
    \frac{a_0(t,x',\xi') e^{i\varphi(t,x',\xi')}}{k_1(c_R)}
    }_{\mathcal{P}}
    \hat{h}_{1,0}(\xi') \diff \xi'
    + \text{ lower order terms},
    \end{align}
    where $c_R$ is the Rayleigh speed, $b(s), \theta(s), k_1(s)$ are defined in (\ref{def_ab}), and $a(t,x',\xi')$ is the amplitude from the geometric optics construction with the highest order term $a_0(t,x',\xi')$.
\end{thm}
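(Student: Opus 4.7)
The plan is to exploit the microlocal diagonalization of $\Lambda$ established above to reduce (\ref{dn_cauchy}) to a scalar homogeneous first-order hyperbolic equation for $h_1$, and then to recover the explicit polarization vector by pushing the geometric optics ansatz for $h_1$ through the elliptic $\Psi$DO $\widetilde{W}$ via the Fundamental Lemma (Lemma \ref{fundl}).

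First I would set $h = \widetilde{W}^{-1} f$ and apply $\widetilde{W}^{-1}$ to $\Lambda f = 0$ to obtain the diagonalized system (\ref{hyperbolic_eq}) with zero right-hand side. Since $m_2, m_3 > 0$ near $\Sigma_R$, the operators $\widetilde{m}_2 + r_2$ and $\widetilde{m}_3 + r_3$ are microlocally elliptic on the wave front set of the data, so $h_2$ and $h_3$ are smooth microlocally. This leaves the homogeneous first-order hyperbolic equation for $h_1$ whose microlocal solution is produced by the geometric optics construction of Subsection \ref{subsec_he}, namely (\ref{eg_h1}), once an initial datum $h_{1,0}$ is prescribed. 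The compatibility assumption on $f(0,x')$ supplies this datum: reading off the first column of $W$ in (\ref{W}) one checks that the principal symbols of $\widetilde{W}_{j1,0}$ on $\Sigma_R$ are nonvanishing (for instance the third component gives $b(c_R)\rho c_R^2/k_1(c_R) \neq 0$), so at least one of these scalar operators is microlocally elliptic and can be inverted to recover $h_{1,0}$ from $f|_{t=0}$ modulo $C^\infty$. Uniqueness follows by applying the same inversion to the difference of two putative solutions and then using the uniqueness part of the homogeneous hyperbolic construction.

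Next I would produce the formula (\ref{solution_cauchy}) by substituting the geometric optics representation of $h_1$ into $f = \widetilde{W} h$ modulo $C^\infty$, using only the action of the first column of $\widetilde{W}$ on $h_1$. Applying $\widetilde{W}$ to the Lagrangian distribution $\int a(t,x',\xi')e^{i\varphi(t,x',\xi')} \hat{h}_{1,0}(\xi')\, d\xi'$ and invoking Lemma \ref{fundl} with phase $\varphi$, the leading term is obtained by evaluating the principal symbol of the first column of $\widetilde{W}$ at $(t,x',\varphi_t,\nabla_{x'}\varphi)$ and multiplying by $a_0 e^{i\varphi}$. The eikonal equation (\ref{eikonal}) for this system reads $\varphi_t = c_R(t,x')|\nabla_{x'}\varphi|_g$, so the evaluation takes place on $\Sigma_R$, where $m_1 = 0$. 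Using the rescalings in (\ref{def_ab}) one has $\alpha = a(c_R)|\xi'|_g$, $\beta = b(c_R)|\xi'|_g$, $\theta = \theta(c_R)|\xi'|_g^2$, $k_1 = k_1(c_R)|\xi'|_g^3$, and after making the substitutions $\xi_j \to \nabla_{x_j}\varphi$ and $|\xi'|_g \to |\nabla_{x'}\varphi|_g$ the first column of $W$ from (\ref{W}) collapses to exactly the vector $\mathcal{P}$ displayed in (\ref{solution_cauchy}).

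The main obstacle I expect is tracking the evaluation on $\Sigma_R$ consistently through the Fundamental Lemma: the entries of $W$ are ratios whose numerators and denominators each carry vanishing or near-vanishing factors, and the identity $m_1|_{\Sigma_R} = 0$ has to be used explicitly to kill the $-m_1/k_1$ contribution in $\widetilde{W}_{31}$ so that the clean polarization vector emerges. All remaining contributions, namely lower-order terms produced by Lemma \ref{fundl}, the subprincipal part of $\widetilde{W}$, and the corrections $a_{-1}, a_{-2}, \ldots$ from (\ref{transport_h_aj}), are bundled into the ``lower order terms'' in (\ref{solution_cauchy}); these are classical symbols and contribute nothing to the principal polarization statement.
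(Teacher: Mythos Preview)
Your proposal is correct and follows essentially the same approach as the paper: the argument preceding the theorem already sets up the diagonalization (\ref{eg_fh}), the ellipticity of $\widetilde m_2+r_2,\ \widetilde m_3+r_3$ forcing $h_2,h_3\in C^\infty$, and the geometric optics representation (\ref{eg_h1}) of $h_1$, and then the polarization vector is read off by applying the first column of $\widetilde W$ via Lemma~\ref{fundl} and evaluating on $\Sigma_R$ where $m_1=0$. Your explicit tracking of the rescalings from (\ref{def_ab}) and the use of $m_1|_{\Sigma_R}=0$ to simplify $\widetilde W_{31}$ are exactly what the paper does implicitly, and your remark on recovering $h_{1,0}$ from the ellipticity of the $\widetilde W_{j1,0}$ for uniqueness is a slight elaboration of the paper's observation that these operators are elliptic.
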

With $f$, one can construct the real displacement $u$ as an evanescent mode. 
Notice this theorem gives us a local representation of $f$ in the sense of Remark \ref{rm_heq}.

To understand the polarization of microlocal solution in the theorem above, if we write the real and imaginary part of the term $\mathcal{P}$ separately, then we have
\begin{align*}
&\Re(\mathcal{P})
= 
\frac{|a_0(t,x',\xi')|}{k_1(c_R)}
\begin{pmatrix} 
\mu \theta(c_R) \cos(\varphi + \upsilon + \pi/2) \nabla_{x_1} \varphi /{{|\nabla_{x'} \varphi |}_g}   \\
\mu \theta(c_R) \cos (\varphi + \upsilon + \pi/2) \nabla_{x_2} \varphi /{{|\nabla_{x'} \varphi |}_g} \\
b(c_R)\rho c_R^2(t,x') \sin(\varphi + \upsilon + \pi/2)
\end{pmatrix},\\
&\Im(\mathcal{P})
=
\frac{|a_0(t,x',\xi')|}{k_1(c_R)}
\begin{pmatrix} 
 \mu \theta(c_R) \cos (\varphi + \upsilon)\nabla_{x_1} \varphi /{{|\nabla_{x'} \varphi |}_g}  \\
 \mu \theta(c_R) \cos (\varphi + \upsilon) \nabla_{x_2} \varphi /{{|\nabla_{x'} \varphi |}_g} \\
b(c_R)\rho c_R^2(t,x') \sin (\varphi + \upsilon)
\end{pmatrix},
\end{align*}
where we assume $\upsilon = \arg(a_0(t,x',\xi'))$. 
It follows that the solution in (\ref{solution_cauchy}) can be regarded as a superposition of $\Re(\mathcal{P})$ and $\Im(\mathcal{P})$. We are going to show that each of them has a \textbf{retrograde elliptical motion} in the following sense.

Take the real part $\Re(\mathcal{P})$ as an example, by assuming $\hat{h}_{1,0}(\xi')$ is real. Then the real part of the displacement $f(t,x')$ on the boundary is the superposition of 
$
\Re(\mathcal{P})
\equiv
(
p_1,
p_2,
p_3
)^T
$.
On the one hand, we have the following equation satisfied by the components
\begin{align*}
\frac{p_1^2}{|\mu \theta(c_R)|^2} +  \frac{p_2^2}{|\mu\theta(c_R)|^2} + \frac{p_3^2}{|b(c_R)\rho c_R^2 |^2} = \frac{|a_0(t,x',\xi')|^2}{k_1^2(c_R)} 
\end{align*}
which describes an ellipsoid.

On the other hand, the rotational motion of the particle has the direction opposite to its orbital motion, i.e. the direction of the propagation of singularities. 
Indeed, if we consider the Euclidean case, 
we have $a_0(t,x', \xi')  = 1 $ and $\varphi(t,x',\xi') = t|\xi'| + x' \cdot \xi'$.
It follows that $\upsilon = 0$.
Locally the singularities propagate 
along the path 
$
 -\frac{\xi'}{|\xi|} t + y'
$
in the direction of $(-\xi', 0)$ on the boundary $x^3 = 0$, where $y'$ is the initial point.
W.O.L.G. assume $\xi^1, \xi^2 >0$ in what follows.
In this case, we have
\begin{align*}
\Re(\mathcal{P})
=\frac{1}{k_1(c_R)}
\begin{pmatrix} 
\mu \theta(c_R)  \cos (t|\xi'| + x' \cdot \xi' + \pi/2)  \xi_1/{{|\xi'|}}   \\
\mu \theta(c_R)  \cos (t|\xi'| + x' \cdot \xi' + \pi/2)  \xi_2 /{{|\xi'|}} \\
b(c_R)\rho c_R^2  \sin (t|\xi'| + x' \cdot \xi' + \pi/2) 
\end{pmatrix}.
\end{align*}   
Notice $t|\xi'| + x' \cdot \xi' + \pi/2$ is the angle between the vector $(p_1, p_2, p_3)^T$, as the direction of strongest singularity of the solution, and the plane $x^3=0$ as in Figure \ref{fig_po}.
At each fixed point, the polarized vector $(p_1, p_2, p_3)^T$ rotates in the direction that the angle increases as $t$ increases, i.e. the clockwise direction, while the singularities propagates in the counterclockwise direction.
Therefore, we have a retrograde motion. 
\begin{figure}[h]
    \centering
    \includegraphics[height=0.2\textwidth]{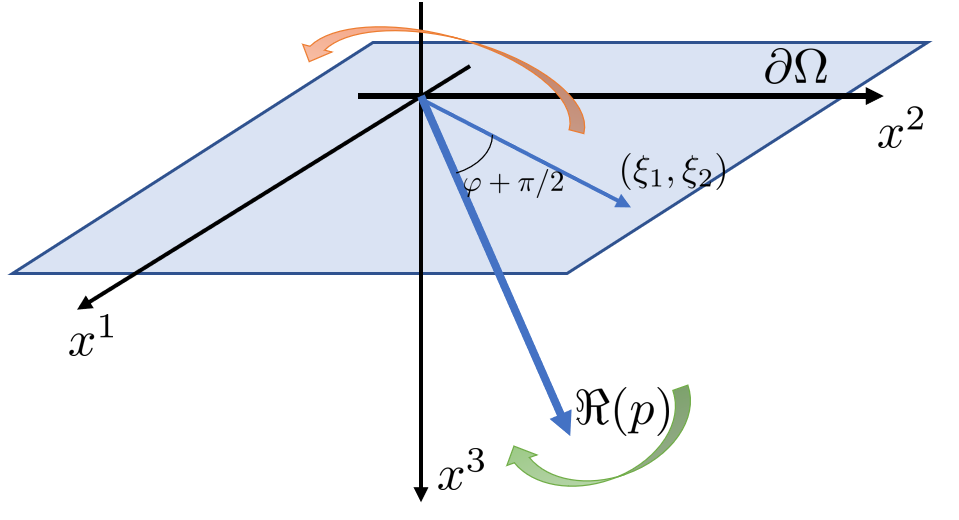}
    \caption{Propagation of the wave and the rotation of the polarization.} \label{fig_po}
\end{figure}


In the general case, notice by (\ref{transport_h_a0}) the leading amplitude satisfies
\begin{align*}
X a_0 - \gamma a_0 = 0,
\qquad \text{ with } a_0(0,x',\xi') = 1,
\end{align*}
where 
\begin{align*}
X = \partial_t - \nabla_{\xi'} \lambda_1 \cdot \nabla_{x'}, \quad \gamma = (\sum_{|\alpha|=2} \frac{1}{\alpha!} \lambda_1^{(\alpha)} D_{x'}^\alpha \varphi  -r_0(t,{x'}, \lambda_1,\nabla_{x'} \varphi ),
\end{align*}
with $\lambda_1 = c_R|\xi'|_g$. 
In the following assume the Rayleigh speed $c_R$ only depends on the space variable $x'$ and we write $\tilde{g} = c^{-2}_R g$. 
Let 
\[
\gamma = \gamma_1(t,x',\xi')  + \gamma_2 (t,x',\xi') i
\] be the decomposition of the real and imaginary part. 
We emphasize that the imaginary part $\gamma_2(t,x',\xi')$ comes from that of $r_0(t,{x'}, \lambda_1,\nabla_{x'} \varphi )$, the lower order term in the decoupled system, and it is a classical symbol of order zero.

The integral curve of $X$ is the unit speed geodesic $(t, \sigma_{y',-\xi'/|\xi'|_{\tilde{g}}} (t))$ and for simplification we omit the initial point $y'$ and the initial direction $-\xi'/|\xi'|_{\tilde{g}}$ to write it as $(t, x'(t))$ for the moment.
Then the solution of the transport equation is 
\begin{align*}
a_0(t,x'(t),\xi') = e^{-\int_0^t \gamma (s, x'(s), \xi') \diff s} = e^{-\int_0^t \gamma_1 (s, x'(s), \xi') \diff s} \cdot e^{-i \int_0^t \gamma_2 (s, x'(s), \xi') \diff s},
\end{align*}
which implies $\upsilon = -\int_0^t \gamma_2 (s, x'(s), \xi') \diff s$ and $\upsilon$ is a classical symbol of order zero.
To find out at each fixed point $x'$ how the polarization  rotates as the time changes, we compute the time derivative 
\begin{align}\label{angle_time}
(\varphi + \upsilon)_t = |\nabla_{x'} \varphi(t, x',\xi')|_{\tilde{g}} + \upsilon_t,
\end{align}
which is positive for $\xi'$ large enough since $ \upsilon_t$ is a symbol of order zero. 
This indicates for large $\xi'$ the angle $\varphi + \upsilon$ increases as $t$ increases and the rotation of the polarization vector is still clockwise, if we assume at the fixed point $\nabla_{x'} \varphi $ points in the direction that $x^1, x^2$ increase.

In this case, the singularities propagates along the null characteristics of $\partial_t - \lambda_1$.
Particularly, the wave propagates along the geodesics $\sigma_{y',-\xi'/|\xi'|_{\tilde{g}}} (t)$, 
where $y'$ is the initial point. 
By Remark \ref{solve_eikonal}, along the geodesics we have $\nabla_{x'} \varphi = - |\xi'|_{\tilde{g}} \tilde{g} \dot{\sigma}_{y',-\xi'/|\xi'|_{\tilde{g}}} (t)$, which is exactly the opposite direction where the wave propagates. Since we assume $\nabla_{x'} \varphi $ points in the direction that $x^1, x^2$ increase, the wave propagates in the counterclockwise direction.
Therefore, we have a retrograde elliptical motion as before. 
\begin{remark}\label{solve_eikonal}
Notice in (\ref{angle_time}), the phase function $\varphi$ is always the dominated term for large $\xi'$.
By (\ref{eikonal}), the phase function $\varphi$ satisfies the Eikonal equation
\begin{align*}
\varphi_t   = |\nabla_{x'} \varphi|_{\tilde{g}}, \qquad \text{ with } \varphi(0, x, \xi) = x \cdot \xi.
\end{align*}
By \cite{GOnotes}, we solve it locally by the method of characteristics.
Let $H(t, x, \tau, \eta) = \tau - c_R|{x'}|_{\tilde{g}}$ be the Hamiltonian.
First we find the Hamiltonian curves by solving the system
\begin{align*}
&\dot{t}(s) = H_\tau = 1, &\dot{x'}(s) = H_{\xi'} = -g^{-1} \xi'/|\xi'|_{\tilde{g}},\\
&\dot{\tau}(s) = -H_t = 0, &\dot{\eta'}(s) = -H_{x'} = (\partial_{x'} g^jk) \xi_j \xi_k /|\xi'|_{\tilde{g}},
\end{align*}
where $s$ is the parameter 
and we set $\eta(s) = \nabla_{x'} \varphi(t, x', \xi')$.
This system corresponds to the unit speed geodesic flow
\[
x'(t) = \sigma_{y',-\xi'/|\xi'|_{\tilde{g}}} (t), \quad \eta'(t) = -|\xi'|_{\tilde{g}} \tilde{g} \dot{\sigma}_{y',-\xi'/|\xi'|_{\tilde{g}}} (t),
\]
where $y' = x'(0)$.
\end{remark}

\subsection{The inhomogeneous problem}\label{section_R_ih}
In this subsection, we solve the inhomogeneous problem (\ref{dn_eq}). 
We apply Proposition \ref{prop_solution_ih}
to the first equation in (\ref{hyperbolic_eq}) with zero initial condition. 
Recall $\tilde{l}_1$ is defined in (\ref{def_fl}) and $e_0$ in (\ref{def_e0}).
Then the first equation with with zero initial condition has a unique microlocal solution
\begin{align}\label{solution_firsteq}
h_1(t,x') = \int H(t-s) a(t-s, x', \xi') e^{i(\varphi(t-s, x', \xi') - y' \cdot \xi')} ((I - \tilde{e})^{-1}  e_0^{-1} \tilde{l}_1)(s,y')  \diff y' \diff \xi' \diff s,
\end{align}
where the phase function $\varphi(t,x',\xi')$ and the amplitude $a(t,x',\xi')$ are given by Proposition \ref{prop_solution_ih} with the hyperbolic operator being $\partial_t - i c_R(t,x')\sqrt{-\Delta_{x'}} + r(t, x', D_t, D_x')$.
We can also write the solution as $h_1 = L_{\varphi,a} (I - \tilde{e})^{-1} e_0^{-1} \tilde{l}_1$ by (\ref{def_L}). This proves the following theorem.

\begin{thm}\label{thm_ihm}
    Suppose $l(t,x') \in \mathcal{E}'((0,T) \times \mathbb{R}^2, \mathbb{C}^3)$ microlocally supported in the elliptic region.
    The inhomogeneous system (\ref{dn_eq}) with zero initial condition at $t=0$
    admits a unique microlocal solution
    \begin{align}\label{solution_f}
    \begin{pmatrix} 
    f_1\\
    f_2\\
    f_3
    \end{pmatrix} 
    =
    \widetilde{W}
    \begin{pmatrix} 
     L_{\varphi, a}(I - \tilde{e})^{-1} e_0^{-1} \tilde{l}_1 \\
    (\widetilde{m}_2 + r_2)^{-1} \tilde{l}_2\\
    (\widetilde{m}_3 + r_3)^{-1} \tilde{l}_3
    \end{pmatrix},
    \text{ where }
    \begin{pmatrix} 
    \tilde{l}_1\\
    \tilde{l}_2\\
    \tilde{l}_3
    \end{pmatrix}
    = \widetilde{W}^{-1}
    \begin{pmatrix} 
    l_1\\
    l_2\\
    l_3
    \end{pmatrix}
    \mod C^{\infty}.
    \end{align}
    
\end{thm}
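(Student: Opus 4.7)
The plan is to reduce $\Lambda f = l$ to the decoupled system (\ref{hyperbolic_eq}) via the diagonalizer $\widetilde{W}$, solve each scalar equation with the tools already developed, and then reassemble $f = \widetilde{W} h$. The key point is that every operator appearing in the reduction is microlocally invertible in the elliptic region, where the source and the desired solution live.

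First I would verify that the transformed source $\tilde{l} = \widetilde{W}^{-1} l$ is still microlocally supported in the elliptic region. Since $\widetilde{W}$ is a $\Psi$DO of order zero whose principal symbol $W$ in (\ref{W}) is unitary and hence everywhere invertible, $\widetilde{W}$ is elliptic and preserves wave front sets. Thus each $\tilde{l}_j$ belongs to $\mathcal{E}'((0,T) \times \mathbb{R}^2)$ with $\WF(\tilde{l}_j)$ contained in the elliptic region, which is the hypothesis needed for the existence results of the previous subsection.

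Next I would solve the three decoupled equations in (\ref{hyperbolic_eq}). For the second and third equations the operators $\widetilde{m}_j + r_j$ have principal symbol $m_j / (|\xi'|_g^2 - \alpha\beta)$ with $m_2, m_3 > 0$ bounded away from zero on the elliptic region, so they are elliptic $\Psi$DOs of order one and admit microlocal parametrices, giving the unique microlocal solutions $h_j = (\widetilde{m}_j + r_j)^{-1} \tilde{l}_j \mod C^\infty$ for $j = 2, 3$. For the first equation, $e_0$ is elliptic by (\ref{e0}) along $\Sigma_R$, so $e_0^{-1} \tilde{l}_1$ still has microsupport in the elliptic region; Proposition \ref{prop_solution_ih} then applies with hyperbolic operator $\partial_t - i c_R(t,x') \sqrt{-\Delta_{x'}} + r(t,x',D_t,D_{x'})$ and source $e_0^{-1} \tilde{l}_1$, yielding $h_1 = L_{\varphi,a}(I - \tilde{e})^{-1} e_0^{-1} \tilde{l}_1$ with $\varphi, a$ built from the geometric optics construction of Subsection \ref{subsec_he}. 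Setting $f = \widetilde{W} h$ then produces the formula (\ref{solution_f}).

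Uniqueness runs the argument in reverse: if $f$ is any microlocal solution of $\Lambda f = l$ vanishing for $t < 0$, then $h = \widetilde{W}^{-1} f$ satisfies the decoupled system modulo $C^\infty$, so the ellipticity of $\widetilde{m}_j + r_j$ for $j = 2,3$ and the uniqueness clause of Proposition \ref{prop_solution_ih} for $j = 1$ (established there via a symmetric hyperbolic energy estimate on the reduced operator in (\ref{etilde})) determine $h$, and hence $f$, modulo smoothing. I do not anticipate a genuine obstacle at this stage, since the substantive analytic work — the global diagonalization of $\sigma_p(\Lambda)$ and the parametrix construction for the inhomogeneous first-order hyperbolic equation with lower order term — has already been carried out; the remaining task is merely to check that the microsupport hypotheses are preserved under each step, which follows from the microlocal ellipticity of $\widetilde{W}$, $e_0$, and $\widetilde{m}_j + r_j$ throughout the elliptic region.
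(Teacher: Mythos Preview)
Your proposal is correct and follows essentially the same route as the paper: reduce $\Lambda f=l$ to the decoupled system (\ref{hyperbolic_eq}) via $\widetilde{W}$, invert the two elliptic operators $\widetilde{m}_j+r_j$ for $j=2,3$, apply Proposition~\ref{prop_solution_ih} to the remaining first-order hyperbolic equation to obtain $h_1=L_{\varphi,a}(I-\tilde{e})^{-1}e_0^{-1}\tilde{l}_1$, and set $f=\widetilde{W}h$. Your write-up is in fact somewhat more explicit than the paper's, which simply states that applying Proposition~\ref{prop_solution_ih} to the first equation of (\ref{hyperbolic_eq}) ``proves the following theorem.''
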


Recall that we assume the microsupport of $l(t,x)$ is supported in $(0,T) \times \mathbb{R}^2$. 
Since $\widetilde{W}^{-1} = (\widetilde{W}^{-1})_{ij}$ for $i,j = 1,2,3$ is a matrix-valued $\Psi$DO, so does the microsupport of $\tilde{l}(t,x)$.
Similarly, the microsupport of $h_2, h_3, (I - \tilde{e})^{-1} e_0^{-1} \tilde{l}_1$ are supported in $(0,T) \times  \mathbb{R}^2$ as well.

In the following, we are going to find the polarization of the microlocal solution (\ref{solution_f}) outside the microsupport of $l(t,x')$. In other words, we only consider the solution when the source vanishes and we always assume $t\geq T$.
With this assumption, the Heaviside function in $k_L$ is negligible, i.e.
\begin{align*}
h_1(t,x) 
= F_{\varphi,a}(I - \tilde{e})^{-1} e_0^{-1} \tilde{l}_1,
\end{align*} 
where $F_{\varphi,a}$ is the FIO defined in (\ref{op_F}) associated with the canonical relation $C_F$.
Additionally, we have $
h_2 \in C^\infty, \ h_2 \in C^\infty
$ when $t \geq T$.
Then for $t \geq T$, the microlocal solution (\ref{solution_f}) is
\begin{align}\label{larger_t}
f = 
\begin{pmatrix} 
\widetilde{W}_{11}\\
\widetilde{W}_{21}\\
\widetilde{W}_{31}\\
\end{pmatrix}
F_{\varphi,a} (I - \tilde{e})^{-1} e_0^{-1} \tilde{l}_1
\mod C^\infty,
\end{align}
where $\tilde{l}_1 = (\widetilde{W}^{-1})_{11}l_1 + (\widetilde{W}^{-1})_{12}l_2 + (\widetilde{W}^{-1})_{13}l_3$.
To find out the leading term of the amplitude of the solution, we need to find the leading term $a_0$ of the amplitude of $h_1$, the solution to the following transport equation
\begin{align*}
(\partial_t  - \nabla_{\xi'} \lambda_1 \cdot \nabla_{x'}) a_0 - ( \sum_{|\alpha|=2} \frac{1}{\alpha!} \lambda_1^{(\alpha)} D_{x'}^\alpha \varphi - r_0(t,x',\lambda_1, \nabla_{x'} \varphi ))a_0 = 0,
\text{ with }a_0(0, x', \xi') = 1.
\end{align*}
Here the zero order term $r_0(t,x,\lambda_1, \xi)$ of the $\Psi$DO $r(t,x', D_t, D_{x'})$ is involved. The procesure of computing $r_0$ is in the Appendix \ref{symbolr}.
Then by Lemma \ref{fundl}, the leading term is given by 
\[
f \approx \int
\begin{pmatrix} 
\sigma_p(\widetilde{W}_{11})(t, x, \partial_t \varphi, \nabla_{x'} \varphi)\\
\sigma_p(\widetilde{W}_{21})(t, x, \partial_t \varphi, \nabla_{x'} \varphi)\\
\sigma_p
(\widetilde{W}_{31})(t, x, \partial_t \varphi, \nabla_{x'} \varphi)\\
\end{pmatrix}
a_0(t-s, x, \xi) e^{i\varphi}
\sigma_p(e_0^{-1})(t, x, \partial_t \varphi, \nabla_x \varphi)
\hat{\tilde{l}}_1(s, \xi') \diff s \diff \xi'.
\]
Recall $\sigma_{p}(\widetilde{W}) = \sigma_{p}({W})$ given in (\ref{W}) and $e_0$ in (\ref{e0}). The leading term equals to
\begin{align*}
&[\frac{a_0(t-s, x', \xi')\sigma_p(e_0^{-1})(t, x, \partial_t \varphi, \nabla_{x'} \varphi)}{\sqrt{(\beta \rho \tau^2 - m_1)^2 + |\xi'|_g^2\mu^2 \theta^2}}
\begin{pmatrix} 
i \mu \theta \xi_1/|\xi'|_g\\
i \mu \theta \xi_2/|\xi'|_g\\
\beta \rho \tau^2 
\end{pmatrix}
]\mid_{\tau =\partial_t \varphi,\ \xi' =  \nabla_{x'} \varphi } \\
=&
\frac{a_0(t-s, x', \xi')\sigma_p(e_0^{-1})(t, x, \partial_t \varphi, \nabla_{x'} \varphi)}{\sqrt{b(s_0)^2 \rho^2 c^2_R(t,x')+ \mu^2 \theta^2(c_R)}}
\begin{pmatrix} 
i \mu \theta(c_R) \nabla_{x_1} \varphi /{{|\nabla_{x'} \varphi|}_g}   \\
i \mu \theta(c_R) \nabla_{x_2} \varphi/{{|\nabla_{x'} \varphi|}_g}  \\
b(c_R)\rho c_R^2(t,x') 
\end{pmatrix}\\
=&
i \iota(c_R) a_0(t-s, x', \xi')
\begin{pmatrix} 
i\mu \theta(c_R) \nabla_{x_1} \varphi /{{|\nabla_{x'} \varphi|}_g} \\
i\mu \theta(c_R) \nabla_{x_2} \varphi /{{|\nabla_{x'} \varphi|}_g}  \\
 b(c_R)\rho c_R^2 
\end{pmatrix}
\coloneqq 
\mathcal{P}
\end{align*}
with 
\begin{align*}
\iota(c_R)  \equiv \frac{\sigma_p(e_0^{-1})(t, x', \partial_t \varphi, \nabla_x' \varphi)}{i \sqrt{b(c_R)^2 \rho^2 c^2_R(t,x')+ \mu^2 \theta^2(c_R)}}
&= \frac{1}{R'(c_R)}\sqrt{\frac{a(c_R) + b(c_R)}{b(c_R)}}
\end{align*}
where we use the notation $a(s), b(s), \theta(s), R(s)$ defined in (\ref{def_ab}) and combine (\ref{m1m2}), (\ref{Rayleigh_determinant}), (\ref{e0}).
Further, taking the phase function into consideration, we have the real and the imaginary part of the integrand equal to
\begin{align*}
&\Re(\mathcal{P})
=
-
\iota(c_R)  |a_0(t-s, x, \xi)|
\begin{pmatrix} 
\mu \theta(c_R)  \cos (\varphi + \upsilon)  \nabla_{x_1} \varphi /{{|\nabla_{x'} \varphi|}_g}   \\
\mu \theta(c_R)  \cos (\varphi + \upsilon)  \nabla_{x_2} \varphi /{{|\nabla_{x'} \varphi|}_g}  \\
b(c_R)\rho c_R^2  \sin (\varphi + \upsilon) 
\end{pmatrix},\\
&\Im(\mathcal{P})
= 
\iota(c_R)  |a_0(t-s, x, \xi)|
\begin{pmatrix} 
\mu \theta(c_R)  \cos (\varphi + \upsilon + \pi/2) \nabla_{x_1} \varphi /{{|\nabla_{x'} \varphi|}_g}    \\
\mu \theta(c_R)  \cos (\varphi + \upsilon + \pi/2)  \nabla_{x_2} \varphi /{{|\nabla_{x'} \varphi|}_g}  \\
b(c_R)\rho c_R^2  \sin (\varphi + \upsilon + \pi/2) 
\end{pmatrix},
\end{align*}
where we set $\varphi = \varphi(t-s, x, \xi) - y \cdot \xi$ and write $\upsilon = \arg(a_0)$. 
This implies a retrograde elliptical motion of $f$ as we stated before. 

Moreover, if we expand the term $\hat{\tilde{l}}_1$, then the microlocal solution in (\ref{larger_t}) can be written as 
\begin{align*}
f & = 
\begin{pmatrix} 
\widetilde{W}_{11}\\
\widetilde{W}_{21}\\
\widetilde{W}_{31}\\
\end{pmatrix}
F_{\varphi, a} (I - \tilde{e})^{-1} e_0^{-1} 
\begin{pmatrix} 
\widetilde{W}^{-1}_{11} &
\widetilde{W}^{-1}_{12} &
\widetilde{W}^{-1}_{13}
\end{pmatrix}
\begin{pmatrix} 
l_1\\
l_2\\
l_3\\
\end{pmatrix}
\mod C^\infty.
\end{align*}
Similarly, by the Fundamental Lemma we compute the leading term in the amplitude
\begin{align}
&a_0(t-s, x, \xi)
\begin{pmatrix} 
\sigma_p(\widetilde{W}_{11})\\
\sigma_p(\widetilde{W}_{21})\\
\sigma_p
(\widetilde{W}_{31})\\
\end{pmatrix}
\sigma_p(e_0^{-1})
\begin{pmatrix} 
\sigma_p(\widetilde{W}^{-1}_{11}) &
\sigma_p(\widetilde{W}^{-1}_{12}) &
\sigma_p(\widetilde{W}^{-1}_{13})
\end{pmatrix}
(t, x, \partial_t \varphi, \nabla_x \varphi) \nonumber \\
& = 
[a_0(t-s, x, \xi) \frac{\sigma_p(e_0^{-1})}{k_1^2}
\begin{pmatrix} 
i\mu \theta \xi_1/{{|\xi'|}_g} \\
i\mu \theta \xi_2/{{|\xi'|}_g} \\
\beta \rho \tau^2 - m_1\\
\end{pmatrix}
\begin{pmatrix} 
-i\mu \theta \xi_1 &
-i\mu \theta \xi_2 &
\beta \rho \tau^2 - m_1
\end{pmatrix}
]
\mid_{\tau =\partial_t \varphi,\ \xi' =  \nabla_{x'} \varphi }   \nonumber \\
& = 
 \frac{a_0(t-s, x, \xi)}{b(c_R) \rho c_R^2 R'(c_R)} 
\begin{pmatrix} 
i\mu \theta(c_R) \nabla_{x_1} \varphi /{{|\nabla_{x'} \varphi|}_g} \\
i\mu \theta(c_R) \nabla_{x_2} \varphi /{{|\nabla_{x'} \varphi|}_g} \\
b(c_R) \rho c_R^2\\
\end{pmatrix} \times \\
&
\qquad \qquad \qquad \qquad \qquad
\begin{pmatrix} 
\mu \theta(c_R)\nabla_{x_1} \varphi /{{|\nabla_{x'} \varphi|}_g} &
\mu \theta(c_R) \nabla_{x_2} \varphi /{{|\nabla_{x'} \varphi|}_g}  &
ib(c_R) \rho c_R^2
\end{pmatrix}, \nonumber 
\end{align}
where the last equality comes from (\ref{m1m2}), (\ref{Rayleigh_determinant}), (\ref{e0}), (\ref{def_k}).
Therefore, we have the following theorem.
\begin{thm}\label{thm_ihm_highorder}
    Assume everything in Theorem \ref{thm_ihm}. 
    For $t \geq T$, the displacement on the boundary equals to 
    \begin{align}
    f= \int
    & \frac{a_0(t-s, x, \xi)e^{i\varphi}}{b(c_R) \rho c_R^2 R'(c_R)} 
    \begin{pmatrix} 
    i\mu \theta(c_R) \nabla_{x_1} \varphi /{{|\nabla_{x'} \varphi|}_g}\\
    i\mu \theta(c_R) \nabla_{x_2} \varphi /{{|\nabla_{x'} \varphi|}_g} \\
    b(c_R) \rho c_R^2\\
    \end{pmatrix} 
    (
    \mu \theta(c_R) \hat{l}_1(s, \xi')\nabla_{x_1} \varphi /{{|\nabla_{x'} \varphi|}_g} +     \nonumber\\
    &
    \mu \theta(c_R) \hat{l}_2(s, \xi') \nabla_{x_1} \varphi /{{|\nabla_{x'} \varphi|}_g}  
    +    
    i b(c_R) \rho c_R^2 \hat{l}_3(s, \xi'))
    \diff s \diff \xi'+
    \text{ lower order terms }.
    \label{large_t_2}
    \end{align} 
\end{thm}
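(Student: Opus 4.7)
The plan is to start from the general formula (\ref{solution_f}) of Theorem \ref{thm_ihm} and show that for $t \geq T$ it reduces to (\ref{larger_t}), then apply Lemma \ref{fundl} to extract the principal amplitude. Since $l$ is supported in $(0, T)\times\mathbb{R}^2$ and $\widetilde{W}^{-1}$ is a matrix $\Psi$DO, the components $\tilde{l}_j$ are microlocally supported in the same strip. The operators $\widetilde{m}_j + r_j$, $j = 2, 3$, are elliptic in the elliptic region, so their parametrices are $\Psi$DOs and $h_j = (\widetilde{m}_j + r_j)^{-1}\tilde{l}_j$ inherits the temporal support of $\tilde{l}_j$ microlocally. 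Therefore $h_2, h_3 \in C^\infty$ for $t \geq T$, contributing only smooth terms to $f$ through the columns of $\widetilde{W}$.

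For the first coordinate I would use the representation (\ref{solution_firsteq}) with kernel $k_L = H(t-s)\, k_F$ as in (\ref{def_kF}). Since $l$ is supported in $s < T \leq t$, the Heaviside factor is identically $1$ on the relevant integration range, so modulo smoothing $h_1 = F_{\varphi, a}(I - \tilde{e})^{-1} e_0^{-1} \tilde{l}_1$ with $F_{\varphi, a}$ the FIO in Claim \ref{op_F}. Expanding $\tilde{l}_1 = (\widetilde{W}^{-1})_{11}l_1 + (\widetilde{W}^{-1})_{12}l_2 + (\widetilde{W}^{-1})_{13}l_3$ and plugging back gives the composition
\[
f \equiv \begin{pmatrix}\widetilde{W}_{11}\\ \widetilde{W}_{21}\\ \widetilde{W}_{31}\end{pmatrix} F_{\varphi, a}(I - \tilde{e})^{-1} e_0^{-1}\begin{pmatrix}\widetilde{W}^{-1}_{11} & \widetilde{W}^{-1}_{12} & \widetilde{W}^{-1}_{13}\end{pmatrix}\begin{pmatrix}l_1\\ l_2\\ l_3\end{pmatrix}\mod C^\infty.
\]

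Next I would apply Lemma \ref{fundl} to this composition of an FIO with $\Psi$DOs on the left and right, keeping only the highest-order contribution. Since $I - \tilde{e}$ has principal symbol $1$, the leading amplitude of the composed operator equals the matrix product of the principal symbols, evaluated at $(\tau, \xi') = (\partial_t\varphi, \nabla_{x'}\varphi)$, multiplied by the leading amplitude $a_0(t-s, x', \xi')$ of $F_{\varphi, a}$. Using the explicit formula (\ref{W}) for $\sigma_p(\widetilde W)$, the outer product of its first column with the first row of $\sigma_p(\widetilde{W}^{-1})$ (which equals the conjugate transpose of the first column, since $W$ is unitary) yields the rank-one matrix appearing in (\ref{large_t_2}), up to the scalar normalization $1/k_1^2$.

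The remaining step is a purely algebraic simplification of the scalar factor. Evaluating on the characteristic variety $\Sigma_R$, one has $\tau = c_R|\xi'|_g$, which by (\ref{m1m2}) and (\ref{def_k}) converts $(\beta\rho\tau^2 - m_1)/k_1$ into the entry $b(c_R)\rho c_R^2$ divided by an appropriate power of $|\xi'|_g$, and similarly replaces $\xi_j/|\xi'|_g$ by $\nabla_{x_j}\varphi / |\nabla_{x'}\varphi|_g$. Combining the identities (\ref{m1m2}), (\ref{Rayleigh_determinant}), and the formula (\ref{e0}) for $\sigma_p(e_0^{-1})$ on $\Sigma_R$, the prefactor collapses to $1/(b(c_R)\rho c_R^2 R'(c_R))$ as claimed. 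The main obstacle I anticipate is not conceptual but bookkeeping: the columns of $\widetilde{W}$, the factor $e_0^{-1}$, and the normalization $k_1$ each carry different homogeneities in $(\tau,\xi')$, so one must be careful with the powers of $|\xi'|_g$ to confirm the prefactor simplifies exactly as stated. Once this is checked, (\ref{large_t_2}) follows, and all further terms in the symbol expansion of Lemma \ref{fundl} are of strictly lower order, absorbed in the ``lower order terms'' remainder.
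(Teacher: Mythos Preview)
Your proposal is correct and follows essentially the same route as the paper: reduce (\ref{solution_f}) to (\ref{larger_t}) for $t\geq T$ by observing that $h_2,h_3$ are smooth and the Heaviside factor drops, then compose the $\Psi$DOs with the FIO $F_{\varphi,a}$ via the Fundamental Lemma (Lemma~\ref{fundl}), keep only principal symbols (using $\sigma_p(I-\tilde e)=1$), and simplify the resulting rank-one matrix on $\Sigma_R$ with (\ref{m1m2}), (\ref{Rayleigh_determinant}), (\ref{e0}), (\ref{def_k}). Your anticipated obstacle---tracking the homogeneities of $k_1$, $e_0^{-1}$, and the columns of $W$---is exactly the bookkeeping the paper carries out explicitly in the displayed computation just before the theorem.
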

\begin{remark}
This theorem describes the microlocal polarization up to lower order terms of the displacement of Rayleigh waves on the boundary, when there is a source $(l_1, l_2, l_3)^T$ microlocally supported in the elliptic region with compact support.
Up to lower order terms, the displacement can also be regarded as a supposition of the real part and imaginary part of the leading term and each of them has a elliptical retrograde motion as we discussed before. 
Indeed, the leading term has a similar pattern of that in Theorem \ref{thm_cauchy}, except different scalar functions in each component.  
Additionally, one can see the compatible condition (\ref{compatible}) satisfied by the Cauchy data there actually means the Cauchy data can be regarded as produced by certain source, according to (\ref{larger_t}).
We mention that one can compute the polarization set defined in \cite{Dencker1982} and it corresponds to the direction of the polarization that we have here. 
\end{remark}
\subsection{The microlocal solution $u$}\label{u_construct}
With the boundary displacement $f$ given in Section \ref{section_R_cauchy} or \ref{section_R_ih}, 
in this subsection we are going to construct the microlocal solution $u$ to the elastic system (\ref{elastic_eq}) in the elliptic region.
This can be done by solving the boundary value problem, as is mentioned in Section \ref{section_pre_bvp}.

Recall in \cite{Stefanov2019} with $w = (w^s, w^p)^T = U^{-1} u$ we have the decoupled system
\begin{equation}\label{wave_w}
\begin{cases}
(\partial^2_t - c^2_s \Delta_g - A_s) w^s  = 0 \mod C^\infty,\\
(\partial^2_t - c^2_p \Delta_g - A_p) w^p  = 0 \mod C^\infty,
\end{cases}
\end{equation}
where $A_s \in \Psi^1$ is a $2\times 2$ matrix-valued $\Psi$DO and $A_s \in \Psi^1$ is a scalar one. 
The boundary value on $\{x^3=0 \}$ is given by $ w_b = U_{out}^{-1} f$.
For more details of $U$ and $U_{out}$ see \cite{Stefanov2019}. 
We construct the microlocal solution $w$ to (\ref{wave_w}) by the same procedure of constructing parametrices to the elliptic boundary value problems,
and then we get $u = Uw$. 
One can follow the construction in \cite[\S 3.2.3]{Stefanov2019} by solving the Eikonal equations with a complex phase function or use the following steps based on the construction in \cite[\S 7.12]{Taylor2010}.

First, in the semi-geodesic coordinates $(x', x^3)$, we have $g_{i3} = \delta_{i3}$ for $i = 1,2,3$, which enables us to regroup the above equations as
\begin{equation*}
\begin{cases}
(\partial^2_{x^3} - ({c^{-2}_s}\partial^2_t - \Delta_{g_{x'}}) - A_s) w^s  = 0 \mod C^\infty,\\
(\partial^2_{x^3} - ({c^{-2}_p} \partial^2_t - \Delta_{g_{x'}}) - A_p) w^p  = 0 \mod C^\infty,
\end{cases}
\end{equation*}
where $g_{x'}$ refers to $g$ restricted to $x'$ and we abuse the notations $A_s, A_p$ to denote the new lower order terms.
In the elliptic region, the operators $({c^{-2}_s}\partial^2_t - \Delta_{g_{x'}})$ and $({c^{-2}_p}\partial^2_t - \Delta_{g_{x'}})$ are self-adjoint operators with real positive symbols. 
Therefore they have a square root $B_s(t,x,D_{(t,x')}), B_p(t,x,D_{(t,x')}) \in \Psi^1$ respectively with 
\[
\sigma_p(B_s) = \sqrt{|\xi'|_g^2 - c_s^{-2}\tau^2} = \alpha, \quad \sigma_p(B_p) = \sqrt{|\xi'|_g^2 - c_p^{-2}\tau^2} = \beta.
\]
Further, we apply the elliptic operators $(\partial_{x^3} - B_s)^{-1}$ and $(\partial_{x^3} - B_p)^{-1}$ respectively to the equations above, 
to have
\begin{equation*}
\begin{cases}
(\partial_{x^3} + B_s + R_s) w^s  = 0 \mod C^\infty,\\
(\partial_{x^3} + B_p + R_p) w^p  = 0 \mod C^\infty,
\end{cases}
\end{equation*}
where $R_s, R_p$ are the new lower order terms of order zero.
Since $\partial_{x^3} + B_s $ is elliptic itself, one can write $(\partial_{x^3} + B_s + R_s) = (I - R_s(\partial_{x^3} + B_s)^{-1})(\partial_{x^3} + B_s)$. 
The same works for $\partial_{x^3} + B_p $.
This implies
\begin{equation*}
\begin{cases}
(\partial_{x^3} + B_s) w^s  = 0 \mod C^\infty,\\
(\partial_{x^3} + B_p) w^p  = 0 \mod C^\infty,
\end{cases}
\Rightarrow
\partial_{x^3} w = -Q(t,x,D_{t,x'})w \mod C^\infty,
\end{equation*}
where $Q$ is a matrix-valued $\Psi$DO with
\[
\sigma_p(Q) = 
-
\begin{pmatrix}
\alpha & 0 & 0\\
0 & \alpha & 0\\
0 & 0 & \beta 
\end{pmatrix}.
\]
By \cite[\S 7.12]{Taylor2010}, we look for 
\begin{align*}
w(t,x',x^3) = \int A(t,x',x^3,\tau, \xi') e^{i(t \tau + x' \cdot \xi')} \hat{w}_b(\tau, \xi') \diff \tau \diff \xi',
\end{align*}
where $A(t,x',x^3,\tau, \xi') \sim \sum_{j \geq 0} A_j(t,x',x^3,\tau, \xi')$ is constructed inductively.
More specifically, the leading term $A_0(t,x',x^3,\tau, \xi')$ is given by 
\[
(\partial_{x^3} - Q) A_0 = 0, \quad A_0(t,x',0,\tau, \xi')=I.
\]
For $j \geq 1$, we solve $A_j$ from 
\[
(\partial_{x^3} - Q) A_j = R_j, \quad A_0(t,x',0,\tau, \xi')=I,
\]
where $R_j$ is the reminder term related to $A_0, \ldots, A_{j-1}$.
It is shown in \cite{Taylor2010} that for $j \geq 0$ there exists $B_j(t,x',x^3,\tau, \xi')$ such that
\[
A_j = B_j e^{-C \sqrt{1 + |\tau|^2 + |\xi'|^2}}, \quad \text{with } (x^3)^kD_{x^3}^l B_j \text{ bounded in } S^{-j-k+l}_{1,0},
\]
where $C$ is the constant satisfying $\alpha > \beta > C |(\tau, \xi')|$. 
Such $C$ exists since there exists a conically compact neighborhood $K$ containing $\text{\( \WF \)}(f)$ in the elliptic region. 

When $f$ is given in (\ref{solution_f}), the parametrix $u = U w$ we construct above should satisfy
\begin{equation*}
\begin{cases}
u_{tt} - Eu  = 0  \mod C^\infty & \mbox{in } \mathbb{R}_t \times \Omega,\\
N u = l \mod C^\infty &\mbox{on } \Gamma,\\
u|_{t<0} = 0.
\end{cases}
\end{equation*}
In particular, it has the same microlocal behavior as the true solution to (\ref{elastic_eq}) by the justification of parametrices established in \cite{Taylor1979}.

\subsection{Flat case with constant coefficients}
In this subsection, we suppose the boundary $\Gamma$ is flat given by $x^3 = 0$ with Euclidean metric.
Suppose the parameters $\lambda, \mu, \rho$ are constants. 

In this case, by using the partition of unity, the elastic wave equation can be fully decoupled as 
\[
U_0^{-1} E U_0 = 
\begin{pmatrix}
-c_s^2 \Delta I_2 & 0\\
0 & -c_p^2 \Delta 
\end{pmatrix}
\]
and 
the solution is $u = U_0 w$, where
\begin{align}\label{flatw}
w(t,x) = 
\begin{pmatrix}
w_1\\
w_2\\
w_3\\
\end{pmatrix}
= \int e^{i(t \tau + x' \cdot \xi')}
\begin{pmatrix}
e^{-\widetilde{\alpha}{x^3}}   \hat{f}_1(\tau, \xi') \\
e^{-\widetilde{\alpha}{x^3}}   \hat{f}_2(\tau, \xi') \\
e^{-\widetilde{\beta}{x^3}}   \hat{f}_3(\tau, \xi') \\
\end{pmatrix}
\diff \tau \diff \xi'
\end{align}
with $f_i$ as the solution to $\Lambda f = l$, where
\begin{align*}
&\widetilde{\alpha} = \sqrt{|\xi'|^2 - c_{s}^{-2} \tau^2},
\quad \widetilde{\beta} = \sqrt{|\xi'|^2 - c_{p}^{-2} \tau^2}.
\end{align*}

To solve $\Lambda f = l$, we perform the same procedure as before. 
By verifying $\sigma_{p-1}(\Lambda) = 0$, we have
\[
W^{-1} \Lambda W = 
\begin{pmatrix}
e_0(\partial_t - ic_R\sqrt{-\Delta_{x'}}) & 0 & 0\\
0 & \widetilde{m}_2 & 0\\
0& 0 & \widetilde{m}_3
\end{pmatrix},
\]
where $e_0$ is given in (\ref{e0}) and we use the notations in (\ref{def_ab}) as before.
Combining the microlocal solution (\ref{goc1})  to the homogeneous hyperbolic equation and that of the inhomogeneous one with zero initial condition in (\ref{solution_firsteq}), we have the solution to the inhomogeneous one with arbitrary initial condition
\begin{equation*}
\begin{cases}
(\partial_t - i c_R \sqrt{- \Delta_{x'}} ) h_1 = e_0^{-1} (W^{-1} l)_1 \equiv g, \quad t >0 \\
h_1(0,x') = h_{1,0}(x')
\end{cases}
\end{equation*}
has the following solution
\begin{align}\label{flat_h1}
h_1(t,x') &= \int e^{i(t c_R {{|\xi'|}_g}  + x' \cdot \xi')} \hat{h}_{1,0}(\xi') \diff \xi' + 
\int H(t-s) e^{i((t-s)c_R {{|\xi'|}_g}  + x' \cdot \xi')} \hat{g}(s,\xi') \diff s \diff \xi'
\end{align}
and one can show
\[
\hat{h}_1(\tau, \xi') = \delta(c_R|\xi'| - \tau) \hat{h}_{1,0}(\xi') + \hat{H}(\tau - c_R{{|\xi'|}} ) \hat{g}(\tau, \xi')
\]
by directly taking the Fourier transform.
Since all $\Psi$DOs involved here have symbols free of $x'$, then they are Fourier multipliers and we have
\[
\hat{g}(\tau, \xi') = \sigma(e_0^{-1})\sum_k \sigma(W^{-1})_{1k} \hat{l}_k(\tau, \xi'),
\]
where $\sigma(W)$ given in (\ref{W}) is unitary and therefore $\sigma(W^{-1}) = \sigma(W^*) = \sigma(W)^*$.

The last two equations after we diagonalize the DN map have the following solutions
\[
\hat{h}_2(\tau, \xi')  = \sigma(\tilde{m}_2)^{-1} \sum_k \sigma(W^{-1})_{2k} \hat{l}_k(\tau, \xi'), 
\quad \hat{h}_3(\tau, \xi')  = \sigma(\tilde{m}_3)^{-1} \sum_k \sigma(W^{-1})_{3k} \hat{l}_k(\tau, \xi').
\]
Thus, the displacement on the boundary is given by $\hat{f}(\tau, \xi') = \sigma(W) \hat{h}(\tau, \xi')$.
\begin{example}
    In the following assume we have a time-periodic source
    \[
    l(t,x')
    = 
    \begin{pmatrix} 
    A_1, &
    A_2, &
    A_3
    \end{pmatrix}^T
    e^{ipt}\delta(x_1),
    \]
    where $p$ is a positive number and $A_1, A_2, A_3$ are constants.
    This gives us a line source periodic in $t$ on the boundary.
    Furthermore, we assume $A_1 = A_2 = 0$. In this case $\hat{{l}}_3 (s, \xi')= A_3e^{ips} \delta(\xi_2)$.
    Since the amplitude $a_0(t-s,x', \xi') \equiv 1$,
    by (\ref{large_t_2},\ref{flat_h1}) the displacement away from the support of the source up to lower order terms equals to 
    \begin{align*}
    f(t, x')
    & = \int
    \frac{ e^{i(t c_R {{|\xi'|}}  + x' \cdot \xi')}  }{\sqrt{b(c_R)(b(c_R) + a(c_R))} \rho c_R^2 } 
    \begin{pmatrix} 
    i\mu \theta(c_R) \xi_1/{{|\xi'|}} \\
    i\mu \theta(c_R) \xi_2/{{|\xi'|}} \\
    b(c_R) \rho c_R^2\\
    \end{pmatrix}
    \hat{h}_{1,0}(\xi') \diff \xi' + \text{lower order terms}\\
    & +  \int
    H(t-s)
    \frac{e^{i((t-s)c_R {{|\xi'|}}  + x' \cdot \xi')} }{ b(c_R) \rho c_R^2 R'(c_R)} 
    \begin{pmatrix} 
    i\mu \theta(c_R) \xi_1/{{|\xi'|}} \\
    i\mu \theta(c_R) \xi_2/{{|\xi'|}} \\
    b(c_R) \rho c_R^2\\
    \end{pmatrix}
    i b(c_R) \rho c_R^2
    A_3
    e^{ips}\delta(\xi_2)
    \diff s
    \diff \xi_1 \diff \xi_2.\\
    & = \int
    \frac{ e^{i(t c_R {{|\xi_1|}}  + x_1  \xi_1)}  }{\sqrt{b(c_R)(b(c_R) + a(c_R))} \rho c_R^2 } 
    \begin{pmatrix} 
    i\mu \theta(c_R) \xi_1/{{|\xi_1|}} \\
    0 \\
    b(c_R) \rho c_R^2\\
    \end{pmatrix}
    \hat{h}_{1,0}(\xi') \diff \xi' + \text{lower order terms}\\
        &+
        \frac{A_3 e^{ipt}}{R'(c_R)}  
        \begin{pmatrix} 
        \mu \theta(c_R) \big(e^{i p\frac{ x_1}{c_R}} - e^{-i p\frac{x_1}{c_R}}\big) \\
        0\\
        {  - i b(c_R) \rho c_R^2}\big( e^{i p\frac{x_1 }{c_R}} + e^{- i p\frac{x_1 }{c_R}}\big)  \\
        \end{pmatrix} 
        +
        \frac{A_3 e^{ipt}}{R'(c_R)}
        \int
        \frac{ e^{i x_1 \cdot \xi_1}}{p - c_R|\xi_1|} 
        \begin{pmatrix} 
        i\mu \theta(c_R) \xi_1/|\xi_1|\\
        0\\
        b(c_R) \rho c_R^2\\
        \end{pmatrix}
        \diff \xi_1.\\
    \end{align*}
    If we choose the Cauchy data as the inverse Fourier transform of the distribution \[
    \hat{h}_{1,0}(\xi') = -\frac{A_3 {e^{i(p - c_R|\xi_1|)t}}}{R'(c_R){(p - c_R |\xi_1|)}}\sqrt{b(c_R)(b(c_R) + a(c_R))}\rho c_R^2
    \] then we have
\begin{align*}
    f(t, x')
    & = 
    \frac{A_3 e^{ipt}}{R'(c_R)}  
    \begin{pmatrix} 
    \mu \theta(c_R) \big(e^{i p\frac{ x_1}{c_R}} - e^{-i p\frac{x_1}{c_R}}\big) \\
    0\\
    {  - i b(c_R) \rho c_R^2}\big( e^{i p\frac{x_1 }{c_R}} + e^{- i p\frac{x_1 }{c_R}}\big)  \\
    \end{pmatrix}  
    + \text{ lower order terms,}
    \end{align*}
    which coincides with (77) and (78) in \cite{1904a}.
\end{example}
\begin{example}
    In this example assume we have 
    $
    l
    = 
    \begin{pmatrix} 
    A_1\\
    A_2\\
    A_3
    \end{pmatrix}
    \delta(t)\delta(x_1),
    $
    where $A_1, A_2, A_3$ are constants.
    This gives us a source supported at $t=0$ on the boundary. In this case, since $\hat{{l}}_3 (s, \xi')= A_3\delta(s) \delta(\xi_2)$ and $A_1 = A_2 = 0$, by (\ref{large_t_2}) the displacement for $t>0$ equals to 
    \begin{align*}
    f
    & =  \int
    \frac{e^{i((t-s)c_R {{|\xi'|}}  + x' \cdot \xi')} }{b(c_R) \rho c_R^2 R'(c_R)} 
    \begin{pmatrix} 
    i\mu \theta(c_R) \xi_1/{{|\xi'|}} \\
    i\mu \theta(c_R) \xi_2/{{|\xi'|}} \\
     b(c_R) \rho c_R^2\\
    \end{pmatrix}
     ib(c_R) \rho c_R^2
    A_3
    \delta(s)\delta(\xi_2)
    \diff s
    \diff \xi_1 \diff \xi_2 \mod C^\infty\\
    & =  \int
    \frac{A_3 }{R'(c_R)} e^{i(tc_R |\xi_1| + x_1 \cdot \xi_1)} 
    \begin{pmatrix} 
    -\mu \theta(c_R) \sgn\xi_1\\
    0\\
    ib(c_R) \rho c_R^2\\
    \end{pmatrix}
    \diff \xi_1 
    =
    \begin{pmatrix} 
    {- A_3 \mu \theta(c_R) I_1}/{R'(c_R)} \\
    0\\
    {i A_3 b(c_R) \rho c_R^2  I_2}/{R'(c_R)}  \\
    \end{pmatrix} \mod C^\infty,
    \end{align*}
    where 
    \begin{align*}
    &I_1 = \int e^{i(tc_R |\xi_1| + x' \cdot \xi_1)}  \sgn\xi_1 \diff \xi_1 =
    \int e^{i( \xi_1(tc_R + x_1))}  H(\xi_1)\diff \xi_1  - \int e^{i( \xi_1(-tc_R + x_1))}  H(-\xi_1)\diff \xi_1\\
    &=  \pi(\delta(tc_R - x_1)  - \delta(tc_R + x_1))  + i(\text{p.v.} \frac{1}{tc_R + x_1} - \text{p.v.} \frac{1}{tc_R - x_1}),\\
    &I_2 = \int e^{i(tc_R |\xi_1| + x' \cdot \xi_1)} \diff \xi_1 =
    \int e^{i( \xi_1(tc_R + x_1))}  H(\xi_1)\diff \xi_1  + \int e^{i( \xi_1(-tc_R + x_1))}  H(-\xi_1)\diff \xi_1\\
    &= - \pi(\delta(tc_R - x_1) + \delta(tc_R + x_1)) + i(\text{p.v.} \frac{1}{tc_R - x_1} + \text{p.v.} \frac{1}{tc_R + x_1}).\\
    \end{align*}
\end{example}

\section{Stoneley waves}\label{section_stoneley}
In this section, we assume $\Gamma$ is an interface between two domains $\Omega_{-}, \Omega_{+}$. 
Locally, $\Gamma$ can be flatten as $x^3=0$ and $\Omega_+$ is the positive part. 
For the density and Lam\'{e} parameters, we have $\rho_+, \lambda_+, \mu_+$ in $\Omega_{+}$ and 
$\rho_-, \lambda_-, \mu_-$ in $\Omega_{-}$, which are functions smooth up to $\Gamma$. 
Let $u^{\pm}$ be $u$ restricted to $\Omega_\pm$. 

Suppose there are no incoming waves but boundary sources $l, q \in \mathcal{E'}((0,T) \times \mathbb{R}^2, \mathbb{C}^3)$ microlocally supported in the elliptic region, i.e. we are finding the outgoing microlocal solution $u^\pm$ for the elastic equation with transmission conditions 
\begin{equation}\label{elastic_trans}
\begin{cases}
\partial_t^2 u^{\pm} - Eu^\pm  = 0 & \mbox{in } \mathbb{R}_t \times \Omega_{\pm},\\
[u] = l, \quad [N u] = q &\mbox{on } \Gamma,\\
u|_{t<0} = 0,
\end{cases}
\end{equation}
where $[v]$ denote the jump of $v$ from the positive side to the negative side across the surface $\Gamma$.
By (9.2) in \cite{Stefanov2019}, with no incoming waves the transmission conditions can be written in the form of 
\begin{equation}\label{transmission}
\begin{pmatrix} 
U_{out}^{+} & -U_{out}^- \\
M_{out}^+  & - M_{out}^-
\end{pmatrix}
\begin{pmatrix} 
w_{out}^+ \\
w_{out}^-
\end{pmatrix}
=
\begin{pmatrix} 
l\\
q
\end{pmatrix}
\Longrightarrow
\begin{pmatrix} 
I & -I \\
\Lambda^+  & - \Lambda^-\\
\end{pmatrix}
\begin{pmatrix} 
f^+ \\
f^-
\end{pmatrix}
=
\begin{pmatrix} 
l\\
q
\end{pmatrix},
\end{equation}
if we set
\[
f^+= U_{out}^{+} w_{out}^+ = u^+|_{\Gamma}, \quad f^-= U_{out}^{-} w_{out}^- = u^-|_{\Gamma}.
\]
This implies that if we can solve $f^\pm$ from (\ref{transmission}), then the solution $u_\pm$ to (\ref{elastic_trans}) can be solved by constructing microlocal outgoing solutions to the boundary value problems with Dirichlet b.c. $f^+, f^-$ in $\Omega_{+}, \Omega_{-}$ respectively. 
Since $x^3$ has positive sign in $\Omega_{+}$ and negative sign in $\Omega_{-}$, to have evanescent modes in both domain, we choose $\xi_{3, \pm}^s, \xi_{3, \pm}^p$ with opposite signs
\begin{align*}
\xi_{3, \pm}^s = \pm i \alpha_\pm \equiv  \pm i\sqrt{|\xi'|_g^2 - c_{s, \pm}^{-2} \tau^2},  \quad \xi_{3, \pm}^p = \pm i \beta_\pm \equiv \pm i \sqrt{|\xi'|_g^2 - c_{p, \pm}^{-2} \tau^2},
\end{align*}
where 
\[
c_{s,\pm} = \sqrt{\mu_\pm/\rho_\pm}, \quad c_{p,\pm} = \sqrt{(\lambda_\pm + 2 \mu_\pm)/\rho_\pm}.
\]
Then the principal symbols $\sigma_p(\Lambda^\pm)$ are 
\[
\frac{i}{|\xi'|_g^2 - \alpha_\pm \beta_\pm}
\begin{pmatrix} 
\pm(\mu_\pm (\alpha_\pm - \beta_\pm)\xi_2^2 + \beta_\pm \rho_\pm \tau^2) &
\pm\mu_\pm\xi_1\xi_2(\beta_\pm - \alpha_\pm)& 
-i\mu_\pm \xi_1 \theta_\pm\\
\pm \mu_\pm\xi_1\xi_2(\beta_\pm - \alpha_\pm) &
\pm (\mu_\pm (\alpha_\pm - \beta_\pm)\xi_2^2 + \beta_\pm \rho_\pm \tau^2) 
& -i\mu_\pm \xi_2 \theta_\pm\\ 
i \mu_\pm \theta_\pm \xi_1 & i \mu_\pm \theta_\pm \xi_2 & \pm\alpha_\pm \rho_\pm \tau^2
\end{pmatrix}
,
\]
where 
\[
\theta_\pm = |\xi'|_g^2 + \alpha_\pm^2 - 2 \alpha_\pm \beta_\pm = 2 |\xi'|_g^2 -  c_{s,\pm}^{-2} \tau^2 - 2 \alpha_\pm \beta_\pm.
\]

To solve (\ref{transmission}), first we multiply the equation by an invertible matrix to have
\begin{align*}
&\begin{pmatrix} 
I & 0\\
- \Lambda^+  & I\\
\end{pmatrix}
\begin{pmatrix} 
I & -I \\
\Lambda^+  & - \Lambda^-\\
\end{pmatrix}
\begin{pmatrix} 
f^+ \\
f^-
\end{pmatrix}
=
\begin{pmatrix} 
l\\
q - \Lambda^+ l
\end{pmatrix}.\\
\Rightarrow &\begin{pmatrix} 
I &  - I \\
0  & \Lambda^+- \Lambda^-\\
\end{pmatrix}
\begin{pmatrix} 
f^+ \\
f^-
\end{pmatrix}
=
\begin{pmatrix} 
l\\
q - \Lambda^+ l
\end{pmatrix}
\Rightarrow
\begin{cases}
f^+ = l + f^-\\
(\Lambda^+- \Lambda^-)f^- = q - \Lambda^+ l
\end{cases}
\end{align*}
In the following, first we solve $f^-$ from 
\begin{align} \label{lambda_2}
(\Lambda^+- \Lambda^-)f^- = q - \Lambda^+ l
\end{align}
microlocally and then we have 
$f^+$. 
This gives the microlocal outgoing solution to (\ref{transmission}).
\subsection{Diagonalization of $\Lambda^+-\Lambda^-$}
Recall the calculation before, the principal symbol of the DN map can be partially diagonalized by the matrix $V_0$. 
By the same trick, we have
\begin{equation*}
V_0^* \sigma_{p}(\Lambda^+- \Lambda^-) V_0 = 
\begin{pmatrix} 
 {M} & 0\\
0 & (\mu_+ \alpha_+  + \mu_- \alpha_- )
\end{pmatrix}
\end{equation*}
where
\[
{M} = 
\frac{1}{|\xi'|_g^2 - \alpha_+ \beta_+}
\begin{pmatrix} 
\beta_+ \rho_+ \tau^2 & -i{{|\xi'|}_g} \mu_+ \theta_+ \\
i{{|\xi'|}_g} \mu_+ \theta_+ &  \alpha_+ \rho_+ \tau^2
\end{pmatrix}
-
\frac{1}{|\xi'|_g^2 - \alpha_- \beta_-}
\begin{pmatrix} 
- \beta_- \rho_- \tau^2 & -i{{|\xi'|}_g} \mu_- \theta_- \\
i{{|\xi'|}_g} \mu_-\theta_- & - \alpha_- \rho_- \tau^2
\end{pmatrix}.
\]
Let $s = \frac{\tau}{{{|\xi'|}_g} }$ as before.
We follow the similar argument as in \cite{Chadwick1994} to show the $2 \times 2$ matrix $M$ has two distinct eigenvalues and only one of them could be zero  for $0 < s <\min c_{s, \pm}$.
Define the following functions of $s$ related to $\alpha_\pm, \beta_\pm, \theta_\pm$
\begin{align}\label{def_ab_S}
a_\pm(s) = \sqrt{1 - c_{s,\pm}^{-2} s^2},
\quad b_\pm(s) = \sqrt{1 - c_{p,\pm}^{-2} s^2},
\quad \theta_\pm(s)
= 2  -  c_{s,\pm}^{-2} - 2 a_\pm (s) b_\pm (s).
\end{align}
Set
\begin{align*}
N_\pm (s) &= 
\frac{1}{1 - a_\pm b_\pm}
\begin{pmatrix} 
b_\pm(s) \rho_\pm s^2 & - i\mu_\pm \theta_\pm(s) \\
- i\mu_\pm \theta_\pm(s) &  a_\pm(s) \rho_\pm s^2
\end{pmatrix}\\
&=
\begin{pmatrix} 
\frac{b_\pm(s) \rho_\pm s^2 }{1 - a_\pm b_\pm}& - i(2 \mu_\pm - \frac{\rho_\pm s^2 }{1 - a_\pm b_\pm} ) \\
i(2 \mu_\pm - \frac{\rho_\pm s^2 }{1 - a_\pm b_\pm} ) &  \frac{a_\pm(s) \rho_\pm s^2 }{1 - a_\pm b_\pm}
\end{pmatrix}
,
\end{align*}
and it follows the matrix $M$ can be represented by 
\begin{align*}
{M} 
&= {{|\xi'|}_g} 
\begin{pmatrix} 
\frac{b_+(s) \rho_+ s^2 }{1 - a_+ b_+} + \frac{b_-(s) \rho_- s^2 }{1 - a_- b_-}& 
- i(2 (\mu_+ - \mu_-) - (\frac{\rho_+ s^2 }{1 - a_+ b_+} -\frac{\rho_- s^2 }{1 - a_- b_-} ) ) \\
i(2 (\mu_+ - \mu_-) - (\frac{\rho_+ s^2 }{1 - a_+ b_+} -\frac{\rho_- s^2 }{1 - a_- b_-} ) ) &  
\frac{a_+(s) \rho_+ s^2 }{1 - a_+ b_+} + \frac{a_-(s) \rho_- s^2 }{1 - a_- b_-}
\end{pmatrix}\\
&= {{|\xi'|}_g} (N_+(s) + N_-^T(s))  \equiv
\begin{pmatrix} 
M_{11}& M_{12}\\
M_{21} &  M_{22}
\end{pmatrix}, \numberthis \label{M}
\end{align*}
where we denote the entry of $M$ by $M_{ij}$ for $i,j = 1,2$ and the two eigenvalues by ${m}_1(s)$ and ${m}_2(s)$.
Notice 
\begin{align*}
\varrho &= \sqrt{(M_{11} - M_{22})^2 + 4M_{12}M_{21}} \\
& = (\frac{(b_+(s) -a_+(s)) \rho_+ s^2 }{1 - a_+ b_+}  -  \frac{(b_-(s) -a_-(s)) \rho_- s^2 }{1 - a_- b_-})^2 + (2 (\mu_+ - \mu_-) - (\frac{\rho_+ s^2 }{1 - a_+ b_+} -\frac{\rho_- s^2 }{1 - a_- b_-} ) )^2
\end{align*}
is always nonnegative.
The eigenvalues of $M$ can be written in the following specific form
\begin{align}\label{def_m1_s}
&m_1(t,x', \tau, \xi') = \frac{M_{11} + M_{22} - \sqrt{\varrho}}{2} \equiv {{|\xi'|}_g}  m_1(s), \\ 
&m_2(t,x', \tau, \xi') = \frac{M_{11} + M_{22} + \sqrt{\varrho} }{2} \equiv {{|\xi'|}_g}  m_2(s), \nonumber
\end{align}
where only $m_1(s)$ could be zero. 
More precisely, we have   $m_1(s)$ vanishes if and only if 
\begin{align}\label{detM_S}
0 = \det M = (1-a_+b_+)(1-a_-b_-) S(s)
\end{align}
is satisfied for some $s_0$, where
\begin{align}\label{eq_S}
S(s) &= 
((\rho_+ a_- + \rho_- a_+)(\rho_+ b_- + \rho_- b_+) - (\rho_+ - \rho_-)^2) s^4 \nonumber \\
&- 4(\mu_+ - \mu_-)^2(1-a_+b_+)(1-a_-b_-) \nonumber \\
&+ 4(\mu_+ - \mu_-)(\rho_+ (1-a_-b_-) -\rho_-(1-a_+b_+)) s^2.
\end{align}
Notice the factor $(1-a_+b_+)(1-a_-b_-)$ is always positive and therefore it is equivalent to $S(s_0) = 0$.
If such $s_0$ exists, it corresponds to the propagation speed $c_{ST}$ of the so called Stoneley waves, first proposed in \cite{1924a}. 
We call $ c_{ST}$ the \textbf{Stoneley speed}
and it is a simple zero by Proposition \ref{simplezero} in the following. 
This proposition of uniqueness of Stoneley waves is proved by the definiteness of $N_\pm(s)$ and first appears in \cite{1985a} and then is used in \cite{Chadwick1994}. 
Here we rewrite it in our notations.
\begin{pp}\label{simplezero}
    For $0 < s < \min c_{s, \pm}$, the eigenvalues $m_1(s), m_2(s)$ decrease as $s$ increases. 
    Only $m_1(s)$ can be zero.
    This happens when there is some $s_0$ such that 
    
    Particularly, if such $s_0$ exists, it is unique and is a simple zero of $m_1(s)$.
\end{pp}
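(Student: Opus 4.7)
The plan is to work directly with the $2 \times 2$ Hermitian matrix $\widetilde{M}(s) = N_+(s) + N_-^T(s) = M(s)/|\xi'|_g$ from (\ref{M}), whose real eigenvalues are $m_1(s) \le m_2(s)$ by the convention in (\ref{def_m1_s}), and to establish two quantitative facts from which every part of the proposition follows: first, $\operatorname{tr} \widetilde{M}(s) > 0$ throughout $(0, \min c_{s,\pm})$; second, $\widetilde{M}'(s)$ is negative definite on the same interval.

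The first fact is immediate from the explicit form of $\widetilde{M}$: its trace equals $\sum_\pm \frac{(a_\pm(s) + b_\pm(s))\rho_\pm s^2}{1 - a_\pm(s) b_\pm(s)}$, and each summand is strictly positive because $0 < s < \min c_{s,\pm}$ forces $a_\pm, b_\pm \in (0,1)$ and hence $1 - a_\pm b_\pm > 0$. Since both eigenvalues are real, $m_1 + m_2 > 0$ implies that at most one of them can vanish; the convention $m_1 \le m_2$ forces that one to be $m_1$, which is the ``only $m_1$ can be zero'' claim.

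The second fact is the main technical step, and is essentially equivalent to the monotonicity statement in the proposition. By the standard perturbation formula for simple eigenvalues of Hermitian matrices,
\[
m_j'(s) = \langle v_j(s),\, \widetilde{M}'(s)\, v_j(s)\rangle
\]
for a corresponding normalized eigenvector $v_j(s)$, so negative definiteness of $\widetilde{M}'(s) = N_+'(s) + (N_-')^T(s)$ would give $m_j'(s) < 0$ for $j = 1, 2$. To verify this I would differentiate the entries of $N_\pm(s)$ using $a_\pm'(s) = -c_{s,\pm}^{-2}s/a_\pm$ and $b_\pm'(s) = -c_{p,\pm}^{-2}s/b_\pm$, then check that the resulting $2 \times 2$ Hermitian matrix has negative trace and positive determinant. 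The expected obstacle is the off-diagonal derivative, in which $\partial_s\bigl(\rho_\pm s^2/(1-a_\pm b_\pm)\bigr)$ competes in sign with the constant $2\mu_\pm$; one also has the complicating identity $\det N_\pm = R_\pm(s)/(1-a_\pm b_\pm)$, which shows $N_\pm$ itself changes signature as $s$ crosses the single-medium Rayleigh speed $c_{R,\pm}$, so positivity does not propagate trivially. A cleaner route, which I would cite rather than rederive, is that $N_\pm(s)$ coincides with the surface-impedance matrix of a half-space in the isotropic Stroh formalism, for which the monotonicity $dN_\pm/ds \prec 0$ is classical; this is the form of the argument in \cite{1985a, Chadwick1994}.

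With both facts in hand, $m_1$ is strictly decreasing on $(0, \min c_{s,\pm})$, so it has at most one zero $s_0$, which is exactly the uniqueness claim; $m_1'(s_0) < 0$ forces that zero to be simple. Via the factorization (\ref{detM_S}) and $m_2(s_0) > 0$, the same information transfers to $S'(s_0) \neq 0$, in exact parallel with the nonvanishing of $R'(c_R)$ used in (\ref{def_e0}) and (\ref{e0}) for Rayleigh waves; this is precisely what is needed to extract the Stoneley characteristic variety and to construct the Stoneley analog of the factor $e_0$, allowing the subsequent diagonalize-and-decouple scheme to proceed along the same lines as Section \ref{Rayleigh}.
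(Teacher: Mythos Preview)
Your proposal is correct and follows essentially the same route as the paper: positivity of $\Tr\widetilde M(s)$ forces at most one eigenvalue to vanish, and negative definiteness of $\widetilde M'(s)$ (equivalently of each $N'_\pm(s)$) forces the eigenvalues to be strictly decreasing, giving uniqueness and simplicity of any zero of $m_1$. The only substantive difference is that the paper actually carries out the verification of $N'_\pm \prec 0$ by hand---passing to the variable $\iota=s^2$ and checking $\det \widetilde N'_\pm(\iota)>0$, $\Tr \widetilde N'_\pm(\iota)<0$ directly---and also records the auxiliary fact that $N_\pm(0^+)$ is positive definite, whereas you opt to cite the surface-impedance monotonicity from \cite{1985a,Chadwick1994}. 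One small caution: your use of the perturbation formula $m_j'(s)=\langle v_j,\widetilde M'(s)v_j\rangle$ presupposes simple eigenvalues; this is harmless near any zero of $m_1$ (since $m_2>0$ there by the trace argument), but for the global monotonicity statement it is cleaner to invoke the min--max characterization, which gives strict decrease of $m_1$ and $m_2$ from $\widetilde M'\prec 0$ without any simplicity hypothesis.
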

\begin{proof}
    We claim the matrix $N_\pm(s)$ and their transposes satisfy 
    \begin{itemize}
        \item[(a)] the limit $N_\pm(0) \equiv \lim_{s \rightarrow 0 }N_\pm(s)$ exist and are positive definite,
        \item[(b)] for $0 < s < \min c_{s, \pm}$, the derivative $N'_\pm(s)$ is negative definite,
        \item[(c)] for $0 < s < \min c_{s, \pm}$, the trace $\Tr(N_\pm(s))$ is always positive.
    \end{itemize}  
    Then ${M}$ satisfy these conditions as well, which indicates its eigenvalues decrease as $s$ increases but their sum is always positive, i.e. at most one of them could be zero. The monotonic decreasing of eigenvalues implies the zero should be a simple one.
    
    Now we prove the claim. 
    For (a), we compute
    \begin{align*}
    \lim_{s \rightarrow 0 }\frac{\rho_\pm s^2 }{1 - a_\pm b_\pm} 
    &=\lim_{s \rightarrow 0 } \frac{2 \rho_\pm s}{-a_\pm'b_\pm - a_\pm b_\pm'} 
    = \lim_{s \rightarrow 0 } \frac{2 \rho_\pm }{-a_\pm''b_\pm -2 a_\pm'b_\pm'- a_\pm b_\pm''} \\
    &=\frac{2\mu_\pm(2\mu_\pm+\lambda_\pm)}{3\mu_\pm+\lambda_\pm}
    \equiv c_\pm.
    \end{align*}
    Assuming $\mu_\pm, \lambda_\pm >0$, we have $ \frac{4}{3} \mu_\pm <c_\pm < 2\mu_\pm$. 
    Then $\Tr(N_\pm(0)) = 2 c_\pm >0$ and $\det(N_\pm(0)) = 4 \mu_\pm (c_\pm - \mu_\pm) > 0 $.
    
    To prove (b), for convenience we change the variable $\iota = s^2$ 
    and $\widetilde{N}_\pm(\iota) = N_\pm(\sqrt{\iota})$ with $s >0$.
    Then it is sufficient to show $\widetilde{N}_\pm'(\iota)>0$. 
    Indeed we have
    \[
    \widetilde{N}_\pm(\iota)
    =
    \begin{pmatrix} 
    \tilde{b}_\pm(\iota) \kappa_\pm(\iota) & - i(2 \mu_\pm - \kappa_\pm(\iota) ) \\
    i(2 \mu_\pm - \kappa_\pm(\iota) ) &  \tilde{a}_\pm (\iota)\kappa_\pm(\iota) 
    \end{pmatrix},
    \]
    where we set 
    \[
    \tilde{a}_\pm(\iota)= \sqrt{1 - c_{s,\pm}^{-2} \iota},
    \quad \tilde{b}_\pm(\iota) = \sqrt{1 - c_{p,\pm}^{-2} \iota}, \quad \kappa_\pm(\iota) = \frac{\rho_\pm \iota }{1 - \tilde{a}_\pm \tilde{b}_\pm} .
    \]
    Then
    \[
    \widetilde{N}'_\pm(\iota)
    =
    \begin{pmatrix} 
    \tilde{b}'_\pm(\iota) \kappa_\pm(\iota)  + \tilde{b}_\pm(\iota) \kappa_\pm'(\iota) &  i 2 \kappa_\pm'(\iota) ) \\
    -i 2 \kappa_\pm'(\iota) ) &  \tilde{a}'_\pm (\iota)\kappa_\pm(\iota) + \tilde{a}_\pm (\iota)\kappa_\pm'(\iota) 
    \end{pmatrix}
    \]
    and
    \begin{align*}
    \kappa_\pm' &= \frac{\rho_\pm}{(1 - \tilde{a}_\pm \tilde{b}_\pm)}(1 - \frac{\tilde{a} _\pm}{2\tilde{b} _\pm}- \frac{\tilde{b} _\pm}{2\tilde{a}_\pm}),\\
    \tilde{a}'_\pm(\iota) k(\iota )&=\frac{-\rho_\pm c_{s,\pm}^{-2} \iota}{2\tilde{a}_\pm(1 - \tilde{a}_\pm \tilde{b}_\pm)}
    = \frac{\rho_\pm}{2(1 - \tilde{a}_\pm \tilde{b}_\pm)}(\tilde{a} _\pm- \frac{1}{\tilde{a}_\pm}).
    \end{align*}
    Therefore, the determinant and the transpose of $\widetilde{N}_\pm'(\iota)$ are
    \begin{align*}
    \det(\widetilde{N}_\pm'(\iota)) &= (\kappa_\pm')^2 \tilde{a}_\pm \tilde{b}_\pm + \kappa_\pm \kappa_\pm'\tilde{a}'_\pm \tilde{b}_\pm + \kappa_\pm \kappa_\pm'\tilde{a}_\pm \tilde{b}'_\pm - (\kappa_\pm')^2\\
    &=\frac{\rho_\pm^2}{(1 - \tilde{a}_\pm \tilde{b}_\pm)^2} \frac{1}{2 \tilde{a}_\pm \tilde{b}_\pm }(\tilde{a}_\pm - \tilde{b}_\pm )^2 > 0,\\
    \Tr(\widetilde{N}_\pm'(\iota)) &= \kappa_\pm'(\tilde{a}_\pm + \tilde{b}_\pm) + \kappa_\pm(\tilde{a}'_\pm + \tilde{b}_\pm') \\
    &= -\frac{\rho_\pm(\tilde{a}_\pm + \tilde{b}_\pm)}{2\tilde{a}_\pm  \tilde{b}_\pm(1 - \tilde{a}_\pm \tilde{b}_\pm)^2}((\tilde{a}_\pm - \tilde{b}_\pm)^2 + (\tilde{a}_\pm  \tilde{b}_\pm + 1)^2) <0,
    \end{align*}
    which indicates $\widetilde{N}_\pm'(\iota)$ is negative definite. 
    For (c), obviously we have
    \[
    \Tr(N_\pm)(s) = \frac{(a_\pm(s)+ b_\pm(s)) \rho_\pm s^2 }{1 - a_\pm b_\pm} >0 .
    \]
\end{proof}

If $S(s)  \neq 0$ for all  $0 < s < \min c_{s, \pm}$, 
then $\Lambda+ - \Lambda_-$ is an elliptic $\Psi$DO and the microlocal solution to (\ref{lambda_2}) is $f^- = (\Lambda_+ - \Lambda_-)^{-1} (l_2 - \Lambda_+ l_1)$. The singularities does not propagate.
Otherwise, if there exists $0 < c_{ST}  < \min c_{s, \pm}$ such that ${m}_1(c_{ST}) = 0$, then (\ref{transmission}) has a nontrivial microlocal solution that propagates singularities, analogously to the case of Rayleigh waves.

In the following suppose there exists a $c_{ST}$ such that $S(c_{ST}) = 0$. 
By Proposition \ref{simplezero}, this zero is simple so by the implicit function theorem it is a smooth function $s_{ST}(t,x')$ in a small neighborhood of a fixed point. 
This time the eigenvalues of the system is simply $\widetilde{m}_j = m_j(t,x',\tau,\xi')$ for $j=1,2,3$. 
Then we can write $\widetilde{m}_1(t,x',\tau,\xi')$ as a product similar to what we have before
\begin{align}\label{def_e0_s}
\widetilde{m}_1(t,x',\tau,\xi') =   e_0(t,x',\tau,\xi') i (\tau -c_{ST}(t,x') {{|\xi'|}_g} ),
\end{align}
where 
\begin{align}\label{e0_S}
e_0(t,x',\tau, \xi') = \frac{m_1(t,x',\tau, \xi')}{i (\tau - c_{ST}(t,x')|\xi'|_g)}  =\frac{m_1(s)}{i (s - c_{ST}(t,x'))}
\end{align}
To decouple the system as what we did in Section \ref{Rayleigh}, we need the following claim. 
Notice even without this claim, the analysis still holds since with only ${m}_1$ vanishing the last two eigenvalues always give us an elliptic $2$ by $2$ system.
\begin{claim}
    The three eigenvalues $\widetilde{m}_1(t,x',\tau, \xi'), \widetilde{m}_1(t,x',\tau, \xi'), \widetilde{m}_1(t,x',\tau, \xi')$
    of the matrix $\sigma_p(\Lambda_+ - \Lambda_-)$  are distinct near $s = c_{ST}$.
\end{claim}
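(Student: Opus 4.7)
The plan is to mirror the structure of the analogous claim in the Rayleigh section: the partial diagonalization by $V_0$ already splits off $\widetilde{m}_3 = \mu_+\alpha_+ + \mu_-\alpha_-$ from the $2\times 2$ block $M$ whose eigenvalues are $\widetilde{m}_1,\widetilde{m}_2$. Proposition \ref{simplezero} gives $\widetilde{m}_1\neq\widetilde{m}_2$ in a neighborhood of $s=c_{ST}$ (since at $s=c_{ST}$ one has $\widetilde{m}_1=0$ while $\widetilde{m}_2=\Tr M/|\xi'|_g>0$, and both depend continuously on $s$), so the remaining work is to separate $\widetilde{m}_3$ from each of $\widetilde{m}_1,\widetilde{m}_2$ near $s=c_{ST}$.

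The separation from $\widetilde{m}_1$ is immediate: in the elliptic region $a_\pm(s)>0$, hence $\widetilde{m}_3=|\xi'|_g(\mu_+ a_+(s)+\mu_- a_-(s))$ is strictly positive with a positive lower bound, while $\widetilde{m}_1(c_{ST})=0$. By continuity $\widetilde{m}_1\neq\widetilde{m}_3$ in a neighborhood.

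For $\widetilde{m}_2$ versus $\widetilde{m}_3$, I would evaluate both at $s=c_{ST}$. There $\widetilde{m}_1=0$, so $\widetilde{m}_2=\widetilde{m}_1+\widetilde{m}_2=\Tr M$, which by \eqref{M} equals
\begin{equation*}
|\xi'|_g\left(\frac{(a_+ + b_+)\rho_+ s^2}{1 - a_+ b_+} + \frac{(a_- + b_-)\rho_- s^2}{1 - a_- b_-}\right).
\end{equation*}
The goal is to show this exceeds $|\xi'|_g(\mu_+ a_+ + \mu_- a_-)$, and it suffices to check the inequality termwise:
\begin{equation*}
\frac{(a_\pm + b_\pm)\rho_\pm s^2}{1 - a_\pm b_\pm} > \mu_\pm a_\pm.
\end{equation*}
Using $\rho_\pm s^2=\mu_\pm(1-a_\pm^2)$ (which follows from $a_\pm^2=1-s^2/c_{s,\pm}^2$ and $c_{s,\pm}^2=\mu_\pm/\rho_\pm$) and clearing denominators, this reduces after cancellation to $b_\pm>a_\pm^3$. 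Since $c_{p,\pm}>c_{s,\pm}$ gives $b_\pm>a_\pm$, and $0<a_\pm<1$ in the elliptic region (recall $c_{ST}>0$) gives $a_\pm>a_\pm^3$, the inequality holds. Thus $\widetilde{m}_2(c_{ST})>\widetilde{m}_3(c_{ST})$, and by continuity the strict inequality persists in a neighborhood.

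The main obstacle is really the algebraic manipulation for $\widetilde{m}_2>\widetilde{m}_3$: unlike the Rayleigh case where $m_3$ carried the explicit factor $|\xi'|_g^2-\alpha\beta$ that vanished along $\Sigma_R$, here neither eigenvalue vanishes at $c_{ST}$, so a direct numerical separation is needed. The trick above (evaluate at $s=c_{ST}$ so that $\widetilde{m}_2$ collapses to $\Tr M$, then reduce to the elementary inequality $b_\pm>a_\pm^3$) is the cleanest way around it; everything else is continuity.
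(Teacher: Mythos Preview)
Your argument is correct and follows essentially the same route as the paper: both reduce the only nontrivial separation $\widetilde m_2\neq\widetilde m_3$ to the termwise inequality $(a_\pm+b_\pm)\rho_\pm s^2 > \mu_\pm a_\pm(1-a_\pm b_\pm)$, and indeed the paper's final numerator $a_\nu\rho_\nu s^2+\mu_\nu(b_\nu-a_\nu)$ equals $\mu_\nu(b_\nu-a_\nu^3)$ after the same substitution $\rho_\nu s^2=\mu_\nu(1-a_\nu^2)$, so your inequality $b_\pm>a_\pm^3$ is literally the paper's conclusion pushed one algebraic step further. The only cosmetic difference is that the paper establishes $\widetilde m_1+\widetilde m_2-\widetilde m_3>0$ for all $s$ in the range and then specializes, whereas you evaluate directly at $s=c_{ST}$ and invoke continuity; both suffice for the claim.
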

\begin{proof}
    Obviously near $s_0$ we have $\widetilde{m}_1 \neq \widetilde{m}_2$ and $\widetilde{m}_1 \neq \widetilde{m}_3$.
    The values of $\widetilde{m}_2, \widetilde{m}_3$ may coincide but near $s_0$ they are separate by the following calculation
    \begin{align*}
    &\widetilde{m}_1(t,x',\tau, \xi') + \widetilde{m}_2(t,x',\tau, \xi') -\widetilde{m}_3(t,x',\tau, \xi') 
    = \Tr(M) - i (\mu_+ \alpha_+  + \mu_- \alpha_- )\\
    & =  {{|\xi'|}_g} \sum_{\nu = \pm} \frac{ (a_\nu(s)+ b_\nu(s))\rho_\nu s^2 - \mu_\nu a_\nu (1 - a_\nu b_\nu)}{1 - a_\nu b_\nu} \\
    & =  {{|\xi'|}_g} \sum_{\nu = \pm} \frac{ (a_\nu(s)+ b_\nu(s))\rho_\nu s^2 - \mu_\nu a_\nu(s)  +  \mu_\nu b_\nu(s)(1 - \frac{\rho_\nu}{\mu_\nu} s^2)}{1 - a_\nu(s) b_\nu(s)} \\
    & =  {{|\xi'|}_g} \sum_{\nu = \pm} \frac{ a_\nu(s)\rho_\nu s^2 + \mu_\nu (b_\nu(s) - a_\nu(s))  }{1 - a_\nu b_\nu} >0, \\
    \end{align*}
    where the last inequality holds, since by (\ref{def_ab_S}) we have
    $0 <a_\nu < b_\nu<1$ and $\widetilde{m}_1 = 0$ at $s = c_{ST}$.
\end{proof}
More specifically, this time we define
\[
V_1(t,x',\tau, \xi') = 
\begin{pmatrix} 
{ M_{21}}/{k_1} 
& {M_{21}}/{k_2}   & 0\\
(M_{11} -  \widetilde{m}_1)/{k_1} 
& (M_{11} -  \widetilde{m}_2)/{k_2}  & 0\\
0 & 0 & 1
\end{pmatrix},
\] 
where $M_{ij}$ is the entry of $M$ in (\ref{M}), for $i,j = 1,2$ and we define
\begin{align}\label{def_k_s}
k_j = \sqrt{|M_{11} - \widetilde{m}_j|^2 + |M_{21}|^2}, \quad k_j(s) = k_j/|\xi'|_g^2.
\end{align}
Then 
\begin{align}\label{W_s}
W(t,x',\tau, \xi') &=  V_0(t,x',\tau, \xi')V_1(t,x',\tau, \xi') \nonumber \\
&= 
\begin{pmatrix} 
{M_{21}\xi_1}/{({{|\xi'|}_g} k_1)} 
& {-M_{12}\xi_1}/{({{|\xi'|}_g} k_2)}   & {-\xi_2}/{{{|\xi'|}_g}}\\
{M_{21}\xi_2}/{({{|\xi'|}_g} k_1)}
& {-M_{12}\xi_2}/{({{|\xi'|}_g} k_2)}   & {\xi_1}/{{{|\xi'|}_g}}\\
(M_{11} -  \widetilde{m}_1)/{k_1} 
& (M_{11} -  \widetilde{m}_2)/{k_2}   & 0\\
\end{pmatrix}.
\end{align}

Let the operators $e_0(t,x', D_t, D_{x'}) \in \Psi^0$  with symbol 
$e_0(t,x', \tau, \xi')$ in (\ref{def_e0_s})
and $\widetilde{m}_j(t,x', D_t, D_{x'})\in \Psi^1$ with symbols $\widetilde{m}_j(t,x', \tau, \xi')$, for $j= 2,3$. 
By \cite{Taylor2017}, there exists an elliptic $\Psi$DO of order zero $\widetilde{W}(t,x', D_t, D_{x'})$ 
with the principal symbol equal to $W(t,x',\tau, \xi')$, such that
near some fixed $(t,x', \tau, \xi') \in \Sigma_R$, the operator $\Lambda^+ - \Lambda^-$ can be fully decoupled as 
\begin{align*}
\widetilde{W}^{-1}(\Lambda^+ - \Lambda^-) \widetilde{W}  =
\begin{pmatrix} 
e_0(\partial_t - i c_{ST}(t,x')\sqrt{-\Delta_{x'}}) + r_1 & 0 & 0\\
0 & \widetilde{m}_2 + r_2 & 0\\
0 & 0 & \widetilde{m}_3 + r_3
\end{pmatrix} 
\mod \Psi^{-\infty},
\end{align*}
where 
$r_1(t,x', D_t, D_{x'}), r_2(t,x', D_t, D_{x'}), r_3(t,x', D_t, D_{x'}) \in \Psi^0$
are the lower order term.
If we define 
\begin{align}\label{def_lambda_s}
r(t, x', D_t, D_x')= e_0^{-1} r_1 \in \Psi^0,
\end{align} 
in what follows,
then the first entry in the first row can be written as 
\begin{align}\label{hyperbolic_eq_S}
e_0(\partial_t - i c_{ST}(t,x')\sqrt{-\Delta_{x'}} + r(t, x', D_t, D_x')).
\end{align} 

\subsection{The Cauchy problem and the polarization}\label{section_S_cauchy}
In this subsection, similar to Subsection \ref{section_R_cauchy}, 
we first assume that the source exists for a limited time for $t<0$ and we have the Cauchy data $f^-|_{t=0}$ at $t=0$, i.e.
\begin{equation}\label{dn_cauchy_S}
(\Lambda_+ - \Lambda_-) f^- = 0, \text{ for } t > 0, \quad 
f^-|_{t=0} \text{ given }.
\end{equation}
Recall the diagonalization of $\Lambda_+ - \Lambda_-$ in before. 
Let 
\begin{align*}
h^- =
\begin{pmatrix}
h^-_1\\
h^-_2\\
h^-_3
\end{pmatrix} 
=
\widetilde{W}^{-1}
\begin{pmatrix}
f^-_1\\
f^-_2\\
f^-_3
\end{pmatrix} 
=  \widetilde{W}^{-1} f^-.
\end{align*}
The homogeneous equation $(\Lambda_+ - \Lambda_-) f^- = 0$ implies 
\begin{align}\label{eg_fh_S}
f^- = \widetilde{W} h^-
= \begin{pmatrix}
\widetilde{W}_{11}\\
\widetilde{W}_{21}\\
\widetilde{W}_{31}\\
\end{pmatrix} h^-_1 \mod C^\infty
\Leftarrow h^-_2 = h^-_3 =0 \mod C^\infty,
\end{align}
where $h^-_1$ solves the homogeneous first-order hyperbolic equation in (\ref{hyperbolic_eq_S}).
Notice in this case the hyperbolic operator is $\partial_t - i c_{ST}(t,x'){{|\xi'|}_g}  + r(t, x', D_t, D_{x'})$ with $r$ given by (\ref{def_lambda_s}).
If we have the initial condition $h^-_{1,0}\equiv h^-_1|_{t=0}$, then by the construction in Subsection \ref{subsec_he}, then
\begin{align}\label{eg_h1_S}
h^-_1(t,x') = \int a(t,x',\xi') e^{i\varphi(t,x',\xi')}  \hat{h}^-_{1,0}(\xi') \diff \xi' \mod C^\infty,
\end{align}
where the phase function $\varphi$ solves the eikonal equation (\ref{eikonal});
the amplitude $a  = a_0 + \sum_{j \leq -1}a_j$ solves the transport equation (\ref{transport_h_a0}) and (\ref{transport_h_aj}) with
$
\gamma  = ( \sum_{|\alpha|=2} \big(\frac{1}{\alpha!} c_R(t,x')D_{\xi'}^\alpha{{|\xi'|}_g}  D_{x'}^\alpha \varphi-r_0(t,x',\lambda_1, \nabla_{x'}\varphi)\big).
$

By the same analysis before, the initial condition of $f^-$ is related to that of $h^-$ by
\begin{align}\label{compatible_S}
f^-|_{t=0} = 
\begin{pmatrix}
\widetilde{W}_{11,0}\\
\widetilde{W}_{21,0}\\
\widetilde{W}_{31,0}\\
\end{pmatrix} 
h^-_{1,0} \mod C^\infty,
\end{align}
where the principal symbols are 
\begin{align*}
&\sigma_p(\widetilde{W}_{11,0}) = \sigma_p({W}_{11}) (0, x', c_R{{|\xi'|}_g} , \xi'), \qquad
\sigma_p(\widetilde{W}_{21,0}) = \sigma_p({W}_{21})(0, x', c_R{{|\xi'|}_g} , \xi'),\\
&\sigma_p(\widetilde{W}_{31,0})= \sigma_p({W}_{31})(0, x', c_R{{|\xi'|}_g} , \xi'). 
\end{align*}
We have the following theorem as an analog to Theorem \ref{thm_cauchy}.
\begin{thm}\label{thm_cauchy_S}
    Suppose $f^-(0,x')$ satisfies \text{(\ref{compatible_S})} with some $h^-_{1,0} \in \mathcal{E}'$. Then microlocally the homogeneous problem with Cauchy data \text{(\ref{dn_cauchy_S})} admits a unique microlocal solution
    \begin{align}\label{solution_cauchy_S}
    f^- &= \begin{pmatrix}
    \widetilde{W}_{11}\\
    \widetilde{W}_{21}\\
    \widetilde{W}_{31}\\
    \end{pmatrix}
    \int a(t,x',\xi') e^{i\varphi(t,x',\xi')}  \hat{h}^-_{1,0}(\xi') \diff \xi' \mod C^\infty \nonumber\\
    &=  \int
    \underbrace{ 
        \begin{pmatrix} 
        i \zeta_1 \nabla_{x_1} \varphi /{{|\nabla_{x'} \varphi |}_g}  \\
        i \zeta_1 
        \nabla_{x_2} \varphi /{{|\nabla_{x'} \varphi |}_g} \\
        \zeta_2
        \end{pmatrix} 
        \frac{a_0(t,x',\xi') e^{i\varphi(t,x',\xi')}}{k_1(c_{ST})}
    }_{\mathcal{P}}
    \hat{h}^-_{1,0}(\xi') \diff \xi'
    + \text{ lower order terms},
    \end{align}
    where we define
    \begin{align*}
    &\zeta_1 = (2 (\mu_+ - \mu_-) - (\frac{\rho_+ c_{ST}^2 }{1 - a_+(c_{ST}) b_+(c_{ST})} -\frac{\rho_- c_{ST}^2 }{1 - a_-(c_{ST}) b_-(c_{ST})} ) ),\\
    &\zeta_2 = (\frac{b_+(c_{ST}) \rho_+ c_{ST}^2 }{1 - a_+(c_{ST}) b_+v} + \frac{b_-(c_{ST}) \rho_- c_{ST}^2 }{1 - a_- (c_{ST})b_-(c_{ST})})
    \end{align*}
    as smooth functions w.r.t. $t, x', \xi'$ with $c_{ST}$ be the Stoneley speed, $a_+(s), b_+(s)$ defined in (\ref{def_ab_S}), $k_1(s)$ defined in (\ref{def_k_s}).
\end{thm}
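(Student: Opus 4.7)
The plan is to mirror the proof of Theorem \ref{thm_cauchy} for Rayleigh waves almost verbatim, since the diagonalization structure is identical: after applying $\widetilde{W}^{-1}$, the homogeneous equation $(\Lambda_+-\Lambda_-)f^- = 0$ decouples microlocally into one first-order hyperbolic equation for $h_1^-$ with characteristic speed $c_{ST}$, together with two elliptic equations for $h_2^-,h_3^-$. Ellipticity of $\widetilde{m}_2 + r_2$ and $\widetilde{m}_3 + r_3$ forces $h_2^-,h_3^-\in C^\infty$, so only $h_1^-$ carries the singularities and $f^- = \widetilde{W}(h_1^-,0,0)^T$ modulo $C^\infty$, which yields (\ref{eg_fh_S}).

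First I would invoke the construction of Subsection \ref{subsec_he} to solve the homogeneous first-order hyperbolic equation
\begin{equation*}
(\partial_t - ic_{ST}(t,x')\sqrt{-\Delta_{x'}} + r(t,x',D_t,D_{x'}))h_1^- = 0, \qquad h_1^-|_{t=0} = h_{1,0}^-,
\end{equation*}
producing the oscillatory integral representation (\ref{eg_h1_S}), with phase $\varphi$ solving the eikonal equation (\ref{eikonal}) and amplitude $a\sim\sum_{j\le 0} a_j$ determined by the transport equations (\ref{transport_h_a0}) and (\ref{transport_h_aj}). By Remark \ref{rm_heq}, the local-in-$t$ construction is propagated beyond small times by patching, so $h_1^-$ is globally defined as a Lagrangian distribution.

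Second, I would recover $f^- = \widetilde{W} h^-$ mod $C^\infty$. Restricting to $t=0$ and applying the Fundamental Lemma (Lemma \ref{fundl}) with $\varphi(0,x',\xi') = x'\cdot\xi'$, one obtains three zero-order $\Psi$DOs $\widetilde{W}_{i1,0}$ with principal symbols $\sigma_p(W_{i1})(0,x',c_{ST}|\xi'|_g,\xi')$. These are elliptic on $\Sigma_{ST}$ by the explicit form (\ref{W_s}) and the non-vanishing of $k_1$. Hence the assumed compatibility (\ref{compatible_S}) is exactly the condition that $h_{1,0}^-$ exists and is determined uniquely (microlocally) from $f^-|_{t=0}$, giving existence and uniqueness.

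Third, to extract the explicit leading order displayed in (\ref{solution_cauchy_S}), I would apply Lemma \ref{fundl} once more to $\widetilde{W} h^-$, which substitutes $\tau \mapsto \varphi_t$ and $\xi'\mapsto\nabla_{x'}\varphi$ in $\sigma_p(W)$. On the characteristic surface $\widetilde{m}_1=0$, the first column of $\sigma_p(W)$ simplifies: reading off the entries of $M = |\xi'|_g(N_+ + N_-^T)$ at $s=c_{ST}$ gives $M_{21} = i|\xi'|_g\,\zeta_1$ and $M_{11} - \widetilde{m}_1 = |\xi'|_g\,\zeta_2$, which after inserting the replacement $\xi_j/|\xi'|_g \mapsto \nabla_{x_j}\varphi/|\nabla_{x'}\varphi|_g$ yields precisely the vector $\mathcal{P}$ in (\ref{solution_cauchy_S}).

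The main obstacle will be the symbolic bookkeeping on $\Sigma_{ST}$: one must verify the correct homogeneity degrees to justify that $k_j(s)$ behaves as a symbol of order zero after the rescaling in (\ref{def_k_s}), and confirm that the columns of $W$ remain well-defined and mutually orthonormal in the principal part at the characteristic. Apart from this verification and the patching argument from Remark \ref{rm_heq}, every remaining step parallels the Rayleigh argument of Subsection \ref{section_R_cauchy} line by line.
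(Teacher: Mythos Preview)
Your proposal is correct and follows essentially the same approach as the paper. The paper itself offers no separate proof for Theorem \ref{thm_cauchy_S}; it simply sets up (\ref{eg_fh_S})--(\ref{compatible_S}) in direct parallel to Subsection \ref{section_R_cauchy} and then states the theorem ``as an analog to Theorem \ref{thm_cauchy}'', which is exactly what you outline, including the identification $M_{21}=i|\xi'|_g\zeta_1$ and $M_{11}-\widetilde{m}_1=|\xi'|_g\zeta_2$ on $\Sigma_{ST}$ needed for the leading term.
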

This theorem describes the microlocal polarization of the displacement of a Stoneley wave on the intersurface.
Up to lower order terms, the displacement $f^-$ can also be regarded as a supposition of  $\Re(\mathcal{P})$ and $\Im(\mathcal{P})$, each of which has a elliptical retrograde motion as we discussed before. 
Indeed, the leading term of $f$ has a similar pattern of that of Rayleigh waves in Theorem (\ref{thm_cauchy}), except different scalar functions in each component. 


\subsection{The inhomogeneous problem}
In this subsection, first we are going to microlocally solve the inhomogeneous problem
\begin{align}\label{dn_s}
(\Lambda_+ - \Lambda_-) f^- = q - \Lambda^+ l, \text{ for } t>0, \quad f^-|_{t=0} = 0,
\end{align}
and the then the microlocal solution to (\ref{transmission}) and (\ref{elastic_trans}) can be derived as we stated before.
We perform the same analysis for the Rayleigh wave in the previous section.

Let 
\begin{align}\label{def_fl_s}
&h^- =
\begin{pmatrix}
h^-_1\\
h^-_2\\
h^-_3
\end{pmatrix} 
=
\widetilde{W}^{-1}
\begin{pmatrix}
f^-_1\\
f^-_2\\
f^-_3
\end{pmatrix} 
=  \widetilde{W}^{-1} f^-,
&\tilde{l} 
=
\begin{pmatrix}
\tilde{l}_1\\
\tilde{l}_2\\
\tilde{l}_3
\end{pmatrix} 
= \widetilde{W}^{-1}(l_2 - \Lambda^+ l_1),
\end{align} 
where $u_j$ is the component of any vector valued distribution $u$ for $j = 1, 2, 3$.
Solving $\Lambda f = l \mod C^\infty$ is microlocally equivalent to solving the following system
\begin{equation}\label{hyperbolic_eq_s}
\begin{cases}
(\partial_t - i c_{ST}(t,x')\sqrt{-\Delta_{x'}} + r(t, x', D_t, D_x')) h_1^- = e_0^{-1}\tilde{l}_1, &\mod C^\infty,\\
(\widetilde{m}_2 + r_2)h_2^- = \tilde{l}_2,  &\mod C^\infty,\\
(\widetilde{m}_3 + r_3)h_3^- = \tilde{l}_3, &\mod C^\infty.
\end{cases}
\end{equation}
In the last two equations, the operators $\widetilde{m}_j + r_j$ are elliptic so we have $h_j^- = (\widetilde{m}_j + r_j)^{-1} \tilde{l}_j, \mod C^\infty$ for $j=2,3$.
The first equation is a first-order inhomogeneous hyperbolic equation with lower order term, which can be solved by Duhamel's principle.
We apply Proposition \ref{prop_solution_ih} to have
\begin{align}\label{solution_firsteq}
h^-_1(t,x') & = \int H(t-s) a(t-s, x', \xi') e^{i(\varphi(t-s, x', \xi') - y' \cdot \xi')} ((I - \tilde{e})^{-1} e_0^{-1} \tilde{l}_1)(s,y')  \diff y' \diff \xi' \diff s \nonumber \\
& =  L_{\varphi, a}(I - \tilde{e})^{-1} e_0^{-1} \tilde{l}_1,
\end{align}
where the phase function $\varphi(t,x',\xi')$ and the amplitude $a(t,x',\xi')$ are given by Proposition \ref{prop_solution_ih} with the hyperbolic operator being $\partial_t - i c_{ST}(t,x')\sqrt{-\Delta_{x'}} + r(t, x', D_t, D_x')$; and $\tilde{e}$ is defined in (\ref{def_fl}) for the new hyperbolic operator.
We also write the solution as $h_1^- = L_{\varphi,a} (I - \tilde{e})^{-1} e_0^{-1} \tilde{l}_1$ by (\ref{def_L}). This proves the following theorem, as an analog to Theorem \ref{thm_ihm}, \ref{thm_ihm_highorder}.

\begin{thm}\label{thm_ihm_S}
    Suppose $l(t,x'), q(t,x') \in \mathcal{E}'((0,T) \times \mathbb{R}^2, \mathbb{C}^3)$ microlocally supported in the elliptic region.
    The inhomogeneous system (\ref{dn_s}) with zero initial condition at $t=0$
    admits a unique microlocal solution
    \begin{align}\label{solution_f_s}
    \begin{pmatrix} 
    f_1^-\\
    f_2^-\\
    f_3^-
    \end{pmatrix} 
    =
    \widetilde{W}
    \begin{pmatrix} 
    L_{\varphi, a}(I - \tilde{e})^{-1} e_0^{-1} \tilde{l}_1 \\
    (\widetilde{m}_2 + r_2)^{-1} \tilde{l}_2\\
    (\widetilde{m}_3 + r_3)^{-1} \tilde{l}_3
    \end{pmatrix},
    \text{ with }
    \begin{pmatrix} 
    \tilde{l}_1\\
    \tilde{l}_2\\
    \tilde{l}_3
    \end{pmatrix}
    =
    \widetilde{W}^{-1}(q - \Lambda^+ l)
    \mod C^{\infty},
    \end{align}
    where $\widetilde{W}$ has the principal symbol in (\ref{W_s}), $e_0$ defined in (\ref{def_e0_s}),
    and $\tilde{e}$ is constructed as in (\ref{etilde}).
    Then the microlocal solution to the transmission problem (\ref{elastic_trans}) can be constructed 
    as evanescent modes from the boundary value $f^-$ and $f^+= l + f^-$.
    In particular, for $t \geq T$, the displacement on the boundary has the leading term
    \begin{align*}
    f^- &= 
    \begin{pmatrix} 
    \widetilde{W}_{11}\\
    \widetilde{W}_{21}\\
    \widetilde{W}_{31}\\
    \end{pmatrix}
    F_{\varphi,a} (I - \tilde{e})^{-1} e_0^{-1} \tilde{l}_1 \\
    & = 
    \int 
    \begin{pmatrix} 
    i \zeta_1 \nabla_{x_1} \varphi /{{|\nabla_{x'} \varphi |}_g} \\
    i \zeta_1 \nabla_{x_2} \varphi /{{|\nabla_{x'} \varphi |}_g} \\
    \zeta_2\\
    \end{pmatrix}
    \frac{e^{i\varphi} a_0(t-s, x', \xi')}{m_1'(c_{ST})  k_1(c_{ST})}  
    \hat{\tilde{l}}_1(s, \xi')
    \diff s
    \diff \xi'
    + \text{ lower order terms },
    \end{align*}
    where we define
    \begin{align*}
    &\zeta_1 = (2 (\mu_+ - \mu_-) - (\frac{\rho_+ c_{ST}^2 }{1 - a_+(c_{ST}) b_+(c_{ST})} -\frac{\rho_- c_{ST}^2 }{1 - a_-(c_{ST}) b_-(c_{ST})} ) ),\\
    &\zeta_2 = (\frac{b_+(c_{ST}) \rho_+ c_{ST}^2 }{1 - a_+(c_{ST}) b_+v} + \frac{b_-(c_{ST}) \rho_- c_{ST}^2 }{1 - a_- (c_{ST})b_-(c_{ST})})
    \end{align*}
    as smooth functions w.r.t. $t, x', \xi'$
    with $c_{ST}$ be the Stoneley speed, $m_1(s), k_1(s)$ defined in (\ref{def_m1_s}, \ref{def_k_s}).
\end{thm}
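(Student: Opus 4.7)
The plan is to mirror the proof of Theorems \ref{thm_ihm} and \ref{thm_ihm_highorder} for Rayleigh waves, replacing the Rayleigh-case diagonalization of $\Lambda$ by the Stoneley-case diagonalization of $\Lambda^+-\Lambda^-$ established in the previous subsection. First I would observe that solving (\ref{dn_s}) microlocally is equivalent, via $h^-=\widetilde{W}^{-1}f^-$ and $\tilde l=\widetilde{W}^{-1}(q-\Lambda^+ l)$, to solving the decoupled system (\ref{hyperbolic_eq_s}) with zero initial data. The second and third equations involve the elliptic operators $\widetilde{m}_2+r_2$ and $\widetilde{m}_3+r_3$ (both elliptic since $\widetilde{m}_2,\widetilde{m}_3$ are bounded away from zero near $\Sigma_{ST}=\{\tau=c_{ST}|\xi'|_g\}$), so their microlocal inverses give $h_j^-=(\widetilde{m}_j+r_j)^{-1}\tilde l_j$ modulo $C^\infty$ for $j=2,3$.

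The first equation in (\ref{hyperbolic_eq_s}) is a first order inhomogeneous hyperbolic equation with lower order term of exactly the form (\ref{iheq_r}), the source $e_0^{-1}\tilde l_1$ being microlocally supported in the elliptic region since $\widetilde{W}^{-1}, e_0^{-1}$ and $\Lambda^+$ are $\Psi$DOs and $l,q$ are. Applying Proposition \ref{prop_solution_ih} with the hyperbolic operator $\partial_t-ic_{ST}(t,x')\sqrt{-\Delta_{x'}}+r(t,x',D_t,D_{x'})$ produces the microlocal solution $h_1^-=L_{\varphi,a}(I-\tilde e)^{-1}e_0^{-1}\tilde l_1$, where $\varphi$ solves the eikonal equation (\ref{eikonal}) with $\lambda_1=c_{ST}|\xi'|_g$ and $a$ solves the transport equations (\ref{transport_h_a0}), (\ref{transport_h_aj}). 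Uniqueness up to $C^\infty$ follows from the same symmetric hyperbolic argument cited at the end of the proof of Proposition \ref{prop_solution_ih}. Assembling $f^-=\widetilde{W}h^-$ then yields (\ref{solution_f_s}); the construction of $u^\pm$ as evanescent modes from $f^-$ and $f^+=l+f^-$ is the word-for-word transplant of Section \ref{u_construct} into the domains $\Omega_\pm$, using that the Cauchy data on $\Gamma$ of each side is microlocally in the elliptic region.

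For the large time $t\geq T$ asymptotics, since $\tilde l_2,\tilde l_3$ are supported in $(0,T)\times\mathbb{R}^2$ and $(\widetilde{m}_j+r_j)^{-1}$ are $\Psi$DOs, the components $h_2^-,h_3^-$ are smooth for $t\geq T$, so $f^-$ reduces microlocally to $(\widetilde{W}_{11},\widetilde{W}_{21},\widetilde{W}_{31})^T F_{\varphi,a}(I-\tilde e)^{-1}e_0^{-1}\tilde l_1$, exactly as in (\ref{larger_t}). Extracting the leading amplitude is then a direct application of the Fundamental Lemma \ref{fundl}: the principal symbol of the scalar factor $e_0^{-1}$ evaluated at $(\tau,\xi')=(\partial_t\varphi,\nabla_{x'}\varphi)$ combines with the first column of $\sigma_p(\widetilde{W})=W(t,x',\tau,\xi')$ from (\ref{W_s}) evaluated along $\Sigma_{ST}$, where $s=c_{ST}$, $\widetilde{m}_1=0$, and by (\ref{M}) the entries $M_{11},M_{21}$ evaluate to $|\xi'|_g\zeta_2$ and $-i|\xi'|_g\zeta_1$ respectively, with $k_1(c_{ST})$ providing the normalization. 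Multiplying by $\hat{\tilde l}_1$ and the phase gives the formula stated.

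The main obstacle, and the only nontrivial bookkeeping step, is verifying that the entries of the leading column of $W$ at $s=c_{ST}$ simplify to the quantities $i\zeta_1\nabla_{x_j}\varphi/|\nabla_{x'}\varphi|_g$ and $\zeta_2$ written in the theorem. This requires using $M_{21}=-iM_{12}^{\ast}=i(2(\mu_+-\mu_-)-(\tfrac{\rho_+ s^2}{1-a_+b_+}-\tfrac{\rho_- s^2}{1-a_-b_-}))|\xi'|_g$, $M_{11}=(\tfrac{b_+\rho_+ s^2}{1-a_+b_+}+\tfrac{b_-\rho_- s^2}{1-a_-b_-})|\xi'|_g$, together with the fact that $\widetilde{m}_1=0$ at $s=c_{ST}$, after which the normalization $k_1(c_{ST})$ defined in (\ref{def_k_s}) yields exactly the polarization vector quoted. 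The remaining verification that the symbol of $e_0^{-1}$ combined with $1/k_1$ produces the factor $1/(m_1'(c_{ST})k_1(c_{ST}))$ follows from (\ref{e0_S}) differentiated at $s=c_{ST}$, where $m_1(c_{ST})=0$ implies $e_0|_{\Sigma_{ST}}=m_1'(c_{ST})/(i\,c_{ST})$ up to the constant factor absorbed by the phase derivative $\partial_t\varphi=c_{ST}|\nabla_{x'}\varphi|_g$ along the characteristic.
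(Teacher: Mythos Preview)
Your proposal is correct and follows essentially the same route as the paper: the paper's proof is precisely the discussion preceding the theorem, which decouples (\ref{dn_s}) via $\widetilde{W}$ into (\ref{hyperbolic_eq_s}), inverts the two elliptic components, applies Proposition \ref{prop_solution_ih} to the hyperbolic one, and then reads off the leading term for $t\geq T$ exactly as in the Rayleigh case (\ref{larger_t}). One minor slip in your final paragraph: from (\ref{e0_S}) one has $e_0|_{\Sigma_{ST}}=m_1'(c_{ST})/i$ (no extra factor $c_{ST}$), so $\sigma_p(e_0^{-1})=i/m_1'(c_{ST})$, which combines directly with $1/k_1(c_{ST})$ to give the stated denominator.
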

\begin{remark}
    With $f^-$ given above and $f^+ = l + f^-$, we can construct the solution $u^-, u^+$ as discussed in Section \ref{u_construct}, which satisfy
    \begin{equation*}
    \begin{cases}
    \partial_t^2 u^{\pm} - Eu^\pm  = 0 \mod C^\infty & \mbox{in } \mathbb{R}_t \times \Omega_{\pm},\\
    [u] = l \mod C^\infty, \quad [N u] = q  \mod C^\infty&\mbox{on } \Gamma,\\
    u|_{t<0} = 0.
    \end{cases}
    \end{equation*}
    It remains to show that $u^-, u^+$ have the same microlocal behaviors as the true solution to (\ref{elastic_trans}).
\end{remark}


\begin{appendices}
\section{the principal symbol of $r$}\label{symbolr}
In this section, we explain how to compute the principal symbol of $r$, the lower order term in the hyperbolic equation in Section \ref{Rayleigh}.
It determines the transport equation satisfied by the amplitude that we constructed there.

By (\ref{def_lambda}) we have $r = e^{-1}_0 r_1$, where $r_1$ is the lower order term in the fully decoupled system. 
Recall the decoupling procedure in \cite{Taylor2017} and the diagonalization of $\Lambda$ in Section \ref{Rayleigh}. First we have
\[
W^{-1} \Lambda W = 
\begin{pmatrix} 
\widetilde{m}_1  & 0 & 0\\
0 & \widetilde{m}_2  & 0\\
0 & 0 & \widetilde{m}_3 
\end{pmatrix} 
+ R
\equiv G + R
\mod \Psi^{-\infty},
\]
where $R \in \Psi^0$ is the lower order term given by the asymptotic expansion of the product of $\Psi$DOs, see (\ref{Rij}).
We denote the entries of $R$ by $R_{ij}$, for $i,j = 1,2,3$ and will compute them in the following. 
One can see only the entry $R_{11}$ is involved in the principal symbol of $r$.
Then we decouple the system further by finding a $K^1$ in form of 
\[
K^1 =
\begin{pmatrix} 
0  & K_{12} & K_{13} \\
K_{21}  & 0  & K_{23} \\
K_{31}  & K_{32}  & 0
\end{pmatrix} 
\in \Psi^{-1}
\] 
such that
\[
(1+K^1)(G+R) (1+K^1)^{-1}= G + (K^1G - GK^1 + A) + \cdots
\]
and the off diagonal terms of $(K^1G - GK^1 + A)$ vanish. 
There exists a unique solution for $K^1$ since all eigenvalues are distinct.
After this step, the diagonal terms $R_{jj}, j = 1, 2,3$ in $R$ remains and they form the diagonal of the zero order terms of the decoupled system. 
We introduce the notations $ P^{(\alpha)} =  \partial_{(\tau,\xi)}^\alpha P$ and $ P_{(\alpha)} =  D_{(t,x)}^\alpha P$ for a matrix-valued symbol $P(t,x,\tau, \xi)$ in the following.
Then the principal symbol of $R$ are the second highest order term of the full symbol
\begin{align}\label{Rij}
\sigma(W^{*} \Lambda W) = \sigma(W^{*} (\Lambda W)) 
\sim \sum_{\alpha \geq 0} \frac{i^{|\alpha|-1}}{\alpha !}  \sigma(W^{-1})^{(\alpha)}  \sigma(\Lambda W)_{(\alpha)}\\
\sim \sum_{\alpha \geq 0} \frac{i^{|\alpha|-1}}{\alpha !}  \sigma(W^{-1})^{(\alpha)} 
(\sum_{\beta \geq 0} \sigma(\Lambda)^{(\beta)} \sigma( W)_{(\beta)})_{(\alpha)},
\end{align}
where the product of symbols are multiplication of matrices.
Observe that any terms that are multiples of $\widetilde{m}_1$ vanish along $\Sigma_R$.
It follows that
\begin{align*}
\sigma_p(R_{11}) &=  \sum_{|\alpha|=1} \sum_k  \sigma_p(W^{*})_{1k}^{(\alpha)} ((\sigma_p(\Lambda)\sigma_p(W))_{(\alpha)})_{k1} + \sigma_p(W^{*})_{1k} (\sigma_p(\Lambda )^{(\alpha)} \sigma_p(W)_{(\alpha)}))_{kj}\\
&+\sum_{k}
\sigma_p(W^{*})_{1k} (\sigma_{p-1}(\Lambda) \sigma_p(W))_{k1},
\end{align*}
where we use $\sigma_{p-1}$ to denote the symbol of second highest order. 
The third term depends on $\sigma_{p-1}(\Lambda)$, which can be computed from Appendix \ref{App_lambda}.

Additionally,
since we write $\widetilde{m}_1 \sim i e_0(\partial_t - i c_R(t,x')\sqrt{-\Delta_{x'}})$ in the principal symbol level, there is an extra term in $r_1$ besides $R_{11}$. Let $p = i(\tau - c_R(t,x')|\xi'|)$. Then we have
\[
\sigma(r_1) = \sigma_p(R_{11}) - i \sum_{|\alpha|=1} e_0^{(\alpha)} p_{(\alpha)},
\]
and therefore 
\[
\sigma_p(r) = \sigma_p(e_0^{-1}r_1) = \sigma_p(e_0)^{-1}(\sigma_p(R_{11}) - i\sum_{|\alpha|=1} e_0^{(\alpha)} p_{(\alpha)}).
\]

\section{the second highest oder term of $\sigma(\Lambda)$}\label{App_lambda}
In this section, we explain how to compute $\sigma_{p-1}(\Lambda)$, the second highest order term of the symbol $\sigma(\Lambda)$. 
We show that when the boundary is flat and the coefficients are constant (the flat case), we have $\sigma_{p-1}(\Lambda) = 0$.

First, in \cite{Stefanov2019} $w(t,x)$ is constructed from the its boundary value $w_b \equiv w|_{x^3=0}$ by the geometric optics construction with a complex phase function and an matrix-valued amplitude $a_w$. 
One can regard $w(t,x)$ as the result of applying the solution operator, an FIO with a complex phase, to $w_b$. 
In the flat case, especially we have the exact solution with the amplitude $a_w = I_3$, see (\ref{flatw}).

Then recall in \cite{Stefanov2019} we have
\begin{align*}
u = Uw, \quad h = Nu|_{x^3 =0} = M_{out} w_b = \Lambda f, \text{ with } f = u|_{x^3=0}= U_{out} w_b,
\end{align*}
where the Neumann operator $N$, $U$ are defined and computed there.
It follows that $\sigma_{p-1}(\Lambda)$ can be computed by 
\begin{align*}
\sigma_{p-1}(\Lambda)  &= \sigma_{p-1}(M_{out})\sigma_{p}(U_{out})^{-1} - \sigma_p(\Lambda)\sigma_{p-1}(U_{out}) \sigma_p(U_{out})^{-1} \\
&- (\sum_{|\alpha| = 1}  \sigma_{p}(\Lambda)^{(\alpha)}\sigma_{p}(U_{out})_{(\alpha)})\sigma_p(U_{out})^{-1},
\end{align*}
where $\sigma_{p-1}(M_{out}), \sigma_{p-1}(U_{out})$ can be computed by the asymptotic expansion of the composition of a $\Psi$DO with an FIO.
Indeed, by the Fundamental Lemma for the complex phase in \cite{Treves1980},
one can see this composition is actually a $\Psi$DO if restricted on the boundary $x_3 =0$.
More specifically, one can derive
\begin{align*}
\sigma_{p-1}(M_{out})
&= \sigma_{p-1}(NU)|_{\xi_3 = i\alpha} 
\begin{pmatrix}
I_2 & \\
& 0
\end{pmatrix}
+
\sigma_{p-1}(NU)|_{\xi_3 = i\beta} 
\begin{pmatrix}
0_2 & \\
& 1
\end{pmatrix}\\
& + 
\partial_{\xi_3} \sigma_p(NU)|_{\xi_3 = i\alpha} D_{x^3} a_w|_{x^3 = 0}
\begin{pmatrix}
I_2 & \\
& 0
\end{pmatrix}
+
\partial_{\xi_3} \sigma_p(NU)|_{\xi_3 = i\beta} D_{x^3} a_w|_{x^3 = 0}
\begin{pmatrix}
0_2 & \\
& 1
\end{pmatrix},
\end{align*}
where $a_w|_{x^3 = 0}$ can be derived from the geometric optics construction.
Similarly we have 
\begin{align*}
\sigma_{p-1}(U_{out}) = 
&= \sigma_{p-1}(U)|_{\xi_3 = i\alpha} 
\begin{pmatrix}
I_2 & \\
& 0
\end{pmatrix}
+
\sigma_{p-1}(U)|_{\xi_3 = i\beta} 
\begin{pmatrix}
0_2 & \\
& 1
\end{pmatrix}\\
& + 
\partial_{\xi_3} \sigma_p(U)|_{\xi_3 = i\alpha} D_{x^3} a_w|_{x^3 = 0}
\begin{pmatrix}
I_2 & \\
& 0
\end{pmatrix}
+
\partial_{\xi_3} \sigma_p(U)|_{\xi_3 = i\beta} D_{x^3} a_w|_{x^3 = 0}
\begin{pmatrix}
0_2 & \\
& 1
\end{pmatrix}.
\end{align*}
We note that $\sigma_{p-1}(U)$ and $\sigma_{p-1}(N)$ are involved in above computation. 
The latter can be found in \cite{Stefanov2019} and $\sigma_{p-1}(U)$ can be computed from the procedure of fully decoupling the operator $E$. 
Additionally, the second term in $\sigma_{p-1}(\Lambda)$ multiplied by $\sigma_p(W^*)$ on the left will vanish along $\Sigma_R$ in the first row, which implies it has no contribution to $\sigma_p(R_{11})$.

Notice in the flat case, we have $\sigma(N), \sigma(U)$ are homogeneous in $\tau, \xi$ of order $1$ and $a_w =I_3$. Therefore in this case $\sigma_{p-1}(\Lambda) = 0$.
\end{appendices}

\begin{footnotesize}
    \bibliographystyle{plain}
    \bibliography{microlocal_analysis}
\end{footnotesize}

\end{document}